\def\b{\boldsymbol}
\def\R{\mathbb{R}}
\def\p{\partial}
\def\T{\mathbb{T}}
\newcommand{\loc}{\text{loc}}
\newcommand{\pt}{\partial}
\newcommand{\eps}{\varepsilon}
\newcommand{\sij}{{ij}}
\newcommand{\ud}{\,\mathrm{d}}
\numberwithin{equation}{section}
\newtheorem{lemma}{Lemma}[section]
\newtheorem{theorem}{Theorem}
\newtheorem{proposition}{Proposition}
\newtheorem{definition}{Definition}
\newtheorem{remark}{Remark}
\begin{document}
\title[Existence and rigidity of vectorial dislocation model]{
Existence and rigidity of the vectorial Peierls-Nabarro model for dislocations in high dimensions}

\author[Y. Gao]{Yuan Gao}
\address{Department of Mathematics, Duke University, Durham, NC and Department of Mathematics, Purdue University, West Lafayette, IN}
\email{gao662@purdue.edu}

\author[J.-G. Liu]{Jian-Guo Liu}
\address{Department of Mathematics and Department of
  Physics, Duke University, Durham, NC}
\email{jliu@math.duke.edu}

\author[Z. Liu]{Zibu Liu}
\address{Department of Mathematics, Duke University, Durham, NC}
\email{zibu.liu@duke.edu}

\keywords{fractional Laplacian, De Giorgi conjecture, stable entire solution, anisotropic nonlocal operator, spectral analysis, energy rearrangement, Peierls-Nabarro model, plastic deformation}

\subjclass[2010]{35A02, 35Q74, 35S15, 35J50}

\maketitle
\begin{abstract}
We focus on the existence and rigidity problems of the vectorial Peierls-Nabarro (PN) model for dislocations.  Under the assumption that the misfit potential on the slip plane only depends on the shear displacement along the Burgers vector, a reduced non-local scalar Ginzburg-Landau equation with an anisotropic positive (if Poisson ratio belongs to $(-1/2,1/3)$) singular kernel is derived on the slip plane. We first prove that minimizers of the  PN energy for this reduced scalar problem exist. Starting from $H^{1/2}$ regularity, we prove that these minimizers are smooth 1D profiles only depending on the shear direction, monotonically and uniformly converge to two stable states at far fields in the direction of the Burgers vector. Then a De Giorgi-type conjecture of single-variable symmetry for both minimizers and layer solutions is established. As a direct corollary, minimizers and layer solutions are unique up to translations. The proof of this De Giorgi-type conjecture relies on a delicate spectral analysis which is especially powerful for nonlocal pseudo-differential operators with strong maximal principle. All these results hold in any dimension since we work on the domain periodic in the transverse directions of the slip plane. The physical interpretation of this rigidity result is that the equilibrium dislocation on the slip plane only admits shear displacements and is a strictly monotonic 1D profile provided exclusive dependence of the misfit potential on the shear displacement. 
\end{abstract}
\section{Introduction and main results}\label{sec:introduction}

In materials science, the Peierls-Nabarro (PN) model with Poisson ratio $\nu\in[-1,1/2]$ plays a fundamental role in describing dislocations or line defects in materials \cite{anderson2017theory,lu2005peierls}. Understanding this model provides insights on designing new materials with robust performance \cite{jiang2020stochastic,ghoussoub1998conjecture,blass2015dynamics,fonseca2020motion}. However, the existence and rigidity problem regarding the vector-field PN model has not been explored.

The PN model is a nonlinear model that studies the core structure of the dislocation by  incorporating the atomistic effect in the dislocation core into the continuum elastic model.
In the PN model in three dimensions, two half-spaces separated by the slip plane of a dislocation are assumed to be linear elastic continua. Here the slip plane is assumed to be a fixed plane $\Gamma=\{(x,y,z):\ y=0\}$, where the horizontal  displacement discontinuity (known as disregistry) happens.   Without loss of generality, we assume that the  Burgers vector is $\b b=(b,0,0)$ where $b>0$. 
The magnitude of the Burgers vector represents the typical length to observe a heavily distorted region in the dislocation core. Hence it is natural to rescale all the quantities including spatial variable $x,y,z$,  the displacement vector $\b u=(u_1, u_2, u_3)$ and $\b b$ with respect to the magnitude of the Burgers vector. After rescaling,  we regard all these quantities (with same notations) as dimensionless quantities and $b=1$.

In this paper, the shear direction is referred to as the direction of the Burgers vector, i.e. the $x$ direction; the vertical direction of the slip plane is referred to as the $y$ direction and the transverse direction in the slip plane is referred to as the $z$ direction.

The PN model is a minimization problem for the total energy $E$ which is given by
\begin{equation}\label{E2.2}
E(\b u):= E_\mathrm{els}(\b u)+E_\mathrm{mis}(\b u).%=\int_{\Gamma} W (u_1^+) \ud x +\frac{1}{2}\int_{\mathbb{R}^2\backslash  \Gamma}\sigma : \varepsilon \ud x\ud y  .
\end{equation}
Here $\b u=(u_1, u_2, u_3)$ is the displacement vector. 
\eqref{E2.2} incorporates not only the elastic energy in the bulk but also the atomistic effect in the dislocation core. The elastic energy  in the two half-spaces is defined as
\begin{align}
E_{\mathrm{els}}(\b u) = \int_{\mathbb{R}^3\backslash \Gamma} \frac12  \sigma :\varepsilon \ud x\ud y \ud z,
\end{align}
where 
$\sigma : \varepsilon=\sum_{i,j=1}^3 \sigma_{ij}\varepsilon_{ij}$. Here $\varepsilon$  and $\sigma$ are the strain tensor and the stress tensor respectively, defined as
\begin{equation}\label{strainandconstitutive}
\eps_\sij=\frac{1}{2}(\pt_j u_i+\pt_i u_j),\ \sigma_\sij=2G \eps_\sij+\frac{2\nu G}{1-2\nu} \sum_{k=1}^3\eps_{kk}\delta_{ij}, \quad i,j=1,2,3.
\end{equation}
Here $\nu\in[-1,1/2]$ is the Poisson ratio and $G$  is the shear modulus.

On the slip plane, we denote the upper limit and lower limit of the displacement as
\begin{align}
u_i^+(x,z)=u_i(x,0^+,z),\ u_i^-(x,z)=u_i(x,0^-,z),\ i=1,2,3.
\end{align} 
Moreover, we assume that $u_i,\ i=1,2,3$ are subject to the following boundary conditions at the slip plane:  
\begin{equation}\label{BC}
\begin{aligned}
u_1^+(x, z)= - u_1^-(x, z),\,\,\,
u_2^+(x, z)= u_2^-(x, z),\,\,\,
u_3^+(x, z)= - u_3^-(x, z).
\end{aligned}
\end{equation}
We call \eqref{BC} the symmetric assumption. Characterizing the nonlinear atomistic interactions, the misfit energy $E_\mathrm{mis}(\b u)$ is defined as the integral of the misfit potential $\gamma:\R^2\to\R$  on the slip plane:
\begin{equation}\label{Emis}
{E_\mathrm{mis}}(\b u):=\int_{\Gamma} \gamma(u_1^+-u_1^-, u_3^+-u_3^-) \ud x \ud z =\int_{\Gamma}\gamma (2u_1^+, 2u_3^+) \ud x \ud z.
\end{equation}
The last equality is due to the symmetric assumption \eqref{BC}. Notice, $\b u$ is already dimensionless quantity so $E_\mathrm{mis}$ is well-defined. For brevity, we will omit factor 2 in \eqref{Emis} before $u_1^+$ and $u_3^+$ which makes no difference in the conclusions.

In this paper, to characterize the key property imposed by the Burgers vector, i.e., the direction of the dislocation and existence of two stable states, we assume that $\gamma$ depends only on the shear displacement in the Burgers direction, i.e.,
\begin{align}\label{assum:1d}
\gamma(u_1,u_3)=\gamma(u_1),
\end{align}
where the misfit potential is a double-well type potential, i.e. $\gamma:\R\to\R$ is a $C^2$ function satisfying
 \begin{equation}\label{condition:potential}
\gamma(x)>0\ \mathrm{if}\ {-1<x<1}, \quad \gamma(\pm 1)=0,\ \ \gamma''(\pm 1)>0.
\end{equation}
We remark that this assumption on the misfit potential also includes some other typical periodic potentials which satisfy $\gamma(x+2)=\gamma(x)$ and represent the periodic lattice structure of crystalline materials; see $\gamma_0$ in an explicit example \eqref{eq:straightsolution}.

Because the magnitude of the rescaled Burgers vector is order $1$, for convenience, we take $u_1=\pm 1$ as bi-states at far fields, i.e.,
\begin{equation}\label{assm:farendlimit}
\lim\limits_{x_1\to\pm\infty}u_1(\b x)=\pm 1,
\end{equation}
  which equivalent to the magnitude of the dislocation is assumed to be $b=4$.

{In this bi-states case \eqref{assm:farendlimit}, the total energy $E$ in \eqref{E2.2} in the whole space is always infinite. Therefore, we consider the global minimizer in the following perturbed sense. However we remark that for a dislocation loop, i.e., a disregistry with compact support instead of the bi-states far field condition, energy $E$ in \eqref{E2.2} is finite.}

\begin{definition}\label{minimizer} A function $\b u: \mathbb{R}^3 \to \mathbb{R}^3$ {satisfying \eqref{assm:farendlimit}} is called a global minimizer of $E$ defined in \eqref{E2.2} if it satisfies 
	\begin{equation}
	E(\b u + \b \varphi)-E(\b u)\geq 0
	\end{equation}
	for any perturbation $ \bm\varphi=(\varphi_1,\varphi_2,\varphi_3)  \in C^\infty(\mathbb{R}^3\backslash \Gamma; \mathbb{R}^3)$ supported in some ball $B(R)\subset\R^3$ satisfying \eqref{BC}, i.e.
	\begin{equation}\label{bcphi}
	\varphi_1^+(x, z)= - \varphi_1^-(x, z),\ 
	\varphi_2^+(x, z)=  \varphi_2^-(x, z),\ 
	\varphi_3^+(x, z)= - \varphi_3^-(x, z).
	\end{equation} 
\end{definition}

The problem of existence and rigidity for the PN model interests us the most: 
\begin{enumerate}[(i)]
	\item Does the minimizer of total energy \eqref{E2.2} {in the sense of Definition \ref{minimizer}} exist? 
	\item Do minimizers {in (i)} and layer solutions {(see Definition \ref{def:layersolution})} have 1D symmetry on the slip plane, i.e. are only depending on the shear direction, but independent with the transverse direction? 
\end{enumerate}

The answers to these two questions are both positive. To provide explicit and complete answers to these two questions, we consider the resulting Euler-Lagrange equation satisfied by the minimizer, which is a Lam\'e system with nonlinear boundary conditions on the slip plane (see \eqref{eq:EL}). Because we assume \eqref{assum:1d}, i.e., the misfit potential $\gamma$ depends only on the shear displacement {$u_1$}, this Euler-Lagrange equation is reduced to a nonlocal semi-linear scalar equation on the slip plane with an elliptic pseudo-differential operator of order $1$ (see \eqref{eq:reduced1d}). In particular, when $\nu\in(-1/2,1/3)$, the pseudo-differential operator can be described in the singular kernel formulation; see Assumptions \eqref{A}-\eqref{D}.

After these simplifications and reformulation, we only need to focus on the existence and rigidity of this reduced scalar nonlocal equation (see \eqref{eq:generalized1d}). This equation is the Euler-Lagrange equation of a reduced energy function $F$ on the slip plane (see \eqref{def:energyfunctional}). We will first prove that minimizers of {energy functional $F$ (see \eqref{def:energyfunctional})} {in set \eqref{def:funcsetA}} exist by constructing a minimizing sequence {in which each function} is an $H^{1/2}$ perturbation of a given 1D profile; see Theorem \ref{thm:existence}. Although starting from this weak regularity, we finally prove that these minimizers are smooth 1D profiles that monotonically and uniformly converge to stable states of the misfit functional $\gamma$ in the shear direction, {i.e., they converge to $\pm 1$ as $x\to\pm\infty$}. 

After proving Theorem \ref{thm:existence}, we also establish a rigidity result of De Giorgi-type conjecture on 1D symmetry for all minimizers {in set \eqref{def:funcsetA}}, and more generally for all layer solutions {(see Definition \ref{def:layersolution})}. As a corollary, the uniqueness of {these} minimizers, as well as layer solutions, is also demonstrated; see Theorem \ref{thm:degiorgi} and Theorem \ref{thm:uniqueness}. The existence and rigidity results are also stated for the original vectorial PN model \eqref{eq:EL} in Theorem \ref{thm:PN}.  

Our results on both existence and rigidity hold in any dimension $d\geq 1$ due to the periodic assumption: we are interested in solutions that are periodic in $d-1$ transverse directions. This dimension-independent rigidity is also observed in other equations if the domain is armed with periodicity \cite{ignat2020giorgi}.

In terms of materials science, our results provide a compatible physical interpretation. For Poisson ratio $\nu\in(-1/2,1/3)$, if the misfit potential $\gamma$ depends only on the shear displacement, then the equilibrium dislocation profile only admits shear displacements on the slip plane. Furthermore, this uniquely (up to translations) determined shear displacement is a strictly monotonic 1D profile connecting two stable states. In view of this rigidity result, the vectorial PN model \eqref{eq:EL} in three dimensions is reduced to a two-dimensional problem which {was} thoroughly investigated in our previous work \cite{gao2019mathematical}.

In the remaining parts of the introduction, we will introduce the vectorial PN model and its reduced scalar equation {(see \eqref{eq:reduced1d})} in Section \ref{sec:PN}. From the reduced scalar equation, in Section \ref{sec:setup}, we introduce the nonlocal high-dimensional equation {(see \eqref{eq:generalized1d})} which contains the PN model as a special case. Finally, in this general context, we will present our main results and strategies in Section \ref{sec:mainresult} to provide a rigorous and complete answer to the main questions {(i) and (ii)}.

\subsection{The vectorial Peierls-Nabarro model and its reduced scalar equation}\label{sec:PN}
Denote the unit torus $\R/\mathbb{Z}$ as $\T$.
Instead of minimizing the total energy \eqref{E2.2} on $\R^3$, we consider the model on $\R^2\times\T$. Correspondingly, the slip plane $\Gamma$ is replaced by $\Gamma'$ {which is }defined as
\begin{align}\label{def:slipplane}
\Gamma'=\{(x,y,z)\in\R^2\times\T:\ y=0\}.
\end{align} 
A standard calculation of the first variation of the total energy \eqref{E2.2} derives the following Euler--Lagrange equation satisfied by minimizers of \eqref{E2.2} {in the sense of Definition \ref{minimizer}}. The proof of this lemma can be found in Appendix \ref{sec:proof} or our previous work \cite{gao2019mathematical}.
\begin{lemma}\label{Lem2.2}
Assume that
 $\b{u}\in C^2(\mathbb{R}^2\times\T\backslash \Gamma') $ is a  minimizer of the total energy $E$ in the sense of Definition \ref{minimizer} satisfying the boundary conditions \eqref{BC}. Then $\b u$ satisfies the Euler--Lagrange equation
\begin{align}\label{eq:EL}
\begin{cases}
\Delta\b{u}+\dfrac{1}{1-2\nu}\nabla(\nabla\cdot\b{u})=0,\ &\mathrm{in\ }\mathbb{R}^2\times\T\setminus\Gamma', \\
\sigma^{+}_{12}+\sigma^{-}_{12}=\dfrac{\p \gamma}{\p u_1}(u_1^+,u_3^+),\ &\mathrm{on\ }\Gamma',\\
\sigma^{+}_{22}=\sigma^{-}_{22},\ &\mathrm{on\ }\Gamma',\\
\sigma^{+}_{32}+\sigma^{-}_{32}=\dfrac{\p \gamma}{\p u_3}(u_1^+,u_3^+),\ &\mathrm{on\ }\Gamma'.\\
\end{cases}
\end{align}
\end{lemma}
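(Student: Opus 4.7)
The plan is to derive the Euler--Lagrange system by the standard first-variation argument, with care taken to handle the jump across $\Gamma'$ and the symmetry constraints \eqref{bcphi} that the admissible perturbations must satisfy. Since $E(\b u)$ itself is infinite under the far-field condition \eqref{assm:farendlimit}, I will work throughout with the finite quantity $E(\b u + \e\bm\varphi) - E(\b u)$ for perturbations compactly supported in some ball $B(R)$, so that all cancellations are legal.

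First I would compute $\tfrac{d}{d\e}\bigl|_{\e=0}E_{\mathrm{els}}(\b u+\e\bm\varphi)$. Using the symmetry of $\sigma$ in \eqref{strainandconstitutive} and linearity of the constitutive law, this first variation equals $\int_{\R^2\times\T\setminus\Gamma'}\sigma(\b u):\eps(\bm\varphi)\,dx\,dy\,dz$. I would then integrate by parts separately on the two open half-spaces $\{y>0\}$ and $\{y<0\}$, since $\b u$ is only $C^2$ off of $\Gamma'$. The volume contribution produces $-\int(\nabla\cdot\sigma)\cdot\bm\varphi$, and a short computation using \eqref{strainandconstitutive} gives $\nabla\cdot\sigma=G\Delta\b u+\tfrac{G}{1-2\nu}\nabla(\nabla\cdot\b u)$, which is (after dividing by $G$) precisely the bulk Lamé equation in \eqref{eq:EL}. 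The surface contributions come with outward normals $(0,-1,0)$ from the upper half and $(0,+1,0)$ from the lower half, producing a boundary integral of the form $\int_{\Gamma'}\bigl(-\sige^+\varphi_i^+ + \sige^-\varphi_i^-\bigr)\,dx\,dz$, summed over $i=1,2,3$.

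Next I would compute the variation of the misfit energy. Using \eqref{Emis} and \eqref{assum:1d}, this produces $\int_{\Gamma'}\gamma'(u_1^+)\varphi_1^+\,dx\,dz$ (with the factor of $2$ omitted per the convention stated after \eqref{Emis}); more generally, without assumption \eqref{assum:1d}, one picks up $\tfrac{\p\gamma}{\p u_1}(u_1^+,u_3^+)\varphi_1^+ + \tfrac{\p\gamma}{\p u_3}(u_1^+,u_3^+)\varphi_3^+$. Substituting the symmetry relations \eqref{bcphi}, i.e. $\varphi_1^-=-\varphi_1^+$, $\varphi_2^-=\varphi_2^+$, $\varphi_3^-=-\varphi_3^+$, collapses the elastic surface term into
\[
\int_{\Gamma'}\Bigl[-(\sigma_{12}^++\sigma_{12}^-)\varphi_1^+ + (\sigma_{22}^--\sigma_{22}^+)\varphi_2^+ - (\sigma_{32}^++\sigma_{32}^-)\varphi_3^+\Bigr]\,dx\,dz.
\]
Adding the misfit contribution and setting the total first variation equal to zero for every admissible $\bm\varphi$ gives three independent identities (one for each component $\varphi_i^+$ on $\Gamma'$), which are exactly the three boundary conditions in \eqref{eq:EL}.

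The main technical point to verify carefully is not the algebra but the reduction to first-order optimality. Because $E(\b u)$ is not finite, I have to check that $\tfrac{d}{d\e}\bigl|_{\e=0}\bigl[E(\b u+\e\bm\varphi)-E(\b u)\bigr]$ is well-defined and non-negative in sign when extended from $\e>0$ to $\e<0$ (so it must equal zero), using that $\bm\varphi$ has compact support in $B(R)$ and that the integrands in $E_{\mathrm{els}}$ and $E_{\mathrm{mis}}$ differ from those of $\b u$ only on a compact set; this makes the difference an absolutely convergent integral with $C^2$ integrand in $\e$. After this justification, the only remaining step is an arbitrariness argument: since each component $\varphi_i^+$ on $\Gamma'$ can be prescribed independently as the trace of a smooth compactly supported field satisfying \eqref{bcphi} (which is elementary to construct by choosing $\bm\varphi$ antisymmetric or symmetric in $y$ as dictated), the fundamental lemma of the calculus of variations yields each of the three jump conditions on $\Gamma'$.
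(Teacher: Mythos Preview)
Your proposal is correct and follows essentially the same approach as the paper: compute the first variation of $E(\b u+\e\bm\varphi)-E(\b u)$, integrate by parts separately on $\{y>0\}$ and $\{y<0\}$, collapse the boundary terms using the symmetry constraints \eqref{bcphi}, and conclude by arbitrariness of $\varphi_i^+$ on $\Gamma'$. The paper's proof is slightly terser (it does not spell out the justification that the energy difference is finite and $C^1$ in $\e$, nor the explicit identification $\nabla\cdot\sigma=G\Delta\b u+\tfrac{G}{1-2\nu}\nabla(\nabla\cdot\b u)$), but the logical skeleton is identical.
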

\begin{remark}
We can view \eqref{eq:EL}, especially the second and the fourth equations as an incorporation of the linear response theory. Moreover, they are coupled equations of $u_1$ and $u_3$. Notice that taking trace in \eqref{strainandconstitutive}, $\sigma_{12}=G(\p_1u_2+\p_2u_1)$ on $\Gamma'$. (i) Regard the elastic bulks $\mathbb{R}^3\backslash \Gamma'$ as an environment and the slip plane $\Gamma$ as an open system. (ii) Given a Dirichlet  disregistry boundary condition $u^+_1, u^+_3$, by solving $\nabla\cdot\sigma=0$ in the environment, one can obtain the trace $\sigma_{12}, \sigma_{32}$ on $\Gamma'$. (iii) We call this operator $(u^+_1, u^+_3)\mapsto (\sigma_{12}^\pm, \sigma_{32}^\pm)$ the Dirichlet to Neumann map; also known as a nonlocal linear response operator. As a consequence, this enables us to consider a nonlocal semi-linear elliptic system on $\Gamma'$; see Section  \ref{sec:preliminary} on the kernel representation of   this Dirichlet to Neumann map.
\end{remark}
For the special case {in \eqref{assum:1d}, where $\gamma$ only depends on the shear displacement $u_1^+$}, we can simplify and decouple the system \eqref{eq:EL} into two independent equations and finally drive a reduced scalar equation of $u_1^+$, {i.e. \eqref{eq:reduced1d}}. In details, one can  employ the Dirichlet to Neumann map and the elastic extension introduced in \cite{gao2019mathematical} to reduce the problem from $\mathbb{R}^2\times\T$ onto $\Gamma'$, i.e., to  {equations} of $(u_1^+,u_3^+)$ on the split plane $\Gamma'$. Second, if further employing {\eqref{assum:1d}}, one can derive a linear representation formula between $u_1^+$ and $u_3^+$ on the Fourier side, i.e.,
\begin{align}\label{eq:Fouriersymbol}
	\hat{u}_3^+(\b k)=-\dfrac{\nu k_1k_2}{(1-\nu)k_1^2+k_2^2}\hat{u}_1^+(\b k).
\end{align}
Here $\hat{u}_i^+(k_1,k_2),i=1,3$ are the Fourier transform of $u_i^+, i=1,3$ with frequency vector $\b{k}=(k_1,k_2), k_1\in\R, k_2\in 2\pi\mathbb{Z}$. Substituting {\eqref{eq:Fouriersymbol}} into system \eqref{eq:EL}, an independent equation of $u_1^+$ is derived, which contains a pseudo-differential operator $\mathcal{L}$ defined on $H^1(\R\times\T)$: 
\begin{equation}\label{eq:reduced1d}
	\begin{aligned}
	&\mathcal{L}u_1^+(x,z)+\dfrac{\gamma'(u_1^+(x,z))}{2G}=0,\quad \widehat{\mathcal{L}u_1}(\b k)=\dfrac{|k|^3\hat{u}_1(\b k)}{(1-\nu)k_1^2+k_2^2}.
	\end{aligned}
\end{equation}

The derivation of \eqref{eq:reduced1d} is standard and can be found in Section \ref{sec:preliminary}. Therefore, as long as we can solve the non-local semi-linear equation of $u_1^+$, {i.e. the first equation} in \eqref{eq:reduced1d}, we can also find $u_3^+$ by {\eqref{eq:Fouriersymbol}}, and then derive the solution of the original system \eqref{eq:EL}. For brevity, we will omit the superscript '$+$' in the following sections. We call \eqref{eq:reduced1d} the reduced scalar equation. 

To solve \eqref{eq:reduced1d}, a meaningful observation is that we can write down an explicit solution to it for certain double-well potential $\gamma$'s. Highly compatible with dislocations in Halite, the cosine potential $\gamma_0=\dfrac{1}{\pi^2}(\cos(\pi u)+1)$ in the PN model implements an explicit solution \cite{anderson2017theory,frenkel1926theorie} to \eqref{eq:reduced1d} and \eqref{eq:Fouriersymbol}:
\begin{align}\label{eq:straightsolution}
u_1(x,z)=\dfrac{2}{\pi}\arctan\left(\dfrac{(1-\nu)x}{2G}\right), \quad u_3(x,z)=0.
\end{align}
In particular, $u_1$ in \eqref{eq:straightsolution} is a layer solution (see Definition \ref{def:layersolution}) since it is strictly monotonic in $x$ direction and satisfies assumption {\eqref{assm:farendlimit}}.
In fact, \eqref{eq:straightsolution} is a good candidate for minimizers of total energy \eqref{E2.2} {in the sense of Definition \ref{minimizer}}. We will prove that this solution is the unique minimizer up to translations which concludes the question on existence and rigidity. {We remark here that this is just a concrete example of our general result: for general double-well type potentials, we prove that minimizers of the total energy $F$ (see \eqref{def:energyfunctional}) in function set \eqref{def:funcsetA} exist and they are layer solutions (see Definition \ref{def:layersolution}). Moreover, they are unique up to translations.} 

\subsection{Unsolved problems on the vectorial PN model} \label{sec:unsolved}
The existence and rigidity of the vectorial PN model are important in understanding dislocations. Previous literature on the vectorial PN model mainly focused on numerical simulations \cite{xiang2008generalized,zimmerman2000generalized,lu2000generalized,schoeck1999peierls} and physical experiments \cite{zimmerman2000generalized,stobbs1971weak}, while only few rigorous mathematical results \cite{gao2019mathematical} were derived. In this section, we aim at mathematically formulating those important but unsolved problems into a framework and embedding our result into this macroscopic framework.

Consider the Euler-Lagrange equation of the vectorial model. We first observed that the second and fourth equations of \eqref{eq:EL} can be rewritten as
\begin{equation} \label{eq:coupled}
\mathcal{A}
	\begin{pmatrix}
	u_1^+\\ u_3^+
	\end{pmatrix}=\dfrac{1}{2G}
	\begin{pmatrix}
	\dfrac{\p \gamma}{\p u_1^+}\\
	\dfrac{\p \gamma}{\p u_3^+}
	\end{pmatrix}, \quad  \hat{\mathcal{A}}=\begin{pmatrix}
\dfrac{k_2^2}{\|k\|}+\dfrac{1}{1-\nu}\cdot\dfrac{k_1^2}{\|k\|} & \dfrac{\nu}{1-\nu}\cdot\dfrac{k_1k_2}{\|k\|}\\
\dfrac{\nu}{1-\nu}\cdot\dfrac{k_1k_2}{\|k\|} & \dfrac{k_1^2}{\|k\|}+\dfrac{1}{1-\nu}\cdot\dfrac{k_2^2}{\|k\|}
\end{pmatrix},
\end{equation}
where $\mathcal{A}$ is a pseudo-differential operator with Fourier symbol $\hat{\mathcal{A}}$. Equation \eqref{eq:coupled} is a nonlocal reduced elliptic equations on {slip plane} $\Gamma$, {which is an open system}. 
Meanwhile, the misfit potential here may depend on both $u_1$ and $u_3$: $\gamma=\gamma(u_1,u_3)$. As far as we know, neither existence nor rigidity of \eqref{eq:coupled} was studied by previous literatures.

In fact, if the misfit potential is carefully selected \cite{zimmerman2000generalized,lu2000generalized,schoeck1999peierls,anderson2017theory}, solutions of \eqref{eq:coupled} determine the underlying structure of dislocations in crystals such as Cu ($\nu=0.36$) and Al ($\nu=0.33$), no matter straight ones or curved ones. Considering symmetry of the crystal lattice, authors of \cite{xiang2008generalized} adopted a truncated Fourier expansion for the generalized stacking fault energy (see eq.(14) in \cite{xiang2008generalized}) as the misfit energy $\gamma(u_1, u_3)$, with different coefficients for Cu and Al.

Numerical simulations in \cite{xiang2008generalized} indicated that straight edge dislocations in both Cu (figure 4 in \cite{xiang2008generalized}) and Al (figure 5 in \cite{xiang2008generalized}) possess the following structure: the displacement in the shear direction (i.e. $u_1$) is a layer solution (see Definition \ref{def:layersolutionwhole}) and the displacement in the transverse direction (i.e. $u_3$) is a solitary wave. As claimed by authors in \cite{xiang2008generalized}, this numerical result agrees with data from experiments on real materials \cite{stobbs1971weak}.

To understand this consistency between the numerical result and the experimental data, we consider \eqref{eq:coupled} where the Poisson ratio $\nu =0$, which is exactly the case for cork. This special case is less obscure since equations for $u_1$ and $u_3$ are decoupled now. Furthermore, we assume that the misfit potential consists of two parts which depend merely on $u_1$ and $u_3$ respectively:
	\begin{align} \label{condition:1+3}
	\gamma(u_1,u_3)=\gamma_1(u_1)+\gamma_3(u_3).
	\end{align}
	Suppose that $\gamma_1$ is a double-well potential (see \eqref{condition:potential})
	and $\gamma_3$ is the nonlinear potential in the Benjamin-Ono equation \cite{benjamin1967internal}, i.e.
	\begin{align} \label{condition:BJO}
	\gamma_3(u_3)=\dfrac{u_3^2}{2}-\dfrac{u_3^3}{3}.
	\end{align}
	If $\b{u}$ is a minimizer of \eqref{E2.2} with boundary conditions 
	\begin{align}\label{lim_u3}
	\lim\limits_{x\to\pm\infty}u_1(x,y,z)=\pm 1,\ \lim\limits_{x\to\pm\infty}u_3(x,y,z)=0,
	\end{align}
	then separately, $u_1$ and $u_3$ satisfy
	\begin{align} \label{eq:help}
	2G\cdot(-\Delta)^{1/2}u_i^++\gamma_i'(u_i^+)=0,\ i=1,3.
	\end{align}
	For $i=1$, because $\gamma_1$ is a double-well potential, \cite{cabre2005layer} proved that \eqref{eq:help} admits layer solutions $u_1$ (unique up to translations) in the sense of Definition \ref{def:layersolutionwhole}. For $i=3$, \eqref{eq:help} is the  traveling wave form of the Benjamin-Ono equation which admits
	\begin{align}
	u_3^+(x,z) = \dfrac{4G}{4G^2+x^2}
	\end{align} 
	as a solitary solution \cite{benjamin1967internal}. These solutions also satisfy the boundary condition \eqref{lim_u3}. These special solutions partially explain the structure of minimizers observed in \cite{xiang2008generalized,stobbs1971weak}, i.e., it is a layer solution in the shear direction while it is a solitary wave in the transverse direction.

As far as we know, a complete answer on the rigidity of minimizers is still unknown even  for the case $\nu=0$. More explicitly, does the De Giorgi conjecture hold in this case? Does \eqref{eq:coupled} (or \eqref{eq:help}) admit any other solution? Existing evidence indicates negative results. If $(-\Delta)^{1/2}$ is replaced by $-\Delta$ in \eqref{eq:help} and we take $i=3$, the author of \cite{dancer2001new} proved that there exist solutions being a soliton in the $x$ direction while being periodic (but non-constant) in the $z$ direction. Thus, the one-dimensional symmetry (or the De Giorgi conjecture) fails in this case. This evidence strongly indicates the existence of high-dimensional solutions of $u_3$ in \eqref{eq:help}. High dimensionality physically indicates the existence of curved dislocations but the construction of a counterexample for the De Giorgi conjecture in the nonlocal case is still open.

For general cases where the Poisson ratio $\nu$ is non-zero, neither existence nor rigidity result is proved to our best knowledge. In particular, no matter $u_3$ has a one-dimensional profile or not, no conclusion can be drawn on the rigidity of $u_1$.

Another question regarding the De Giorgi conjecture is also of great interest: for what misfit potential $\gamma$, there exist one-dimensional solutions for \eqref{eq:coupled}? For what misfit potential $\gamma$, the De Giorgi conjecture holds, i.e. all solutions of  \eqref{eq:coupled} are one dimensional? No previous study has ever considered these problems as far as we know.
	
In summary, the rigidity and existence of the vectorial PN model is an important problem that is central to studies of dislocations, both straight and curved dislocations. Our contribution to this macroscopic framework is that, under the assumption $\gamma=\gamma(u_1)$, i.e. $\gamma$ only depends on $u_1$, a complete answer to existence and rigidity is justified even for high dimensions. See the following sections in the introduction.

\subsection{The nonlocal scalar equation in high dimensions}\label{sec:setup}
We remind our audience here that we will focus on the case where $\gamma$ only depends on $u_1$ and $\gamma$ is a double-well {type} potential {(see \eqref{condition:potential})} in the following sections.

We extend the discussion to any dimension $d\geq 1$ and clarify the set up. Denote $\Omega_d:=\R\times\T^{d-1}$, consider the high-dimensional reduced scalar equation in $\Omega_d$:
\begin{align}\label{eq:generalized1d}
\mathcal{L}u(\b w)+\gamma'(u(\b w))=0,\ \b w\in\Omega_d.
\end{align}
The potential function $\gamma\in C^{2}(\R)$ is a double-well potential that satisfies \eqref{condition:potential}. The linear operator $\mathcal{L}$ is a convolution-type singular integral operator \cite{stein1970singular} which is defined as
\begin{align}\label{def:convolutionL}
(\mathcal{L}u)(\b{w}):=\mathrm{P.V.}\int_{\Omega_d}(u(\b{w})-u(\b{w}'))K(\b{w}-\b{w}')\mathrm{d}\b{w}'
\end{align} 
whose convolution  kernel $K(\b{w})$ can be written as
\begin{align}\label{eq:kernel}
K(x,\b{y})=\sum_{\b{j}\in\mathbb{Z}^{d-1}}H(x,\b{y}+\b{j})
\end{align}
where $\b{w}=(x,\b{y}),\  x\in\R,\ \b{y}\in\T^{d-1}$.

We impose several assumptions on the operator $\mathcal{L}$ and its kernel $H$. To clarify these assumptions, we first  introduce the Fourier transform on $\Omega_d$ and the Sobolev spaces $H^s(\Omega_d)$. 

Denote $\Omega_d'=\R\times(2\pi\mathbb{Z})^{d-1}$. The Fourier transform on $\Omega_d$ is understood as a composition in two directions: the Fourier transform on $\R$ in the $x$ direction and the Fourier series expansion on $\T^{d-1}$ in the $\b{y}$ direction. Denote $\b{\nu}=(\xi,\b{k})$ where $\xi\in\R$ and $\b{k}\in(2\pi\mathbb{Z})^{d-1}$, then the Fourier transform of $u(\b{w})$, denoted as $\hat{u}(\b{\nu})$, is defined as 
\begin{align*}
\hat{u}(\b{\nu})=\int_{\Omega_d}e^{-2\pi i(x\xi+\b{y}\cdot\b{k})}u(x,\b{y})\mathrm{d}x\mathrm{d}\b{y}.
\end{align*} 
Thus the Fourier transform on $\Omega_d$ maps functions
defined on $\Omega_d$ into functions defined on $\Omega_d'$. For any $s>0$, we define Sobolev spaces $H^s(\Omega_d)$ in the classical way on the Fourier side:
\begin{align}\label{def:Sobolevspaces}
	H^s(\Omega_d):=\{u\in L^2(\Omega_d):\ |\b{\nu}|^s\hat{u}(\b{\nu})\in L^2(\Omega_d')\}.
\end{align}
$H^s(\Omega_d),\ s>0$ are Hilbert spaces with inner product
\begin{align}
	\langle u,v\rangle_{H^s(\Omega_d)}:=\langle \hat{u},\hat{v}\rangle_{L^2(\Omega_d')}+\langle |\b{\nu}|^{s}\hat{u},|\b{\nu}|^{s}\hat{v}\rangle_{L^2(\Omega_d')}.
\end{align}
Denote the norm induced by this inner product as $\|\cdot\|_{H^s(\Omega_d)}$. We also define the homogeneous norm $\|\cdot\|_{\dot{H}^s(\Omega_d)}$ as
\begin{align}\label{def:homogenuousnorm}
	\|u\|_{\dot{H}^s(\Omega_d)}:=\||\b{\nu}|^s\hat{u}\|_{L^2(\Omega_d')}.
\end{align}

Now we are ready to impose assumptions of $\mathcal{L}$ {in \eqref{def:convolutionL}} and introduce several important properties of it. We assume that:
\begin{enumerate}[(A)]\label{assumption}
	\item\label{A} (symbol of order 1) The Fourier symbol of $\mathcal{L}$ is positive with same order as $|\b{\nu}|$, i.e. for any $\b{\nu}\in\Omega_d'=\R\times(2\pi\mathbb{Z})^{d-1}$, there exist positive constants $c$ and $C$ such that
	\begin{align}\label{condition:L}
	\widehat{\mathcal{L}u}(\b{\nu})=\sigma_{\mathcal{L}}(\b{\nu})\hat{u}(\b{\nu}),\ c|\b{\nu}|\leq\sigma_{\mathcal{L}}(\b{\nu})\leq C|\b{\nu}|
	\end{align}
	\item\label{B} (positivity and continuity) $H(\b{z}):\mathbb{R}^d\to\R$ is positive and continuous on $\R^d\setminus\{\b{0}\}$.
	\item\label{C} (homogeneity) For any $\b{z}\neq \b{0}$ and $a>0$,
	\begin{align}\label{condition:G1}
	\ H(a\b{z})&=a^{-d-1}H(\b{z}).
	\end{align}
	\item\label{D} (symmetry) For any $\b{z}\in\R^d$, $H(\b{z})=H(-\b{z})$.
\end{enumerate}

The assumptions we impose on $\mathcal{L}$ and its kernel {$K$} include two important cases. First, in dimension $d=2$, the non-local operator in equation \eqref{eq:reduced1d} derived from the PN model is included if the Poisson ratio $\nu\in(-1/2,1/3)$. In this case, the operator $\mathcal{L}$ has Fourier symbol $|\b k|^3/((1-\nu)k_1^2+k_2^2)$ (see \eqref{eq:reduced1d}) which is of the same order as $|k|$, so Assumption \eqref{A} is satisfied. Moreover, the authors of \cite{dong2020existence} proved that {$K$} satisfies assumption \eqref{B}, \eqref{C} and \eqref{D} if and only if $\nu\in(-1/2,1/3)$. So equation \eqref{eq:reduced1d} is included in this context if $\nu\in(-1/2,1/3)$. Second, in arbitrary dimensions, {if we take $\nu=0$}, then $\mathcal{L}=(-\Delta)^{1/2}$ defined on $\Omega_d$ is also included. In this case, the Fourier symbol is exactly $|k|$ and according to \cite{kwasnicki2017ten}, there exists a constant $C_d>0$ such that 
\begin{align}
H(z)=\dfrac{C_d}{|\b{z}|^{d+1}}.
\end{align}
So all assumptions are satisfied. 

We remark here that we adopt two different but equivalent definitions for $\mathcal{L}$: one is as a Fourier multiplier and the other is as a singular convolution. The result that these two definitions for fractional Laplacian $(-\Delta)^{\alpha}, \alpha\in(0,1)$ are equivalent is thoroughly investigated in \cite{kwasnicki2017ten}. For equation \eqref{eq:reduced1d}, the equivalence of these two definitions is also well-studied in \cite{dong2020existence}. So in later sections, we will switch between these two definitions for the sake of convenience.

\subsection{Main results and strategies}\label{sec:mainresult}
Before presenting the main results, we introduce the fractional Allen-Cahn equation \cite{allen1972ground}: 
    \begin{align}\label{eq:halflaplacian}
	(-\Delta)^{1/2}u(\b x)+\gamma'(u(\b x))=0,\ \b x\in\R^d.
	\end{align}
Here the double-well {type} potential $\gamma\in C^2(\R)$ {satisfying \eqref{condition:potential}} is exactly the misfit potential in the PN model.
Taking $\nu=0$ in \eqref{eq:reduced1d}, we see that \eqref{eq:halflaplacian} is a special case of \eqref{eq:generalized1d} and \eqref{eq:reduced1d}. \eqref{eq:halflaplacian} has already been thoroughly investigated in the literatures \cite{cabre2005layer,palatucci2013local,sire2009fractional,savin2018rigidity,savin2019rigidity,figalli2020stable}. In particular, the well-posedness result of \eqref{eq:halflaplacian} is completely developed. A long standing conjecture named after De Giorgi \cite{De1978Convergence,ghoussoub1998conjecture} {(which originally discussed the local case, i.e., one replaces $(-\Delta)^{1/2}$ by $-\Delta$ in \eqref{eq:halflaplacian}, but then generalized to the non-local case \eqref{eq:halflaplacian})} is proved for dimension $d\leq 8$. The De Giorgi conjecture claims that any layer solution (see below) to \eqref{eq:halflaplacian} is a simple 1D profile for dimensions $d\leq 8$. {In the classical Allen-Cahn equation,} this conjecture is optimal in the sense that a counterexample in dimension $d=9$ is constructed \cite{del2011giorgi}. 
	The layer solution in the De Giorgi conjecture is defined as:
	\begin{definition}\label{def:layersolutionwhole}
		$u:\R^d\to\R$ is a layer solution to equation \eqref{eq:halflaplacian} if
		\begin{align}\label{condition:layerwhole}
		\dfrac{\p u(\b x)}{\p x_1}>0,\ \lim\limits_{x_1\to\pm\infty}u(\b x)=\pm 1.
		\end{align}
\end{definition} 
The layer solution is also of main interest in the PN model since it models a dislocation {profile} that monotonically converges to two stable states {at far field} in the shear direction.

Now we are ready to articulate our existence and rigidity result on the PN model and \eqref{eq:generalized1d}. Although the PN model and \eqref{eq:generalized1d} share the common double-well nonlinearity and non-localness with the fractional Landau-Ginzburg equation (see \eqref{eq:halflaplacian}), the main difference between our setting and previous work on \eqref{eq:halflaplacian} is that we work on a partially periodic domain $\Omega_d$ and Hilbert spaces $H^s(\Omega_d)$ while previous work focused on the whole domain $\R^d$ and Banach spaces $C^{k,\alpha}(\R^d)$. This discrepancy in the setting urges us to develop more appropriate methods while referring to some valuable techniques introduced in previous work.

For the existence problem, although we know that \eqref{eq:straightsolution} is a solution to \eqref{eq:reduced1d} and \eqref{eq:Fouriersymbol}, we are still not aware of whether a minimizer of the total energy \eqref{E2.2} {in function set \eqref{def:funcsetA}} exists. In \cite{cabre2005layer}, authors worked on H\"older spaces $C^{k,\alpha}(\R^d),\ k=0,1,2,$ and derived some Schauder's estimates of the weak solution to \eqref{eq:halflaplacian}. Based on these estimates, they proved the existence of the classical solutions to \eqref{eq:halflaplacian} for $d=1$ by considering the harmonic extension of \eqref{eq:halflaplacian} on the upper half-plane. In \cite{palatucci2013local}, the authors adopted the direct method in calculus of variations minimizing the total energy on a subset of $L^1_{\mathrm{loc}}(\R)$, i.e.
\begin{align}\label{eq:savin}
	\mathcal{X}:=\{f\in L^1_{\mathrm{loc}}(\R):\  \lim\limits_{x\to\pm\infty}f(x)=\pm 1\}.
\end{align}
They proved the existence and uniqueness of the minimizer in one dimension and {the existence result is generalized to any dimension $d$}. 

For the high-dimensional equation \eqref{eq:generalized1d}, we will follow the idea of \cite{palatucci2013local} by using the direct method in the calculus of variations to prove that the minimizer of a functional $F(u)$ exists. However, instead of requiring the far field assumption \eqref{assm:farendlimit}, we only consider $H^{1/2}$ perturbation of a given 1D profile $\eta$ who satisfies  \eqref{assm:farendlimit}:
\begin{align}\label{def:funcsetA}
\mathcal{A}:=\{u\in H^{1/2}_{\mathrm{loc}}(\Omega_d): u-\eta\in H^{1/2}(\Omega_d)\}.
\end{align}
Here $\eta(x,\b{y})$ is a smooth 1D profile, i.e. $\eta(x,\b{y})=\eta(x)$ for any $(x,\b{y})\in\Omega_d$, satisfying 
\begin{equation}\label{eq:1dprofile}
\begin{aligned}
\eta(x)&\in C^{\infty}(\R), \quad
\eta(x) = \left\{  \begin{array}{cc}
1 \quad \mathrm{if}\quad x\in[1,+\infty),\\
-1 \quad \mathrm{if}\quad x\in(-\infty,-1].
\end{array}  \right.
\end{aligned}
\end{equation}
We will abuse the notation $\eta$ to represent the profile defined on either $\Omega_{d}$ or $\R$. We remark here that the weak $H^{1/2}$ regularity does not ensure any far field limit behavior in any dimension, even in dimension $d=1$. 

The functional $F$ that we aim to minimize is the perturbed version of the total energy \eqref{E2.2}:
\begin{equation}\label{def:energyfunctional}
\begin{aligned}
F(u)&:=\dfrac{1}{2}\int_{\Omega_d}\int_{\Omega_d}{|(u(x,\b{y})-u(x',\b{y}')|^2}K(x-x',\b{y}-\b{y}')\\
&-{|(\eta(x,\b{y})-\eta(x',\b{y}')|^2}K(x-x',\b{y}-\b{y}')\mathrm{d}x\mathrm{d}\b{y}\mathrm{d}x'\mathrm{d}\b{y'}+\int_{\Omega_d}\gamma(u(x,\b y))\mathrm{d}x\mathrm{d}\b{y}.
\end{aligned}
\end{equation}
Here $\gamma\in C^2(\R)$ is the double-well {type} potential considered in \eqref{eq:generalized1d} that satisfies \eqref{condition:potential}. 

Starting from functions only with $H^{1/2}$ regularity and even not necessarily satisfying the far field limit condition, we construct a minimizer with $H^2$ regularity (in fact smooth) that also satisfies the desired rigidity result that we aim to prove: it is a layer solution with 1D symmetry.   
\begin{theorem}(Existence of the minimizer)\label{thm:existence}
	Suppose that $\gamma\in C^2(\R)$ is a double-well {type} potential satisfying condition \eqref{condition:potential}. Consider set $\mathcal{A}$ defined in \eqref{def:funcsetA} and energy functional $F$ defined in \eqref{def:energyfunctional}. Then: 
	\begin{enumerate}[(i)]
		\item (existence) There exists $u^*\in\mathcal{A}$ such that $F(u^*)=\min\limits_{u\in\mathcal{A}}F(u)$. In particular, $u^*$ is a weak solution to \eqref{eq:generalized1d}, i.e.
		\begin{align*}
		\mathcal{L}u^*+\gamma'(u^*)=0.
		\end{align*}
		Here $\mathcal{L}$ is defined as in \eqref{def:convolutionL}.  
		\item (regularity) $u^*$ in $(i)$ satisfies $u^*-\eta\in H^2(\Omega_d)$. In particular, $u^*$ solves equation \eqref{eq:generalized1d} in $L^2$ sense and satisfies the far end limit condition uniformly in $\b{y}$: 
		\begin{align*}
		\lim\limits_{x\to\pm\infty}u^*(x,\b{y})=\pm 1
		\end{align*}
		\item (monotonicity) $u^*$ in $(i)$ satisfies $\dfrac{\p u^*(x,\b{y})}{\p x}>0$ for any $(x,\b{y})\in\Omega_d$, i.e. $u^*(x,\b{y})$ is strictly increasing in $x$ direction.
		\item (symmetry) $u^*$ in $(i)$ satisfies $ \nabla_{\b{y}}u^*(x,\b{y})=\b{0}$ for any $(x,\b{y})\in\Omega_d$, i.e. $u^*(x,\b{y})=u^*(x)$ is a 1D profile. 
	\end{enumerate}
\end{theorem}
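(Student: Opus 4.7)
The plan is: (i) by the direct method of the calculus of variations applied to $F$ on $\mathcal{A}$; (ii) by a standard elliptic bootstrap from the Euler--Lagrange equation; (iii)--(iv) by a two-step nonlocal rearrangement/symmetrization adapted to the anisotropic kernel $K$ satisfying (B)--(D).

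\textbf{Existence and Euler--Lagrange.} First I would take a minimizing sequence $\{u_n\}\subset\mathcal{A}$ and reduce to $|u_n|\leq 1$ by truncation into $[-1,1]$; this does not increase $F$ because $\gamma(\pm 1)=0$, $\eta\in[-1,1]$, and truncation is $1$-Lipschitz. Writing $u_n=\eta+v_n$ with $v_n\in H^{1/2}(\Omega_d)$ and expanding the quadratic part of $F$, one obtains
\[
F(u_n)=\tfrac12 B(v_n,v_n)+B(v_n,\eta)+\int_{\Omega_d}\gamma(\eta+v_n)\,\mathrm{d}\b w,
\]
where $B(f,g):=\iint(f(w)-f(w'))(g(w)-g(w'))K(w-w')\,\mathrm{d}w\,\mathrm{d}w'$. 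By assumption (A), $B(v,v)$ is equivalent to $\|v\|_{\dot H^{1/2}(\Omega_d)}^2$; since $\eta$ is $\b y$-independent and smooth with $\eta'\in C_c^\infty(\R)$, a direct Fourier computation gives $B(\eta,\eta)<\infty$ and also $\mathcal L\eta\in \dot H^{-1/2}$, so Young's inequality yields a uniform $\dot H^{1/2}$-bound on $v_n$. Passing to a subsequence, $v_n\rightharpoonup v^*$ weakly in $\dot H^{1/2}$ and strongly in $L^2_{\loc}$ with a.e.\ convergence. Weak lower semicontinuity of the convex form $B$, weak continuity of the linear cross term $v\mapsto B(v,\eta)$ on $\dot H^{1/2}$, and Fatou's lemma applied to $\gamma\geq 0$ give $F(u^*)\leq\liminf F(u_n)$, so $u^*:=\eta+v^*\in\mathcal{A}$ is a minimizer; perturbing by arbitrary $\varphi\in C_c^\infty(\Omega_d)$ yields the weak Euler--Lagrange equation $\mathcal{L}u^*+\gamma'(u^*)=0$.

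\textbf{Regularity bootstrap.} For (ii), since $|u^*|\leq 1$, $\gamma'(\pm 1)=0$, and $\gamma'(\eta)$ has compact support in $x$ and is $\b y$-independent (hence $\gamma'(\eta)\in L^2(\Omega_d)$), the Lipschitz bound $|\gamma'(u^*)-\gamma'(\eta)|\leq C|v^*|$ places $\gamma'(u^*)\in L^2(\Omega_d)$. Combined with $\mathcal{L}\eta\in L^2(\Omega_d)$, this gives $\mathcal{L}v^*\in L^2$, and the order-$1$ ellipticity $\sigma_{\mathcal L}(\b\nu)\gtrsim|\b\nu|$ lifts $v^*$ to $\dot H^1\cap L^2=H^1(\Omega_d)$. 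Differentiating the equation once, the chain rule gives $\nabla\gamma'(u^*)=\gamma''(u^*)\nabla u^*\in L^2$, so $\gamma'(u^*)\in H^1$ and another application of elliptic regularity promotes $v^*$ to $H^2(\Omega_d)$. Sobolev embedding on $\R\times\T^{d-1}$ (together with iterating the bootstrap if $d$ is large) then yields $v^*(x,\b y)\to 0$ uniformly in $\b y$ as $|x|\to\infty$, so $u^*(x,\b y)\to\pm 1$ uniformly.

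\textbf{Monotonicity and 1D symmetry, and main obstacle.} For (iii)--(iv) the plan is a two-step nonlocal rearrangement. First, restrict the variational problem to the closed convex subset $\mathcal{A}_{1D}\subset\mathcal{A}$ of $\b y$-independent profiles; the same direct-method argument produces a 1D minimizer $\tilde u\in\mathcal{A}_{1D}$ of a reduced 1D functional whose kernel is $\tilde K(x):=\int_{\T^{d-1}} K(x,\b y)\,\mathrm{d}\b y$, and classical monotone rearrangement in $x$ plus the strong maximum principle on the linear equation satisfied by $\partial_x\tilde u$ forces $\tilde u$ to be strictly increasing in $x$. The second step is to show $F(\tilde u)=\inf_{\mathcal{A}}F$: one takes the minimizer $u^*$ from the existence step, performs a Steiner-type monotone rearrangement in $x$ slicewise in $\b y$ (which does not increase $B$ under (B)--(D) and preserves $\int\gamma$), and then uses the $\b y$-translation invariance of $F$ together with strict convexity of the off-diagonal part of $B$ to force $\b y$-independence, identifying the result with $\tilde u$. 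The hard part will be the anisotropic rearrangement inequality: unlike the isotropic fractional Laplacian where Riesz/Polya--Szego arguments apply verbatim, the kernel $K$ here is genuinely anisotropic (markedly so when $\nu\neq 0$ in the PN reduction), so a \emph{directional} Polya--Szego-type inequality for $x$-rearrangement must be extracted from just the positivity, homogeneity, and symmetry in (B)--(D); once this is in place, strict monotonicity and strict $\b y$-independence reduce to applications of the strong maximum principle for $\mathcal{L}$ and of strict convexity, respectively.
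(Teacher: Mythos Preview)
Your existence argument has a genuine gap: you cannot run the direct method on $\mathcal{A}$ without first breaking the translation invariance. The functional $F$ is invariant under $x$-translations (Lemma~\ref{lmm:translationinvariant}), so a minimizing sequence can slide off to infinity; e.g.\ if $u^*$ is any minimizer then $u_n(x,\b y):=u^*(x-n,\b y)$ is minimizing but $v_n=u_n-\eta$ satisfies $\|v_n\|_{L^2}\to\infty$ and $v_n\to -1-\eta\notin L^2$ a.e. Your Young-type bound only yields a $\dot H^{1/2}$-seminorm bound on $v_n$ (and even this step is shaky: $B(\eta,\eta)=c\iint_{\R^2}|\eta(x)-\eta(x')|^2|x-x'|^{-2}\,\mathrm{d}x\,\mathrm{d}x'=\infty$ because of the logarithmic divergence from the far-field transition, so neither ``$B(\eta,\eta)<\infty$'' nor ``$\mathcal{L}\eta\in\dot H^{-1/2}$'' holds---only $\mathcal{L}\eta\in L^2$ is true). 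With no $L^2$ control you cannot conclude $v^*\in H^{1/2}$, so the limit need not lie in $\mathcal{A}$, and your regularity step (which uses $|\gamma'(u^*)-\gamma'(\eta)|\le C|v^*|$ to put $\gamma'(u^*)$ in $L^2$) inherits the same gap.

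The paper's remedy is to reverse the order: it first proves monotonicity and $\b y$-independence for minimizers on \emph{bounded} strips $\mathcal{A}_I$ (Proposition~\ref{prop:finiteexistence}, where compactness is trivial), then restricts to the set $\mathcal{B}$ of monotone $1$D profiles normalized by $u(0)=0$. On $\mathcal{B}$ the double-well structure forces $\int\gamma(u)\ge C\|u-\eta\|_{L^2}^2-C'$ (Lemma~\ref{lmm:l2bound}), which supplies exactly the missing $L^2$ bound; only then does the direct method close. Symmetry is thus used \emph{to obtain} compactness, not deduced afterward. For the symmetry step itself, the paper does not use Steiner/Polya--Szeg\H{o} rearrangement at all (whose anisotropic version you correctly flag as the hard part of your plan); instead it uses a much more elementary device: compare a candidate $u$ with its translate $u(\cdot+\tau,\cdot+\b\tau_2)$, take $m=\min(u,u_\tau)$ and $M=\max(u,u_\tau)$, and use the pointwise identity $ab+AB\ge a_1b_1+a_2b_2$ (Lemma~\ref{lmm:rearrangeinequality}) together with positivity of $K$ to get $F(m)+F(M)\le F(u)+F(u_\tau)$. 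This needs only Assumption~(\ref{B}) and completely sidesteps the anisotropy issue.
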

The critical technique in the proof is the energy decreasing rearrangement method in \cite{palatucci2013local}. This method relies on the rearrangement inequality (see Lemma \ref{lmm:rearrangeinequality}) whose proof is quite elementary. However, driven by this basic inequality, the energy decreasing method is powerful in proving monotonicity and 1D symmetry. We will introduce this method in Section \ref{sec:rearrangement}.

For the rigidity problem, it worth mentioning the De Giorgi conjecture on the fractional Ginzburg-Landau equation \eqref{eq:halflaplacian}. It claims that at least for $d\leq 8$, layer solutions to \eqref{eq:halflaplacian} are in fact just 1D profiles. Here a layer solution is defined in Definition \ref{def:layersolution}. This conjecture was proved for dimension $d=2$ in \cite{cabre2005layer} and finally completely proved by Savin in his series of work \cite{savin2018rigidity,savin2019rigidity,palatucci2013local}. We also mention  the asymptotic analysis for the sharp interface limit of the fractional  diffusion-reaction equation with isotropic/anisotropic nonlocal kernel of order $\frac{1}{r^{d+2s}}, s\in(0,1)$ in   \cite{ABS_1998, Garroni_Muller_2005, Imbert_Souganidis_2009, Millot_Sire_Wang_2019} . 

The common approach to prove the De Giorgi conjecture is to develop a Liouville-type theorem and then {apply} the theorem on ratios of partial derivatives in different directions $u_{x_i}/u_{x_1}, i=1,2,...,d-1$. Then one can conclude that there exist constants $c_i, i=1,2,...,d-1$ such that
\begin{align*}
	u_{x_i}=c_iu_{x_1}, \quad  i=1,2,..,d-1.
\end{align*}
So $u$ in fact only depends on $x_1$ variable and hence is a 1D profile. For \eqref{eq:generalized1d}, we can prove the following theorem which is true for any dimension $d\geq 1$, not only for dimension $d\leq 8$:

\begin{theorem}\label{thm:degiorgi}(De Giorgi Conjecture)
	For any dimension $d\geq 1$, suppose that $u:\Omega_d\to\R$ satisfies $u-\eta\in H^1(\Omega_d)$ and is a layer solution (defined in Definition \ref{def:layersolution}) to equation \eqref{eq:generalized1d}, i.e.
	\begin{align*}
	\mathcal{L}u+\gamma'(u)=0.
	\end{align*}
	Here $\gamma\in C^{\infty}(\R)$ is a double-well {type} potential satisfies \eqref{condition:potential} and $\mathcal{L}$ is defined in \eqref{def:convolutionL} satisfying Assumptions \eqref{A}-\eqref{D}.
	Then $u(x,\b{y})$ only depends on $x$ variable, i.e. there exists $\phi:\R\to\R$ such that $u(x,\b{y})=\phi(x)$.
\end{theorem}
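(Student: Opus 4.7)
The plan is to follow the classical linearization strategy for De~Giorgi-type rigidity (as developed by Berestycki--Caffarelli--Nirenberg and Ghoussoub--Gui in the local case, and by Cabr\'e--Sire in the fractional case), adapted to the nonlocal partially-periodic setting via spectral theory on $L^2(\Omega_d)$. The idea is to differentiate the equation, observe that $v_1:=\partial_x u$ is a positive $L^2$ solution of the linearized equation, deduce by a Perron--Frobenius type argument that $0$ is a simple principal eigenvalue of the linearized operator, and finally use periodicity in $\b y$ to force each $\partial_{y_i}u$ to vanish.

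\textbf{Linearization and $L^2$-membership.} Because $\mathcal{L}$ commutes with translations in every coordinate, differentiating $\mathcal{L}u+\gamma'(u)=0$ in $x$ and in each $y_i$ (after an elliptic-regularity upgrade for $u$) yields that $v_1:=\partial_x u$ and $v_i:=\partial_{y_i}u$ for $i\geq 2$ all satisfy the linearized equation
$$Lv:=\mathcal{L}v+\gamma''(u)v=0.$$
Since $u-\eta\in H^1(\Omega_d)$, the function $\eta$ depends only on $x$, and $\eta'$ has compact support on $\R$ (so $\eta'\in L^2(\Omega_d)$ via the finite measure of $\T^{d-1}$), every $v_j$, including $v_1$, lies in $L^2(\Omega_d)$.

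\textbf{Spectral analysis (the main obstacle).} By Assumption \eqref{A}, $\mathcal{L}$ is a nonnegative self-adjoint operator with form domain $H^{1/2}(\Omega_d)$, so $L=\mathcal{L}+\gamma''(u)$ is self-adjoint and bounded below. The essential spectrum of $L$ is pushed strictly above $0$: because $u(x,\b y)\to\pm 1$ uniformly as $x\to\pm\infty$, we have $\gamma''(u)\to\gamma''(\pm 1)>0$ uniformly at infinity, so at infinity $L$ behaves like $\mathcal{L}+m$ with $m:=\min\{\gamma''(-1),\gamma''(+1)\}>0$, and a Weyl-sequence argument gives $\inf\sigma_{\mathrm{ess}}(L)\geq m$. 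Because $v_1>0$ is an $L^2$ eigenfunction of $L$ at eigenvalue $0$, an Allegretto--Piepenbrink argument then pins $\inf\sigma(L)=0$ as an isolated eigenvalue below the essential spectrum. Finally, the strong maximum principle for $\mathcal{L}$ (guaranteed by the positivity, homogeneity and symmetry of the kernel in Assumptions \eqref{B}--\eqref{D}) combined with a Perron--Frobenius argument forces the bottom eigenvalue to be simple: the $0$-eigenspace is one-dimensional, spanned by $v_1$. This is the technical core of the theorem and is the ``delicate spectral analysis'' advertised in the abstract; the hard point is making the Perron--Frobenius step rigorous for the nonlocal operator on the unbounded (though partially periodic) domain $\Omega_d$.

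\textbf{Conclusion via periodicity.} For each $i\geq 2$, $v_i\in L^2(\Omega_d)$ satisfies $Lv_i=0$, hence by simplicity of the $0$-eigenvalue there is a constant $c_i\in\R$ with $v_i=c_i v_1$. Integrating in the periodic variable $y_i$,
$$0=\int_{0}^{1}\partial_{y_i}u\,\mathrm{d}y_i = c_i\int_{0}^{1}v_1\,\mathrm{d}y_i,$$
and the right-hand integral is strictly positive pointwise in the remaining variables since $v_1>0$. Therefore $c_i=0$ for every $i\geq 2$, so $\partial_{y_i}u\equiv 0$ on $\Omega_d$, and $u$ depends only on $x$.
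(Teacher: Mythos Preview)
Your overall architecture matches the paper's: linearize, prove $\mathrm{Ker}(L)$ is one-dimensional and spanned by $u_x$, then use periodicity to kill the coefficients $c_i$. Where you differ is in how each of the two nontrivial steps is executed.

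For the simplicity of the $0$-eigenspace you invoke an abstract Perron--Frobenius/Allegretto--Piepenbrink package and then concede that ``making the Perron--Frobenius step rigorous'' is the hard point you have not carried out. The paper instead gives a completely elementary sliding argument: for an arbitrary $g\in\mathrm{Ker}(L)$ it considers $\phi_\beta=u_x+\beta g$, defines $\overline\beta=\sup\{\beta<0:\phi_\beta<0\text{ somewhere}\}$, uses the strong maximum principle for $\mathcal L$ (positivity of the kernel) to localize the touching point in a compact region via $\gamma''(u(\xi_\beta))<0$, passes to the limit $\beta\to\overline\beta$, and concludes $\phi_{\overline\beta}\equiv 0$ by the maximum principle again. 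This hands-on argument is precisely what is needed to cash out your Perron--Frobenius claim in this nonlocal, unbounded-in-$x$ setting; the paper does also develop the essential-spectrum and nonnegativity picture you sketch, but in a separate section (Section~5), and the De~Giorgi proof itself does not rely on it.

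For the final step $c_i=0$, your integration-over-the-torus argument
\[
0=\int_0^1\partial_{y_i}u\,\mathrm dy_i=c_i\int_0^1 u_x\,\mathrm dy_i>0\quad\text{if }c_i\neq 0
\]
is correct and is in fact cleaner than the paper's argument, which instead uses $u(x,\b y)=u(x+c_in,\b y-n\b e_i)=u(x+c_in,\b y)\to\pm 1$ to derive a contradiction.
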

{We emphasize here that the main reason of Theorem \ref{thm:degiorgi} being true in any dimension $d\geq 1$ instead of only dimensions less than eight is that we fully employed the compactness of the torus $\T^{d-1}$. The compactness ensures convergence of a sequence which is a key step in our proof (see the proof of Theorem \ref{thm:degiorgi}). As we explained in Section \ref{sec:unsolved}, without periodicity, the De Giorgi conjecture may fail in the classical case \cite{dancer2001new}. Therefore, domain $\Omega^d=\R\times\T^{d-1}$ is critical to our result which is also physically meaningful since it incorporates the periodicity in materials.}

Instead of using Liouville type theorems, we {prove} Theorem \ref{thm:degiorgi} by analyzing the spectrum of a linear operator. This method sufficiently respects the maximal property of operator $\mathcal{L}$ (see Lemma \ref{lmm:max}) which is realized by the positivity assumption (Assumption \eqref{B}). Utilized in our previous work \cite{gao2019long}, this spectral analysis method is straightforward and appropriate in our setting since the working spaces are selected as Hilbert spaces $H^s(\Omega_d)$ instead of Banach spaces $C^{k,\alpha}(\R^d)$ in \cite{cabre2005layer}. Under this setting, we can use the perturbation theory of self-adjoint operators on Hilbert spaces \cite{kato2013perturbation}.

Specifically speaking, suppose that $u$ is a given solution to \eqref{eq:generalized1d} that satisfies conditions in Theorem \ref{thm:degiorgi}. Differentiating on both sides of equation \eqref{eq:generalized1d}, we see that $u_x$ and $u_{y_i}, i=1,2,...,d-1$ are solutions to the following non-local linear elliptic equation of $\phi$ on $\Omega_d$ which is given by:
\begin{align*}
\left[\mathcal{L}+\gamma''(u)\right]\phi=0.
\end{align*}    
Equivalently, $u_x$ and $u_{y_i},i=1,2,...,d-1$ are eigenfunctions of eigenvalue 0 for the linear operator linearized along profile $u$: 
\begin{align}\label{def:operatorL}
L:H^1(\Omega_d)\subset L^2(\Omega_d)\to L^2(\Omega_d),\ L\phi=\mathcal{L}\phi+\gamma''(u)\phi.
\end{align} 
Therefore, as long as we can prove that 0 is a simple eigenvalue of $L$, i.e. the eigenspace of 0 is only 1 dimension, then we prove that $u_x$ and $u_{y_i},i=1,2,...,d-1$ are linearly dependent, which indicates 1D symmetry. This is the main idea and approach we will utilize to prove Theorem \ref{thm:degiorgi}.

As a direct corollary of Theorem \ref{thm:degiorgi}, we can prove that both layer solutions and minimizers of $F$ on $\mathcal{A}$ are unique up to {translations}. Define $\mathcal{A}_\ell$ and $\mathcal{A}_m$ as
\begin{equation}\label{def:minimizersandlayersolutions}
\begin{aligned}
\mathcal{A}_\ell&:=\{u\in\dot{H}^1(\Omega_d): u-\eta\in H^1(\Omega_d),\ u\ \mathrm{is\ a \ layer\ solution\ to}\ \eqref{eq:generalized1d}\},\\
\mathcal{A}_m&:=\{u\in\mathcal{A}: F(u)=\min\limits_{v\in\mathcal{A}} F(v)\},
\end{aligned}
\end{equation}
i.e. $\mathcal{A}_\ell$ is the set of layer solutions to \eqref{eq:generalized1d} with $H^1$ regularity and $\mathcal{A}_m$ is the set of minimizers of $F$ on set $\mathcal{A}$. Then we can prove the following theorem:
\begin{theorem}\label{thm:uniqueness}
(uniqueness of minimizers and layer solutions) For any dimension $d\geq 1$, suppose that $\gamma\in C^{\infty}(\R)$ is a double-well {type} potential satisfying \eqref{condition:potential}. Consider functional energy $F$ in \eqref{def:energyfunctional}, set $\mathcal{A}$ in \eqref{def:funcsetA}, set $\mathcal{A}_\ell$ and set $\mathcal{A}_m$ in \eqref{def:minimizersandlayersolutions}.
Then 
\begin{align}\label{eq:uniqueness}
\mathcal{A}_m=\mathcal{A}_\ell=\{u:\ u(x,\b{y})=u^*(x+x_0)\ \mathrm{for\ some}\ x_0\in\R\}.
\end{align}
Here $u^*(x)$ is the unique solution to equation 
\begin{align}\label{eq:essential1d}
c_{\mathcal{L}}(-\p_{xx})^{1/2}u^*+\gamma'(u^*)=0,\ u^*(0)=0.
\end{align}
Here $c_{\mathcal{L}}$ is the constant in Lemma \ref{lmm:propertyofL2}.
\end{theorem}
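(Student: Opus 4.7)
The plan is to combine the existence theorem, the De Giorgi-type rigidity, and a one-dimensional uniqueness argument to identify both $\mathcal{A}_m$ and $\mathcal{A}_\ell$ with the $x$-translation orbit of a single normalized profile. First, Theorem \ref{thm:existence}(ii)--(iii) shows that every $u\in\mathcal{A}_m$ is strictly increasing in $x$, converges uniformly to $\pm 1$ at $x=\pm\infty$, and satisfies $u-\eta\in H^2(\Omega_d)\subset H^1(\Omega_d)$, so $u\in\mathcal{A}_\ell$ and $\mathcal{A}_m\subseteq\mathcal{A}_\ell$. By Theorem \ref{thm:degiorgi}, every $u\in\mathcal{A}_\ell$ has the form $u(x,\b y)=\phi(x)$. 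Since the Fourier transform $\hat u$ is then supported on the fiber $\{\b k=\b 0\}$, Lemma \ref{lmm:propertyofL2} identifies the action of $\mathcal{L}$ on such profiles as $c_{\mathcal{L}}(-\p_{xx})^{1/2}$, so $\phi$ is a strictly increasing solution to $c_{\mathcal{L}}(-\p_{xx})^{1/2}\phi+\gamma'(\phi)=0$ on $\R$ with $\phi(\pm\infty)=\pm 1$. Continuity and strict monotonicity give a unique $x_0$ with $\phi(-x_0)=0$, so $\phi(\cdot-x_0)$ satisfies the normalized equation \eqref{eq:essential1d}.

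The crux is to show that \eqref{eq:essential1d} admits at most one strictly increasing solution valued in $[-1,1]$ with limits $\pm 1$ and the normalization $u^*(0)=0$; I would execute a nonlocal sliding argument. Given two such candidates $\phi_1,\phi_2$, set $\psi_t(x):=\phi_1(x+t)-\phi_2(x)$. The uniform far-field limits together with the bound $-1\le\phi_i\le 1$ (which follows from the maximum principle and the double-well structure of $\gamma$) imply $\psi_t\ge 0$ on $\R$ for sufficiently large $t$, whereas $\psi_t\not\ge 0$ for sufficiently negative $t$, so $t^*:=\inf\{t:\psi_t\ge 0 \text{ on } \R\}$ is finite and $\psi_{t^*}\ge 0$ by continuity. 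A compactness argument -- translating along a minimizing sequence $x_n$ at which $\psi_{t^*}$ approaches zero and extracting a smooth limit by elliptic regularity for $\mathcal{L}$ -- produces an interior touching point for a limit pair $(\widetilde\phi_1,\widetilde\phi_2)$ of solutions. The difference $\psi:=\widetilde\phi_1-\widetilde\phi_2$ then satisfies the linear nonlocal equation
\begin{equation*}
c_{\mathcal{L}}(-\p_{xx})^{1/2}\psi+V(x)\psi=0,\qquad V(x):=\int_0^1\gamma''\bigl(s\widetilde\phi_1(x)+(1-s)\widetilde\phi_2(x)\bigr)\ud s,
\end{equation*}
and the nonlocal strong maximum principle in Lemma \ref{lmm:max}, available because Assumption \eqref{B} guarantees a positive integral kernel, forces $\psi\equiv 0$ from a single interior zero. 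Unraveling the translations yields $\phi_1(\cdot+t^*)\equiv\phi_2$, and the common normalization $\phi_1(0)=\phi_2(0)=0$ combined with strict monotonicity forces $t^*=0$, hence $\phi_1\equiv\phi_2$.

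Finally, Theorem \ref{thm:existence} provides some $u^*\in\mathcal{A}_m$; after a single translation I may assume $u^*(0)=0$, and the first paragraph places $u^*\in\mathcal{A}_\ell$. The functional $F$ in \eqref{def:energyfunctional} is invariant under $x$-translations of its argument, since the change of variables $x\mapsto x+x_0$ leaves both the double integral of $|u(\b w)-u(\b w')|^2K(\b w-\b w')$ and $\int\gamma(u)\ud\b w$ unchanged, while the subtracted $\eta$-term does not involve $u$. Hence every translate $u^*(\cdot+x_0)$ remains a minimizer and a layer solution, yielding $\{u^*(\cdot+x_0):x_0\in\R\}\subseteq\mathcal{A}_m\cap\mathcal{A}_\ell$; combined with the sliding-based uniqueness in the second paragraph and the De Giorgi reduction in the first, this proves the full identification \eqref{eq:uniqueness}. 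The principal obstacle I expect is the rigorous production of an interior touching point in the nonlocal sliding step, since $\psi_{t^*}$ inevitably decays to zero at $\pm\infty$; the resolution must exploit the uniform far-field convergence from Theorem \ref{thm:existence}(ii) together with the positivity and homogeneity of the kernel in Assumptions \eqref{B}--\eqref{C} to ensure that the limit profile extracted from a minimizing translation sequence is nondegenerate, i.e., not identically $\pm 1$.
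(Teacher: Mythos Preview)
Your first step has a genuine gap: Theorem \ref{thm:existence} constructs \emph{one} minimizer $u^*$ and then proves (ii)--(iv) for that specific $u^*$; the proof of those items relies on $u^*$ arising as the a.e.\ limit of a minimizing sequence drawn from the special set $\mathcal{B}$ of monotone 1D profiles, so the conclusions do not transfer automatically to an arbitrary element of $\mathcal{A}_m$. To obtain $\mathcal{A}_m\subseteq\mathcal{A}_\ell$ you must show that \emph{every} minimizer is a layer solution. The paper does this in Lemma \ref{lmm:minimizersarelayersolutions}: given any $u\in\mathcal{A}_m$ and any $\tau>0$, both $u$ and its translate $u(\cdot+\tau,\cdot)$ are minimizers by translation invariance, so the energy-decreasing rearrangement (Lemma \ref{lmm:energydecreasing}) applied to this pair is forced to be an equality, which yields $u(x+\tau,\b y)\ge u(x,\b y)$; regularity (Lemma \ref{lmm:regularity}) plus the strong maximum principle (Lemma \ref{lmm:max}) then upgrade this to strict monotonicity and the uniform far-field limits. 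Without this rearrangement step applied to a general minimizer, your inclusion $\mathcal{A}_m\subseteq\mathcal{A}_\ell$ is unjustified.

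Your translation-invariance argument for $F$ is also incorrect as written: you split the double integral into the $u$-piece and the $\eta$-piece and translate only the former, but each of $\int\!\!\int|u(\b w)-u(\b w')|^2K$ and $\int\!\!\int|\eta(\b w)-\eta(\b w')|^2K$ is individually $+\infty$ (neither $u$ nor $\eta$ lies in $\dot H^{1/2}(\Omega_d)$, since $\widehat{\eta'}(0)=2\ne0$ forces a logarithmic divergence), so the splitting is illegitimate. The paper's Lemma \ref{lmm:translationinvariant} handles this by changing variables in the full $L^1$ integrand and then computing the residual pure-$\eta$ term explicitly to show it vanishes. Finally, your sliding argument for the one-dimensional uniqueness is a legitimate alternative to the paper's route, which simply cites Theorem 1.2 of \cite{cabre2005layer}; you have correctly identified the only delicate point (extracting a nondegenerate interior touching point despite the decay of $\psi_{t^*}$ at infinity), and the paper avoids that issue entirely by outsourcing it.
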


Theorem \ref{thm:uniqueness} provides a compatible physical interpretation of the PN model in three dimensions (with periodicity in the transverse direction): if we assume exclusive dependence of the misfit potential on the shear displacement, then the equilibrium dislocation on the slip plane only admits shear displacements. Furthermore, this uniquely (up to translations) determined shear displacement is a strictly monotonic 1D profile connecting two stable states. This reduces the vectorial PN model to the two-dimensional PN model which was  investigated in our previous work \cite{gao2019mathematical}. In summary, we have the following theorem:
\begin{theorem}\label{thm:PN}
	Suppose that $\gamma\in C^{\infty}(\R)$ is a double-well {type} potential satisfying \eqref{condition:potential}. Consider the functional energy $E$ in \eqref{E2.2} integrating on $\R^2\times\T$, i.e.
	\begin{align}\label{energy}
		\tilde{E}(\b u)=\int_{\R^2\times\T\setminus\Gamma'}\dfrac{1}{2}\sigma:\varepsilon\mathrm{d}x\mathrm{d}y\mathrm{d}z+\int_{\Gamma'}\gamma(u_1^+)\mathrm{d}x\mathrm{d}z.
	\end{align}
	Here $\b u=(u_1,u_2,u_3)$ is the displacement vector, $\sigma$ and $\varepsilon$ are the strain tensor and the stress tensor respectively given by \eqref{strainandconstitutive} and $\Gamma'$ is the slip plane defined in \eqref{def:slipplane}. Assume $\nu\in(-1/2,1/3)$. Suppose that $\b u$ is a global minimizer of $\tilde{E}$ {as in Definition \ref{minimizer}}, then: 
	\begin{enumerate}[(i)]
		\item (regularity) The displacement vector $\b u$ is smooth in $\R^2\times\T\setminus{\Gamma'}$.
		\item (rigidity) The displacement in transverse direction is 0, i.e. $u_3=0$ in $\R^2\times\T$; $u_1^+$ is the unique (up to translation in $x$ direction) 1D profile independent with $z$ variable, strictly monotonic in $x$ direction satisfying
		\begin{align}
			\lim\limits_{x\to\pm\infty}u^+_1(x)=\pm 1.
		\end{align}
		\item (Fourier representation) $u_1$ and $u_2$ only depend on $x$ and $y$ in $\R^2\times\T$. On the Fourier side, $u_1$ and $u_2$ can be uniquely represented by $u_1^{\pm}(x)$:
		\begin{align}
			\hat{u}_1^{\pm}(\xi,y)&=\hat{u}_1^{\pm}(\xi)\left(1-\dfrac{|\xi y|}{2-2\nu}\right)e^{-|\xi y|}\\
			\hat{u}_2^{\pm}(\xi,y)&=-\dfrac{\hat{u}_1^{\pm}(\xi)}{2-2\nu}\left((1-2\nu)\dfrac{i\xi}{|\xi|}+i\xi|y|\right)e^{-|\xi y|}.
		\end{align} 
		\item (Dirichlet to Neumann map) On $\Gamma'$, the stress tensor can be expressed as
		\begin{align}
			\sigma^+_{12}(x)&=\sigma^-_{12}(x)=-\dfrac{G}{(1-\nu)\pi}\mathrm{P.V.}\int_{\R}\dfrac{(u^{+}_1)'(s)}{x-s}\mathrm{d}s\\
			\sigma^+_{22}(x)&=\sigma^-_{22}=0.
		\end{align}
		\item The stress tensor is divergence free, i.e.
		\begin{align}
			\nabla\cdot\sigma=\b 0,\ \mathrm{holds}\ \mathrm{ in}\ D'(\R^2\times\T).
		\end{align}
		This also holds point-wisely in $\R^2\times\T\setminus\Gamma'$.
	\end{enumerate}
\end{theorem}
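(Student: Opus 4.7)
The plan is to reduce Theorem \ref{thm:PN} to the scalar results (Theorems \ref{thm:existence}, \ref{thm:degiorgi} and \ref{thm:uniqueness}) via the elastic reduction of Section \ref{sec:PN}. Given a global minimizer $\b u$ of $\tilde E$ in the sense of Definition \ref{minimizer}, Lemma \ref{Lem2.2} yields the Euler--Lagrange system \eqref{eq:EL}. Solving the Lam\'e equation in each half-space with trace data $(u_1^+, u_3^+)$ and using the interface matching produces the coupled Dirichlet-to-Neumann system \eqref{eq:coupled}. Under the assumption $\gamma=\gamma(u_1)$, the second row of \eqref{eq:coupled} is homogeneous in $u_3^+$ and inverts on the Fourier side into the pointwise relation \eqref{eq:Fouriersymbol}; substituting back gives the scalar equation \eqref{eq:reduced1d} for $u_1^+$. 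Since $\nu\in(-1/2,1/3)$, the resulting operator $\mathcal L$ satisfies Assumptions \eqref{A}--\eqref{D} by \cite{dong2020existence}, placing us in the scalar framework of Section \ref{sec:mainresult}.

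\textbf{From bulk minimizer to scalar minimizer and proof of (ii).} The crucial step is identifying $u_1^+$ as a minimizer of $F$ in \eqref{def:energyfunctional} over the class $\mathcal A$ in \eqref{def:funcsetA}. Minimizing $E_{\mathrm{els}}$ among bulk fields with prescribed disregistry yields the unique Lam\'e-harmonic extension whose bulk energy is a nonlocal quadratic form on the traces; after eliminating $u_3^+$ via \eqref{eq:Fouriersymbol}, this form has exactly the Fourier symbol generating the kernel $K$ in \eqref{def:convolutionL}. Thus, up to a fixed reference $\tilde E(\b u_\eta)$ obtained by extending the 1D profile $\eta$ into the bulk, we have $\tilde E(\b u)-\tilde E(\b u_\eta) = F(u_1^+)$. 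Minimality of $\b u$ forces $u_1^+\in \mathcal{A}_m$, and Theorem \ref{thm:uniqueness} gives $u_1^+(x,z)=u^*(x+x_0)$ for some $x_0\in\R$, with $u^*$ the unique normalized solution of \eqref{eq:essential1d}, strictly monotone with limits $\pm 1$. In particular $u_1^+$ is $z$-independent, so $\hat u_1^+$ is supported on $\{k_2=0\}$; then \eqref{eq:Fouriersymbol} makes $\hat u_3^+\equiv 0$, hence $u_3^+\equiv 0$ on $\Gamma'$ and the unique Lam\'e extension gives $u_3\equiv 0$ throughout $\R^2\times\T$, establishing (ii).

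\textbf{Bulk structure: (i), (iii), (iv), (v).} With $u_3^+=0$ and $u_1^+$ depending only on $x$, the Lam\'e system for $(u_1,u_2)$ decouples from $z$. Taking partial Fourier transform in $x$ yields a constant-coefficient ODE in $y$ on each half-space whose decaying solutions, fitted to the Dirichlet data $u_1^+$ and to the sign convention \eqref{BC}, are exactly the profiles stated in (iii). Smoothness away from $\Gamma'$ (part (i)) follows from smoothness of $u^*$ (Theorem \ref{thm:existence}) combined with the analytic factor $e^{-|\xi y|}$ and a standard bootstrap in $\xi$-weighted estimates. For (iv), differentiating the explicit representation and evaluating on $y=0^\pm$ produces a Fourier multiplier proportional to $|\xi|$ applied to $u_1^+$, which inverse-transforms into the Hilbert transform formula in the statement; the identity $\sigma_{22}^\pm=0$ is an immediate check on the explicit formula. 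Part (v) follows at once: the Lam\'e equation is $\nabla\cdot\sigma=0$ pointwise in each open half-space, while the interface conditions \eqref{eq:EL}, tested against symmetric admissible perturbations as in \eqref{bcphi}, exactly cancel the distributional jump across $\Gamma'$.

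\textbf{Main obstacle.} The delicate step is the energy identification in the second paragraph: showing that an arbitrary admissible bulk perturbation of $\b u$ restricts to an $H^{1/2}$ perturbation of $u_1^+$ lying in $\mathcal{A}$, and conversely that any scalar perturbation lifts via Lam\'e extension to a bulk competitor preserving the perturbative structure relative to the reference $\b u_\eta$. The careful bookkeeping required to cancel the infinite total bulk energy caused by \eqref{assm:farendlimit} against $\tilde E(\b u_\eta)$, and to show that the Fourier elimination \eqref{eq:Fouriersymbol} of $u_3^+$ preserves regularity and does not introduce spurious kernel modes, is the most technical ingredient. Once this reduction is in hand, every remaining assertion is either an immediate consequence of Theorem \ref{thm:uniqueness} or a direct Fourier-side Lam\'e computation in the spirit of \cite{gao2019mathematical}.
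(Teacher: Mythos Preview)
Your proposal is correct and follows essentially the same route as the paper: reduce the bulk minimizer to the Euler--Lagrange system via Lemma \ref{Lem2.2}, use the Dirichlet-to-Neumann calculation to identify the bulk elastic energy with the nonlocal quadratic form on $\Gamma'$ so that $u_1^+$ is a minimizer of $F$, invoke Theorems \ref{thm:degiorgi} and \ref{thm:uniqueness} for (ii), and then read off (i), (iii)--(v) from the resulting two-dimensional Lam\'e problem (the paper delegates these last items to Lemma~2.3 of \cite{gao2019mathematical} rather than redoing the Fourier computation). Your explicit acknowledgement of the energy-identification step as the main technical obstacle is apt; the paper's proof compresses exactly this point into a single sentence citing \cite{Caffarelli2006An}.
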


These four theorems (Theorem \ref{thm:existence}, \ref{thm:degiorgi}, \ref{thm:uniqueness} and \ref{thm:PN}) are the main results for this work which completely close the problem of existence and rigidity in a general setting including the original PN model with Poisson ratio $\nu\in(-1/2,1/3)$. Following this logic, we will first conduct a preliminary analysis in Section \ref{sec:preliminary} to {assist} readers {to} bridge some gaps in understanding the derivation of \eqref{eq:generalized1d} and be aware of some important properties of the linear operator $\mathcal{L}$. Then we prove Theorem \ref{thm:existence} in Section \ref{sec:existence} and Theorem \ref{thm:degiorgi}, \ref{thm:uniqueness} and \ref{thm:PN} in Section \ref{sec:proofofdg}. Finally, the spectral analysis of operator $L$ is established in Section \ref{sec:spectrum} which proves that 0 is simple and the principle eigenvalue of $L$. For facts in functional analysis and details in the spectral analysis, readers may refer to Appendix \ref{sec:funana}; for proofs of some lemmas in the proof of the theorems, readers may refer to Appendix \ref{sec:proof}.

\section{Preliminary analysis}\label{sec:preliminary}
In this section, we will first provide some details of the derivation of the reduced scalar equation \eqref{eq:reduced1d}, then discuss three important properties that will be used in the proof of the three theorems.
\subsection{Derivation of the reduced scalar equation}
Denote the Fourier transform of $u_i^+(x,z),i=1,2,3$ as $\hat{u}_i^+(\b k),i=1,2,3$ where $\b{k}=(k_1,k_2), k_1\in\R, k_2\in 2\pi\mathbb{Z}$ is the frequency vector. Given $\b{u}$ that satisfies equation \eqref{eq:EL}, one can rewrite $(\sigma_{12}^+,\sigma_{32}^+)$ on $\Gamma'$ as a linear transform of $(u_1^+(\b{k}),u_3^+(\b{k}))$ on the Fourier side:
\begin{align}\label{eq:Fourierside}
\begin{pmatrix}
\ \hat{\sigma}_{12}^+(\b{k})\ \\
\ \hat{\sigma}_{32}^+(\b{k})\ \\
\end{pmatrix}
=-\b A
\begin{pmatrix}
\ \hat{u}_{1}^+(\b{k})\ \\
\ \hat{u}_{3}^+(\b{k})\ \\
\end{pmatrix}
:=-2G
\begin{pmatrix}
\ \left(\dfrac{k_2^2}{|\b{k}|}+\dfrac{1}{1-\nu}\dfrac{k_1^2}{|\b{k}|}\right)\hat{u}_1^+(\b{k})+\dfrac{\nu}{1-\nu}\dfrac{k_1k_2}{{|\b{k}|}}\hat{u}_3^+(\b{k})\ \\
\ \dfrac{\nu}{1-\nu}\dfrac{k_1k_2}{|\b{k}|}\hat{u}_1^+(\b{k})+\left(\dfrac{k_1^2}{|\b{k}|}+\dfrac{1}{1-\nu}\dfrac{k_2^2}{|\b{k}|}\right)\hat{u}_3^+(\b{k})\ \\
\end{pmatrix}.
\end{align}
Details of this derivation can be found in Appendix \cite{dong2020existence}.

From equation \eqref{eq:Fourierside}, the Euler-Lagrangian equation \eqref{eq:EL} can be rewritten as an equation of $u_1^+(x,z),u_3^+(x,z)$ on $\Gamma$, i.e.
\begin{align}\label{eq:gammasys}
-\mathcal{A}\begin{pmatrix}
u_1^+(x,z)\\
u_3^+(x,z)\\
\end{pmatrix}=
\begin{pmatrix}
\dfrac{\p \gamma}{\p u_1}(u_1^+,u_3^+)\\
\dfrac{\p \gamma}{\p u_3}(u_1^+,u_3^+)\\
\end{pmatrix}.
\end{align}
Here $\mathcal{A}$ is the nonlocal differential operator with Fourier symbol $\b A$.

A further simplification can be realized on equation \eqref{eq:gammasys} due to independence of $\gamma$ with $u_3$, i.e. $\dfrac{\p\gamma}{\p u_3}=0$. This independence reduces equation \eqref{eq:gammasys} into an equation of $u_1$. On the Fourier side, the second component in \eqref{eq:gammasys} indicates that we can represent $\hat{u}_3$ by $\hat{u}_1$, i.e. 
\begin{align}
\hat{u}_3(k)=-\dfrac{\nu k_1k_2}{(1-\nu)k_1^2+k_2^2}\hat{u}_1(k).
\end{align}
Substituting this equality to the first component in \eqref{eq:Fourierside} yields
\begin{align*}
\hat{\sigma}_{12}(\b k)&=-2G\left[\left(\dfrac{k_2^2}{|\b k|}+\dfrac{1}{1-\nu}\dfrac{k_1^2}{|\b k|}\right)\hat{u}_1(\b k)+\dfrac{\nu}{1-\nu}\dfrac{k_1k_2}{|\b k|}\hat{u}_3(\b k)\right]\\
&=-\dfrac{2G|\b k|^3}{(1-\nu)k_1^2+k_2^2}\hat{u}_1(\b k).
\end{align*}
Now denote $\mathcal{L}:H^1(\R\times\T)\subset L^2(\R\times\T)\to L^2(\R\times\T)$ the linear operator with Fourier symbol $\dfrac{|\b k|^3}{(1-\nu)k_1^2+k_2^2}$. Then the first component of equation $\eqref{eq:gammasys}$ is in fact an equation of $u_1$, i.e. equation \eqref{eq:reduced1d}:
\begin{align}
\mathcal{L}u_1+\dfrac{\gamma'(u_1)}{2G}=0.
\end{align}
This equation is the reduced scalar equation.

\subsection{Properties of $\mathcal{L}$}
Assumption \eqref{A} ensures that $\mathcal{L}$ is a self-adjoint operator defined on $H^1(\Omega_d)\subset L^2(\Omega_d)$ and maps to $L^2(\Omega_d)$ (see Lemma \ref{lmm:selfadjoint}). By assumption \eqref{B} and \eqref{C}, one can easily conclude that kernel {$H$}  satisfies that for any $\b{z}\neq\b{0}$, 
\begin{align}\label{condition:G2}
0<\dfrac{m}{|\b{z}|^{d+1}}\leq {H}(\b{z})&\leq \dfrac{M}{|\b{z}|^{d+1}}.
\end{align}
Here $m$ and $M$ are positive constants. Indeed, for any non-zero $\b{z}$, we have
\begin{align*}
{H}(\b{z})=\dfrac{1}{|\b{z}|^{n+1}}{H}\left(\dfrac{\b{z}}{|\b{z}|}\right)
\end{align*}
and ${H}(\b{z})$ has a positive lower bound $m$ and a positive upper bound $M$ on the compact set $\mathbb{S}^{d-1}$. So \eqref{condition:G2} holds.

Furthermore, we will prove three important properties of the linear operator $\mathcal{L}$ which play critical roles in the proof of Theorem \ref{thm:existence}, \ref{thm:degiorgi} and \ref{thm:uniqueness}.

First, positivity of {$H$} ensures that if $f$ attains global maximum at point $(x_0,\b{y}_0)\in\Omega_d$, then $\mathcal{L}f|_{(x_0,\b{y}_0)}\geq 0$. We call it the maximal principle of operator $\mathcal{L}$:
\begin{lemma}\label{lmm:max} (Maximal principle) 
	Suppose that $f\in \dot{H}^1(\Omega_d)$ attains global maximum at $(x_M,\b{y}_M)$ and  global minimum at $(x_m,\b{y}_m)$ on $\Omega_d$. Then
	\begin{align*}
	\mathcal{L}f|_{(x_M,\b{y}_M)}\geq 0,\quad \mathcal{L}f|_{(x_m,\b{y}_m)}\leq 0.
	\end{align*}
	The equality holds if and only if $f$ is constant.
\end{lemma}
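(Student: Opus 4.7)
The plan is to work with the singular convolution representation \eqref{def:convolutionL} of $\mathcal{L}$ and exploit two facts: first, that the kernel $K$ is strictly positive on $\Omega_d \setminus\{\b 0\}$; second, that at a global maximum (resp. minimum) the increment $f(\b w_M)-f(\b w')$ has a definite sign. Once the sign is pointwise fixed, the conclusion $\mathcal{L}f|_{\b w_M}\geq 0$ is immediate modulo a care with the principal value.

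\emph{Step 1 (positivity of $K$).} From Assumption \eqref{B}, $H$ is strictly positive on $\R^d\setminus\{\b 0\}$, so by the series representation \eqref{eq:kernel},
\[
K(x,\b y)=\sum_{\b j\in\mathbb{Z}^{d-1}}H(x,\b y+\b j)>0\qquad\text{for every }(x,\b y)\in\Omega_d\setminus\{\b 0\}.
\]
Moreover, the two-sided bound \eqref{condition:G2} for $H$ transfers to a two-sided bound for $K$ near the origin, giving $K(\b z)\asymp|\b z|^{-(d+1)}$ as $\b z\to\b 0$.

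\emph{Step 2 (sign of the integrand at the extremum).} At the global maximum $\b w_M=(x_M,\b y_M)$, one has $f(\b w_M)-f(\b w')\geq 0$ for every $\b w'\in\Omega_d$. Combined with Step 1, the integrand
\[
\bigl(f(\b w_M)-f(\b w')\bigr)\,K(\b w_M-\b w')\geq 0\qquad\text{for every }\b w'\in\Omega_d\setminus\{\b w_M\}.
\]

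\emph{Step 3 (handling the principal value).} The only issue is to show the limit in \eqref{def:convolutionL} exists and equals a non-negative number. For the outer piece $\{|\b w'-\b w_M|>\varepsilon\}$, the integrand is a non-negative measurable function, so monotone convergence gives a (possibly infinite) well-defined limit as $\varepsilon\downarrow 0$. Since the extremum point is achieved, the gradient of $f$ vanishes there (in the distributional sense supplemented by the regularity available to $f$ via $\mathcal{L}f\in L^2$ and elliptic regularity of the order-one symbol), so locally $f(\b w_M)-f(\b w')=O(|\b w'-\b w_M|^{1+\alpha})$ for some $\alpha>0$. Together with $K\asymp|\b z|^{-(d+1)}$ this yields integrability near the singularity, and the PV integral is finite and non-negative. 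Hence $\mathcal{L}f|_{\b w_M}\geq 0$. The inequality at the minimum point is obtained verbatim after flipping signs.

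\emph{Step 4 (equality case).} If $\mathcal{L}f|_{\b w_M}=0$, then a non-negative integrand integrates to zero, forcing $\bigl(f(\b w_M)-f(\b w')\bigr)K(\b w_M-\b w')=0$ for a.e.\ $\b w'$. Since $K>0$ everywhere off the diagonal by Step 1, we conclude $f\equiv f(\b w_M)$ a.e., and (choosing the continuous representative provided by the operator equation) $f$ is constant. The equality case for the minimum is symmetric. Combining the two directions yields the ``if and only if'' statement.

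\emph{Main obstacle.} The delicate point is Step 3: the hypothesis $f\in\dot H^1(\Omega_d)$ does not, in itself, guarantee a pointwise-defined maximum nor the local regularity that tames the singularity of $K$. The honest interpretation (and the one that will be used in the applications of this lemma within the paper) is that $f$ has enough additional smoothness at the extremum so that both the pointwise value and the PV integral make sense; this is always the case when $f$ is a solution (or critical point) of an equation involving $\mathcal{L}$, thanks to elliptic regularity of the order-one pseudodifferential operator $\mathcal{L}$.
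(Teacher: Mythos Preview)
Your argument is essentially the same as the paper's: use the strict positivity of the kernel $K$ together with the sign of $f(\b w_M)-f(\b w')$ at the extremum, and read off the equality case from a non-negative integrand integrating to zero. The paper's proof is a two-line computation that does not discuss the principal value or the regularity needed for the pointwise evaluation; your Steps~3 and the ``main obstacle'' paragraph make explicit the caveats the paper leaves implicit, but the core idea is identical.
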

\begin{proof}
	By positivity of $K$, we know that
	\begin{align*}
	\mathcal{L}f|_{(x_M,\b{y}_M)}&=\int_{\Omega_d}(f(x_M,\b{y}_M)-f(x,\b{y}))K(x_M-x,\b{y}_M-\b{y})\mathrm{d}x\mathrm{d}\b{y}\geq 0,\\
	\mathcal{L}f|_{(x_m,\b{y}_m)}&=\int_{\Omega_d}(f(x_m,\b{y}_m)-f(x,\b{y}))K(x_m-x,\b{y}_m-\b{y})\mathrm{d}x\mathrm{d}\b{y}\leq 0.
	\end{align*}
	Thus the inequality holds and the equality holds if and only if $f(x,\b{y})$ is constant.
\end{proof}
We emphasize that this property of operator $\mathcal{L}$ plays an important role in the proof of the De Giorgi conjecture (see Section \ref{sec:DGproof}).

Second, homogeneity of {$H$} ensures that if $f(x,\b{y})=f(x)$, i.e. $f$ is a simple 1D profile independent with variable $\b{y}$, then there exists constant $c_{\mathcal{L}}$ such that $\mathcal{L}f|_{(x,\b{y})}=c_{\mathcal{L}}((-\p_{xx})^{1/2}f)(x)$. 
\begin{lemma}\label{lmm:propertyofL2}
	Suppose that $f\in \dot{H}^1(\Omega_d)$ satisfies $f(x,\b{y})=f(x)$. Then there exists a constant $c_{\mathcal{L}}>0$ such that
	\begin{align*}
	\mathcal{L}f|(x,\b{y})=c_{\mathcal{L}}((-\p_{xx})^{1/2}f)(x).
	\end{align*}
\end{lemma}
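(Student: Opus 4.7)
The plan is to integrate the transverse variable $\b{y}'$ out of the singular convolution representation of $\mathcal{L}f$ and show that the resulting one-dimensional kernel is proportional to $1/|x-x'|^2$, which is (up to a universal constant) the standard hypersingular kernel for $(-\p_{xx})^{1/2}$ on $\R$. Since $f(x,\b{y})=f(x)$, an application of Fubini to \eqref{def:convolutionL} gives
\begin{align*}
\mathcal{L}f(x,\b{y}) = \mathrm{P.V.}\int_{\R}(f(x)-f(x'))\left(\int_{\T^{d-1}}K(x-x',\b{y}-\b{y}')\,\mathrm{d}\b{y}'\right)\mathrm{d}x'.
\end{align*}
Unfolding the sum in the tiling formula \eqref{eq:kernel} against the unit-cube integral over $\T^{d-1}$, the inner integral collapses to
\begin{align*}
\int_{\T^{d-1}}K(x-x',\b{y}-\b{y}')\,\mathrm{d}\b{y}' = \int_{\R^{d-1}}H(x-x',\b{z})\,\mathrm{d}\b{z} =: \tilde{H}(x-x'),
\end{align*}
which, importantly, no longer depends on $\b{y}$.

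Next I would analyze $\tilde{H}$. Positivity and evenness follow from Assumptions \eqref{B} and \eqref{D}. Finiteness for $x\neq 0$ uses the upper bound $H(\b z)\leq M|\b z|^{-(d+1)}$ in \eqref{condition:G2}, so that $\tilde{H}(x)\leq M\int_{\R^{d-1}}(x^2+|\b z|^2)^{-(d+1)/2}\mathrm{d}\b z<\infty$. The key homogeneity computation, obtained via the substitution $\b z=a\b w$ together with Assumption \eqref{C}, reads
\begin{align*}
\tilde{H}(ax) = a^{d-1}\int_{\R^{d-1}}H(ax,a\b w)\,\mathrm{d}\b w = a^{d-1}\cdot a^{-(d+1)}\tilde{H}(x) = a^{-2}\tilde{H}(x),
\end{align*}
which, combined with evenness, forces $\tilde{H}(x)=c_0|x|^{-2}$ for some constant $c_0>0$. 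Substituting back yields
\begin{align*}
\mathcal{L}f(x,\b{y}) = c_0\cdot\mathrm{P.V.}\int_{\R}\frac{f(x)-f(x')}{|x-x'|^2}\,\mathrm{d}x',
\end{align*}
and up to the explicit universal normalization constant $\kappa>0$ recorded in \cite{kwasnicki2017ten} this integral is exactly $\kappa\,(-\p_{xx})^{1/2}f(x)$. Setting $c_{\mathcal{L}}:=c_0/\kappa>0$ concludes the identity.

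The only genuine technical subtlety is the combined use of Fubini and the principal value. I would handle this by first truncating the singularity of $K$ (and of $H$) at scale $\eps>0$ away from the diagonal, where both kernels are absolutely integrable so Fubini applies verbatim, then performing the one-dimensional reduction on the truncated operators, and finally passing to the limit $\eps\to 0^+$. Convergence of this limit on both sides is controlled by the observation that $f\in\dot{H}^1(\Omega_d)$ with $f(x,\b y)=f(x)$ implies $f\in\dot{H}^1(\R)$ (and in particular $\dot{H}^{1/2}(\R)$), which supplies the cancellation at the diagonal needed to bound the principal value through the Gagliardo-type quadratic form $\||\b\nu|^{1/2}\hat f\|_{L^2}^2$. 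This is the one place where any real care is required.
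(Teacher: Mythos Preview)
Your proposal is correct and follows essentially the same route as the paper: integrate out the transverse variable by unfolding the lattice sum in \eqref{eq:kernel}, use the homogeneity Assumption~\eqref{C} to identify the resulting one-dimensional kernel as $c_0|x|^{-2}$, and recognize this as the kernel of $(-\partial_{xx})^{1/2}$. Your treatment is in fact slightly more careful than the paper's, since you explicitly invoke evenness (Assumption~\eqref{D}) to pass from the scaling relation to the two-sided formula $\tilde H(x)=c_0|x|^{-2}$, and you address the Fubini/principal-value interchange via truncation, which the paper leaves implicit.
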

\begin{proof}
	Consider $g(x)$ which is defined as 
	\begin{align*}
	g(x)=\int_{\R^{d-1}}{H}(x,\b{y})\mathrm{d}\b{y}.
	\end{align*} 
	Then for any $x\neq 0$, a change of variable {implies} that
	\begin{align*}
	g(x)=\int_{\R^{d-1}}{H}(x,\b{y})\mathrm{d}\b{y}=\int_{\R^{d-1}}|x|^{-d-1}{H}(1,\b{y}/x)\mathrm{d}\b{y}=\int_{\R^{d-1}}|x|^{-2}{H}(1,\b{y})\mathrm{d}\b{y}=\dfrac{g(1)}{|x|^2}.
	\end{align*}
	So $g(x)=g(1)|x|^{-2}$ is the kernel of half Laplacian for {one} dimension. Therefore by Fubini's theorem, if $f(x,\b{y})=f(x)$,  we have
	\begin{align*}
	\mathcal{L}f|_{(x,\b{y})}&=\mathrm{P.V.}\int_{\R}\int_{\T^{d-1}}(f(x,\b{y})-f(x',\b{y}'))K(x-x',\b{y}-\b{y}')\mathrm{d}\b{y}'\mathrm{d}x'\\
	&=\mathrm{P.V.}\int_{\R}(f(x)-f(x'))\int_{\T^{d-1}}K(x-x',\b{y}-\b{y}')\mathrm{d}\b{y}'\mathrm{d}x'.
	\end{align*}
	By \eqref{eq:kernel}, we have
	\begin{align*}
	\int_{\T^{d-1}}K(x-x',\b{y}-\b{y}')\mathrm{d}\b{y}'&=\int_{\T^{d-1}}\sum_{\b{j}\in\mathbb{Z}^{d-1}}{H}(x-x',\b{y}-\b{y}'+\b{j})\mathrm{d}\b{y}'\\
	&=\int_{\R^{d-1}}{H}(x-x',\b{y}-\b{y}')\mathrm{d}\b{y}'.
	\end{align*}
	Substituting this back to the formula of $\mathcal{L}f$, we have 
	\begin{align*}
	\mathcal{L}f|_{(x,\b{y})}&=\mathrm{P.V.}\int_{\R}(f(x)-f(x'))\int_{\R^{d-1}}{H}(x-x',\b{y}-\b{y}')\mathrm{d}\b{y}'\mathrm{d}x'\\
	&=g(1)\cdot \mathrm{P.V.}\int_{\R}\dfrac{f(x)-f(x')}{|x-x'|^2}\mathrm{d}x'\\
	&=g(1)\cdot((-\p_{xx})^{1/2}f)(x).
	\end{align*}
	So for any $f$ that is 1D profile, $\mathcal{L}$ acting on $f$ is just $(-\p_{xx})^{1/2}$ acting on $f$ up to a constant. 
\end{proof}

Third, Assumption \eqref{A} (equation \eqref{condition:L}) ensures the following equivalence of semi-norms:
\begin{lemma}\label{lmm:propertyL3}
	There exist positive constants $c_1,c_2,C_1,C_2$ such that 
	\begin{equation}\label{eq:normequivalence}
	\begin{gathered}
	c_1\|u\|_{\dot{H}^{1}(\Omega_d)}\leq\|\mathcal{L}u\|_{L^2(\Omega_d)}\leq C_1\|u\|_{\dot{H}^{1}(\Omega_d)},\\
	c_2\|u\|_{\dot{H}^{1/2}(\Omega_d)}^2\leq\int_{\Omega_d}\int_{\Omega_d}|u(\b{w})-u(\b{w'})|^2K(\b{w}-\b{w}')\mathrm{d}\b{w}\mathrm{d}\b{w'}\leq C_2\|u\|_{\dot{H}^{1/2}(\Omega_d)}^2.
	\end{gathered}	
	\end{equation}
\end{lemma}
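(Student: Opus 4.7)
The plan is to reduce both inequalities to direct applications of Plancherel's theorem combined with the two-sided bound on the Fourier symbol $\sigma_{\mathcal{L}}$ given in Assumption \eqref{A}. The only nontrivial step is passing from the singular convolution form of $\mathcal{L}$ to its Fourier multiplier form, but the paper already noted in Section \ref{sec:setup} that these two definitions are equivalent (following \cite{kwasnicki2017ten} and \cite{dong2020existence}), so I would simply invoke this equivalence as a black box.

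For the first chain of inequalities, I would apply the Plancherel identity on $\Omega_d$ (which is a tensor product of the Fourier transform in $x$ and the Fourier series in $\b{y}$) to obtain
\begin{equation*}
\|\mathcal{L}u\|_{L^2(\Omega_d)}^2 \;=\; \|\widehat{\mathcal{L}u}\|_{L^2(\Omega_d')}^2 \;=\; \int_{\Omega_d'} |\sigma_{\mathcal{L}}(\b{\nu})|^2\,|\hat{u}(\b{\nu})|^2 \,\mathrm{d}\b{\nu}.
\end{equation*}
Squaring the pointwise bound $c|\b{\nu}|\le \sigma_{\mathcal{L}}(\b{\nu})\le C|\b{\nu}|$ from \eqref{condition:L} and integrating against $|\hat{u}|^2$ sandwiches this quantity between $c^2\|u\|_{\dot H^1(\Omega_d)}^2$ and $C^2\|u\|_{\dot H^1(\Omega_d)}^2$, which is the first conclusion with $c_1=c$, $C_1=C$.

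For the second chain, the key identity to establish is
\begin{equation*}
\int_{\Omega_d}\int_{\Omega_d} |u(\b{w})-u(\b{w}')|^2 K(\b{w}-\b{w}')\,\mathrm{d}\b{w}\,\mathrm{d}\b{w}' \;=\; 2\,\langle u,\mathcal{L}u\rangle_{L^2(\Omega_d)},
\end{equation*}
which follows from the symmetry $K(\b{z})=K(-\b{z})$ (inherited from Assumption \eqref{D}) together with Fubini's theorem applied to the principal value in \eqref{def:convolutionL}. I would justify this first for $u\in C_c^\infty$ perturbations of $\eta$, where the double integral is absolutely convergent away from the diagonal and the near-diagonal singularity is tamed by the quadratic vanishing of $|u(\b{w})-u(\b{w}')|^2$, and then pass to general $u\in \dot{H}^{1/2}(\Omega_d)$ by density, using the Fourier-side identity (below) to control both sides simultaneously. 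Once the identity is established, Plancherel yields
\begin{equation*}
\langle u,\mathcal{L}u\rangle_{L^2(\Omega_d)} \;=\; \int_{\Omega_d'}\sigma_{\mathcal{L}}(\b{\nu})|\hat{u}(\b{\nu})|^2\,\mathrm{d}\b{\nu},
\end{equation*}
and inserting the bounds $c|\b{\nu}|\le\sigma_{\mathcal{L}}(\b{\nu})\le C|\b{\nu}|$ gives precisely the equivalence with $\|u\|_{\dot H^{1/2}(\Omega_d)}^2 = \int |\b{\nu}|\,|\hat{u}|^2\,\mathrm{d}\b{\nu}$.

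The main obstacle I anticipate is the rigorous bookkeeping of the principal value and the Fubini swap in the symmetry identity, because $K$ is not integrable near the origin. This is a standard issue for singular integrals of Calder\'on--Zygmund type and can be handled by truncating the kernel to $|\b{w}-\b{w}'|>\eps$, performing the symmetrization on the truncated (bounded) kernel, and then letting $\eps\to 0$ using the $\dot{H}^{1/2}$ regularity of $u$ to control the remainder via \eqref{condition:G2}. Everything else is cosmetic: the two inequalities really are two instances of the same sandwich principle applied to the symbol of $\mathcal{L}$.
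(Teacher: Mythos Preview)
Your proposal is correct and follows essentially the same approach as the paper: Plancherel plus the two-sided symbol bound \eqref{condition:L} for the first chain, and the symmetrization identity $\langle u,\mathcal{L}u\rangle=\tfrac{1}{2}\iint|u(\b w)-u(\b w')|^2K(\b w-\b w')$ followed by Plancherel for the second. The only difference is that you spell out the $\eps$-truncation and density argument for the symmetrization step, whereas the paper simply invokes the symmetry of $K$ without further comment.
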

\begin{proof}
	By Plancherel's theorem, we have
	\begin{align*}
	\|\mathcal{L}u\|_{L^2(\Omega_d)}=\|\widehat{\mathcal{L}u}(\b{\nu})\|_{L^2(\Omega_d')}=\|\sigma_{\mathcal{L}}(\b{\nu})\hat{u}(\b{\nu})\|_{L^2(\Omega_d')}.
	\end{align*}
	Then by \eqref{condition:L}, we know
	\begin{align*}
	\|\widehat{\mathcal{L}u}(\b{\nu})\|_{L^2(\Omega_d')}\leq C\||\b{\nu}|\hat{u}(\nu)\|_{L^2(\Omega_d')}=C\|u\|_{\dot{H}^1(\Omega)},\\
	\|\widehat{\mathcal{L}u}(\b{\nu})\|_{L^2(\Omega_d')}\geq c\||\b{\nu}|\hat{u}(\nu)\|_{L^2(\Omega_d')}=c\|u\|_{\dot{H}^1(\Omega)}.
	\end{align*}
	Here $C$ and $c$ are constants in \eqref{condition:L}. So $\|\mathcal{L}u\|_{L^2(\Omega_d)}$ is equivalent to $\|u\|_{\dot{H}^1(\Omega_d)}$. Moreover, by symmetry assumption of $K$, we have
	\begin{align*}
	\langle u,\mathcal{L}u\rangle_{L^2(\Omega_d)}&=\int_{\Omega_d}\int_{\Omega_d}u(\b{w})(u(\b{w})-u(\b{w'}))K(\b{w}-\b{w}')\mathrm{d}\b{w}\mathrm{d}\b{w'}\\
	&=\dfrac{1}{2}\int_{\Omega_d}\int_{\Omega_d}|u(\b{w})-u(\b{w'})|^2K(\b{w}-\b{w}')\mathrm{d}\b{w}\mathrm{d}\b{w'}.
	\end{align*}
	By properties of the Fourier transform, we have
	\begin{align*}
	\langle u,\mathcal{L}u\rangle_{L^2(\Omega_d)}=\langle \hat{u},\widehat{\mathcal{L}u}\rangle_{L^2(\Omega_d')}=\langle \hat{u},\sigma_{\mathcal{L}}({\b{\nu}})\hat{u}\rangle_{L^2(\Omega_d')}
	\end{align*}
	hence by \eqref{condition:L},
	\begin{align*}
	\langle u,\mathcal{L}u\rangle_{L^2(\Omega_d)}\leq C\||\b{\nu}|^{1/2}\hat{u}(\b{\nu})\|^2_{L^2(\Omega_d')}=C\|u\|_{\dot{H}^{1/2}(\Omega)}^2,\\
	\langle u,\mathcal{L}u\rangle_{L^2(\Omega_d)}\geq c\||\b{\nu}|^{1/2}\hat{u}(\b{\nu})\|^2_{L^2(\Omega_d')}=c\|u\|_{\dot{H}^{1/2}(\Omega)}^2.
	\end{align*}
	Thus \eqref{eq:normequivalence} holds.
\end{proof}

\section{Existence of minimizers}\label{sec:existence}
In this section, we will prove Theorem \ref{thm:existence}. Recall the energy functional defined in \eqref{def:energyfunctional}, i.e.
\begin{align*}
	F(u)&=\dfrac{1}{2}\int_{\Omega_d}\int_{\Omega_d}{|(u(x,\b{y})-u(x',\b{y}')|^2}K(x-x',\b{y}-\b{y}')\\
	&-{|(\eta(x,\b{y})-\eta(x',\b{y}')|^2}K(x-x',\b{y}-\b{y}')\mathrm{d}x\mathrm{d}\b{y}\mathrm{d}x'\mathrm{d}\b{y'}+\int_{\Omega_d}\gamma(u(x,\b y))\mathrm{d}x\mathrm{d}\b{y}.
\end{align*}
We first rewrite this energy functional. In fact, by Lemma \ref{lmm:propertyofL2}, if we denote $v=u-\eta$, then we can rewrite $F$ as
\begin{align}\label{def:redefine2}
F(u)&=\dfrac{1}{2}\int_{\Omega_d}\int_{\Omega_d}{|(\eta(x,\b{y})+v(x,\b{y})-\eta(x',\b{y}')-v(x',\b{y}'))|^2}K(x-x',\b{y}-\b{y}')\nonumber\\
&-{|(\eta(x,\b{y})-\eta(x',\b{y}')|^2}K(x-x',\b{y}-\b{y}')\mathrm{d}x\mathrm{d}\b{y}\mathrm{d}x'\mathrm{d}\b{y'}+\int_{\Omega_d}\gamma(u(w))\mathrm{d}\b{w}\nonumber\\
&=\dfrac{1}{2}\int_{\Omega_d}\int_{\Omega_d}{|v(x,\b{y})-v(x',\b{y}'))|^2}K(x-x',\b{y}-\b{y}')\mathrm{d}x\mathrm{d}\b{y}\mathrm{d}x'\mathrm{d}\b{y'}\nonumber\\
&+{2}c_{\mathcal{L}}\int_{\Omega_d}v(x,\b{y})(-\p_{xx})^{1/2}\eta(x)\mathrm{d}x\mathrm{d}\b{y}+\int_{\Omega_d}\gamma(u(w))\mathrm{d}\b{w}.
\end{align}   
Here $c_{\mathcal{L}}$ is the constant in Lemma \ref{lmm:propertyofL2}. From \eqref{def:redefine2} we see that subtraction of $\eta$ in the definition \eqref{def:energyfunctional} ensures that $F$ is finite if $u$ is bounded and satisfies $u-\eta\in H^{1/2}(\Omega)$. For the sake of convenience, we will switch between \eqref{def:energyfunctional} and \eqref{def:redefine2} when using functional $F$. 

The main idea in the proof of Theorem \ref{thm:existence} is to first slightly modify the minimizing problem on a subset of $\mathcal{A}$ denoted as $\mathcal{A}_I$. $\mathcal{A}_I$ is defined as
\begin{align}\label{def:setAI}
	\mathcal{A}_I:=\{u\in\mathcal{A}: u=\eta\ \mathrm{on}\ \Omega_d\setminus\Omega_d^I\}.
\end{align}
Here $\Omega_d^I=I\times\T^{d-1},\ I=(a,b)$ where $a<-1$ and $b>1$ are real numbers. By definition of minimizers, a minimizer $u_I\in\mathcal{A}_I$ solves the following Dirichlet problem in weak sense:
\begin{align}\label{eq:Dirichlet_2d}
\begin{cases}
\mathcal{L}u(x,\b{y})+\gamma'(u(x,\b{y}))=0,\ (x,\b{y})\in\Omega_d^I\\
u(x,\b{y})=1,\ x\in[b,+\infty),\\
u(x,\b{y})=-1,\ x\in(-\infty,a].\\
\end{cases}
\end{align} 
For a minimizer $u_I$, result similar to Theorem \ref{thm:existence} can be proved, which is summarized in the following proposition:
\begin{proposition}\label{prop:finiteexistence}
	Suppose that $\gamma\in C^2(\R)$ is a double-well {type} potential satisfying condition \eqref{condition:potential}. Define function set $\mathcal{A}$ as in \eqref{def:funcsetA} and energy functional $F$ as in \eqref{def:energyfunctional}. Then:
	\begin{enumerate}[(i)]
		\item (existence) There exists $u_I\in\mathcal{A}_I$ such that $F(u_I)=\min\limits_{u\in\mathcal{A}_I}F(u)$. In particular, $u_I$ is a weak solution to \eqref{eq:Dirichlet_2d}.
		\item (monotonicity) $u_I$ in $(i)$ satisfies that for any $\tau_1>0$, $u_I(x+\tau_1,\b y)\geq u_I(x,\b y)$ holds for a.e. $(x,\b{y})\in\Omega_d^I$, i.e. $u_I(x,y)$ is increasing in $x$ direction.
		\item (symmetry) $u_I$ in $(i)$ satisfies that for any $\b{\tau}_2\in\R^{d-1}$, $u_I(x,\b y+\b{\tau}_2)= u_I(x,\b y)$ holds for a.e. $(x,\b{y})\in\Omega_d^I$, i.e. $u_I(x,\b{y})=u_I(x)$ is a 1D profile. 
	\end{enumerate}
\end{proposition}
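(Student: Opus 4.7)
My approach proves (i) by the direct method of the calculus of variations and proves (ii), (iii) by the energy-decreasing rearrangement technique of \cite{palatucci2013local} combined with the strong maximum principle (Lemma \ref{lmm:max}). The rearrangement step rests on the pointwise inequality $|\min(f,g)(\b{w}) - \min(f,g)(\b{w}')|^2 + |\max(f,g)(\b{w}) - \max(f,g)(\b{w}')|^2 \leq |f(\b{w})-f(\b{w}')|^2 + |g(\b{w})-g(\b{w}')|^2$ together with the identity $\gamma(\min)+\gamma(\max)=\gamma(f)+\gamma(g)$.

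For (i), I use \eqref{def:redefine2} to write
\begin{align*}
F(u) = \mathcal{E}(u-\eta) + 2c_{\mathcal{L}}\int_{\Omega_d}(u-\eta)(-\partial_{xx})^{1/2}\eta\,\mathrm{d}\b{w} + \int_{\Omega_d}\gamma(u)\,\mathrm{d}\b{w},
\end{align*}
where $\mathcal{E}(v) := \tfrac12 \iint|v(\b{w})-v(\b{w}')|^2 K(\b{w}-\b{w}')\,\mathrm{d}\b{w}\,\mathrm{d}\b{w}'$. Since $(-\partial_{xx})^{1/2}\eta\in L^2(\Omega_d)$ (because $\eta''$ is smooth with compact support) and $\mathcal{E}(v)\gtrsim \|v\|_{\dot{H}^{1/2}(\Omega_d)}^2$ by Lemma \ref{lmm:propertyL3}, Cauchy--Schwarz gives a uniform lower bound for $F$ on $\mathcal{A}_I$. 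For a minimizing sequence $\{u_n\}$, pointwise truncation to $[-1,1]$ does not increase $F$ (under the standing convention $\gamma\geq 0$ with $\gamma(\pm 1)=0$), so we may assume $\|u_n\|_\infty\leq 1$; then $v_n := u_n-\eta$ is bounded in $H^{1/2}(\Omega_d)$ with supports inside the bounded set $\overline{\Omega_d^I}$. A subsequence converges weakly in $H^{1/2}$ and strongly in $L^2$ (fractional Rellich--Kondrachov on a bounded interval times a torus) to some $v^*$. Weak lower semicontinuity of $\mathcal{E}$, continuity of the linear term, and Fatou's lemma for $\int\gamma$ yield $F(v^*+\eta)\leq\liminf F(u_n)$, so $u_I := v^*+\eta$ is a minimizer, and first variation with test functions in $C_c^\infty(\Omega_d^I)$ gives $\mathcal{L}u_I+\gamma'(u_I)=0$ weakly in $\Omega_d^I$.

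For (ii), fix $\tau_1\in(0,b-1]$ (larger shifts follow by iteration of small ones) and set $v(x,\b{y}) := u_I(x+\tau_1,\b{y})$, $m := \min(u_I,v)$, $M := \max(u_I,v)$. A case check in $x$ on the five regions delimited by $\{a-\tau_1,\,a,\,b-\tau_1,\,b\}$, together with $u_I\equiv\pm 1$ outside $\Omega_d^I$, yields $m\in\mathcal{A}_I$ and $M\in\mathcal{A}_{I-\tau_1}$. Integrating the pointwise rearrangement inequality against $K\,\mathrm{d}\b{w}\mathrm{d}\b{w}'$ and the pointwise identity against $\mathrm{d}\b{w}$ gives $F(m)+F(M)\leq F(u_I)+F(v)$. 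A short computation using translation invariance of $K$ shows that for every $u\in\mathcal{A}_I$ with $u(\cdot+\tau_1,\cdot)\in\mathcal{A}_{I-\tau_1}$ one has $F(u(\cdot+\tau_1,\cdot))-F(u)=D(\tau_1)$, a constant depending only on $\tau_1$ and $\eta$; hence $v$ is a minimizer in $\mathcal{A}_{I-\tau_1}$ with $\min_{\mathcal{A}_{I-\tau_1}} F = \min_{\mathcal{A}_I} F + D(\tau_1)$. Combining with $F(m) \geq \min_{\mathcal{A}_I} F$ and $F(M) \geq \min_{\mathcal{A}_{I-\tau_1}} F$ forces both inequalities to be equalities, so $m$ is a minimizer in $\mathcal{A}_I$ and satisfies the same Euler--Lagrange equation as $u_I$. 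The nonnegative function $w := u_I-m$ vanishes on $\Omega_d\setminus\Omega_d^I$ and satisfies $\mathcal{L}w = \gamma'(m)-\gamma'(u_I)$ on $\Omega_d^I$; at any $\b{w}_0$ with $w(\b{w}_0)=0$ (necessarily a global minimum of $w$) the right-hand side vanishes while Lemma \ref{lmm:max} forces $\mathcal{L}w|_{\b{w}_0}\leq 0$ with equality only when $w$ is constant. Hence $w\equiv 0$, i.e., $u_I(x,\b{y})\leq u_I(x+\tau_1,\b{y})$.

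For (iii), I repeat the rearrangement with the $\b{y}$-translate $v(x,\b{y}) := u_I(x,\b{y}+\b{\tau}_2)$. Because $\eta$ and $K$ are $\b{y}$-translation invariant, all of $v,m,M$ lie in the original class $\mathcal{A}_I$ and $F(v)=F(u_I)=\min_{\mathcal{A}_I}F$; the rearrangement and maximum principle arguments proceed verbatim to give $u_I(x,\b{y})\leq u_I(x,\b{y}+\b{\tau}_2)$, and the same argument with $-\b{\tau}_2$ gives the reverse inequality, hence equality for every $\b{\tau}_2\in\R^{d-1}$. The principal technical obstacle throughout is making the pointwise maximum principle rigorous, since the variational construction yields $u_I$ only in $H^{1/2}_{\mathrm{loc}}$ a priori, while Lemma \ref{lmm:max} requires $\dot{H}^1$ regularity and genuine pointwise minima. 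This is resolved by bootstrapping from the equation $\mathcal{L}u_I = -\gamma'(u_I)\in L^\infty$: assumption \eqref{A} realizes $\mathcal{L}$ as a first-order elliptic Fourier multiplier, so standard multiplier bounds give $u_I-\eta\in H^2(\Omega_d)$ and then continuity on $\overline{\Omega_d^I}$, legitimizing the pointwise application of Lemma \ref{lmm:max}. A secondary subtlety in (ii)---that $M$ lies in the shifted class $\mathcal{A}_{I-\tau_1}$ rather than $\mathcal{A}_I$---is handled by the explicit translation identity $F(u(\cdot+\tau_1,\cdot))-F(u)=D(\tau_1)$.
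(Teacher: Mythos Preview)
Your argument for (i) is essentially the paper's: direct method, truncation to $[-1,1]$, $H^{1/2}$-bounds via \eqref{def:redefine2} and Lemma~\ref{lmm:propertyL3}, compactness, and lower semicontinuity.

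For (ii) and (iii), however, there is a real gap. You correctly reach the point where $F(m)+F(M)=F(u_I)+F(v)$ and $m$ is itself a minimizer in $\mathcal{A}_I$, but you then try to conclude $m\equiv u_I$ by applying the pointwise strong maximum principle (Lemma~\ref{lmm:max}) to $w=u_I-m$. This breaks in two places. First, the bootstrap ``$\mathcal{L}u_I=-\gamma'(u_I)\in L^\infty$ hence $u_I-\eta\in H^2(\Omega_d)$'' is not justified: the Euler--Lagrange equation holds only weakly on $\Omega_d^I$, not on all of $\Omega_d$, and for nonlocal Dirichlet problems the solution is typically no better than H\"older up to the boundary, so $\mathcal{L}(u_I-\eta)$ need not lie in $L^2(\Omega_d)$ globally. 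Second, even granting continuity, $w\geq 0$ attains its global minimum value $0$ on the complement $\Omega_d\setminus\Omega_d^I$, where neither $u_I$ nor $m$ satisfies any equation; Lemma~\ref{lmm:max} then gives $\mathcal{L}w<0$ there, but you have nothing to contradict it with. There is no mechanism in your argument forcing an interior zero of $w$.

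The paper bypasses both issues entirely by exploiting the \emph{equality characterization} in Lemma~\ref{lmm:energydecreasing}: $F(m)+F(M)=F(u_I)+F(v)$ holds if and only if $(u_I-v)$ has a fixed sign a.e.\ on $\Omega_d$. This is a measure-theoretic statement requiring nothing beyond $H^{1/2}$. For (ii) the boundary values $u_I=\pm 1$ outside $\Omega_d^I$ then select the sign $u_I\leq v$. For (iii) the paper does not use a pure $\b{y}$-translation (which carries no boundary asymmetry to pick a sign); instead it translates by $(\tau_1,\b{\tau}_2)$ with $\tau_1>0$, obtains $u_I(x+\tau_1,\b{y}+\b{\tau}_2)\geq u_I(x,\b{y})$ exactly as in (ii), and then sends $\tau_1\to 0$ via averaging and Lebesgue differentiation. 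No regularity beyond $H^{1/2}$, and no appeal to Lemma~\ref{lmm:max}, is needed at this stage.
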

To prove monotonicity and symmetry, one needs to utilize a critical technique: the energy decreasing rearrangement method in \cite{palatucci2013local} which is based on the rearrangement inequality. After constructing $\{u_I\}$ in Proposition \ref{prop:finiteexistence}, a minimizer of $F(u)$ on $\mathcal{A}$ will be constructed using these minimizers on finite intervals and Theorem \ref{thm:existence} is ready to be proved.

Following this logic, we will first carefully introduce the energy decreasing rearrangement method utilized in \cite{palatucci2013local} in Section \ref{sec:rearrangement}. This tool is prepared for the proof of Proposition \ref{prop:finiteexistence} in Section \ref{sec:finiteinterval} which ensures existence of the minimizer on $\mathcal{A}_I$. Then in Section \ref{sec:technicallemmas}, we will introduce several technical lemmas before proving Theorem \ref{thm:existence} in Section \ref{sec:proofofexistence}.
\subsection{Energy decreasing rearrangement}\label{sec:rearrangement}
We denote $a_+=\max\{a,0\}$ and $a_-=-\min\{a,0\}$, i.e., $a_+$ and $a_-$ represent the positive part and the negative part of $a$ respectively. In this section, we will introduce the energy decreasing rearrangement method that is used in \cite{palatucci2013local}. In fact, this method relies on the following elementary equality:
\begin{lemma}\label{lmm:rearrangeinequality}
	(rearrangement) Suppose that $a_1,a_2,b_1,b_2$ are four real numbers. Denote $a=\min\{a_1,a_2\}, A=\max\{a_1,a_2\}, b=\min\{b_1,b_2\}$ and $B=\max\{b_1,b_2\}$. Then the following inequality holds:
	\begin{align}\label{ineq:rearrange}
	ab+AB-a_1b_1-a_2b_2 = (a_1-a_2)_+(b_1-b_2)_- + (a_1-a_2)_-(b_1-b_2)_+ \geq 0. 
	\end{align}
	In particular, $ab+AB-a_1b_1-a_2b_2=0$ if and only if $(a_1-a_2)(b_1-b_2)\geq 0$.
\end{lemma}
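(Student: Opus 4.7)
The plan is to verify the identity by an elementary case analysis, exploiting a symmetry to minimize the bookkeeping. Observe first that both sides of \eqref{ineq:rearrange} are invariant under the swap $(a_1,b_1)\leftrightarrow(a_2,b_2)$: the quantities $a,A,b,B$ and the sum $a_1b_1+a_2b_2$ are manifestly symmetric, and on the right the two summands simply exchange because $(a_1-a_2)_+$ becomes $(a_2-a_1)_+=(a_1-a_2)_-$ under the swap, and similarly for the $b$-factors. Using this symmetry I would assume without loss of generality that $a_1\geq a_2$, which fixes $a=a_2$, $A=a_1$, $(a_1-a_2)_+=a_1-a_2$, and $(a_1-a_2)_-=0$.

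Under this normalization there are only two subcases. In the co-monotone subcase $b_1\geq b_2$, direct substitution gives $ab+AB=a_2b_2+a_1b_1$, so the left-hand side collapses to $0$; meanwhile $(b_1-b_2)_-=0$ kills the single potentially nonzero summand on the right, and both sides agree at $0$. In the anti-monotone subcase $b_1<b_2$, one has $b=b_1$, $B=b_2$, so the left-hand side expands to $a_2b_1+a_1b_2-a_1b_1-a_2b_2=(a_1-a_2)(b_2-b_1)$, while on the right only the first summand survives and equals $(a_1-a_2)_+(b_1-b_2)_-=(a_1-a_2)(b_2-b_1)$. In either case the claimed identity holds, and nonnegativity of the right-hand side is transparent since each summand is a product of two nonnegative numbers.

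For the equality characterization, the right-hand side is a sum of two nonnegative terms, so it vanishes if and only if both vanish. The first term is strictly positive precisely when $a_1>a_2$ and $b_1<b_2$, and the second is strictly positive precisely when $a_1<a_2$ and $b_1>b_2$; the complement of the union of these two strict-sign patterns is exactly the condition $(a_1-a_2)(b_1-b_2)\geq 0$, yielding the stated equality criterion.

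I do not expect any genuine obstacle here — the lemma is pure combinatorial bookkeeping. The only real decision is expository: one can either grind through four cases according to the signs of $a_1-a_2$ and $b_1-b_2$, or, as above, use the $(a_1,b_1)\leftrightarrow(a_2,b_2)$ symmetry to collapse to a single nontrivial case, which keeps the argument compact and makes the role of the anti-monotone alignment visible.
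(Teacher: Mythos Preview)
Your proof is correct and takes essentially the same approach as the paper: an elementary case analysis on the relative orders of $a_1,a_2$ and $b_1,b_2$. The only difference is cosmetic --- you invoke the swap symmetry $(a_1,b_1)\leftrightarrow(a_2,b_2)$ to cut four cases to two and you explicitly verify the middle identity in \eqref{ineq:rearrange}, whereas the paper enumerates all four cases and checks only the inequality and the equality criterion.
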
 
Readers may refer to Appendix \ref{sec:proof} for proof of this inequality. Now we are ready to introduce the energy decreasing rearrangement method. 
\begin{lemma}\label{lmm:energydecreasing}
	(energy decreasing rearrangement in \cite{palatucci2013local}) Suppose that $u,v$ belong to set $\mathcal{A}$ which is defined as in \eqref{def:funcsetA}. Define $m(\b{w})=\min\{u(\b{w}),v(\b{w})\}$ and $M(\b{w})=\max\{u(\b{w}),v(\b{w})\}$. Then 
	\begin{align}\label{eq:energydecreasingrearrangement}
	&\ \ \ \ [F(u(\b{w}))+F(v(\b{w}))]-
	[F(m(\b{w}))+F(M(\b{w}))]\\\nonumber
	&=\int_{\Omega_d}\int_{\Omega_d}K(\b w-\b w')[(u-v)_+(\b w)(u-v)_-(\b w')+(u-v)_-(\b w)(u-v)_+(\b w')]\mathrm{d}\b w\mathrm{d}\b w'.
	\end{align}
	In particular, $F(u(\b w))+F(v(\b w))=F(M(\b w))+F(m(\b w))$ holds if and only if 
	\begin{align}
	(u(\b{w})-v(\b{w}))(u(\b{w}')-v(\b{w}'))\geq 0
	\end{align}
	holds for almost every $\b w,\b w'$ in $\Omega_d$, i.e. either $u(\b{w})\geq v(\b{w})$ or $u(\b{w})\leq v(\b{w})$ holds a.e. in $\Omega_d$.
\end{lemma}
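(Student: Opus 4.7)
The plan is to reduce the identity to a pointwise algebraic rearrangement identity, which then follows directly from Lemma \ref{lmm:rearrangeinequality}, and to integrate against the kernel $K$. The key observation is that at every point $\b w$, the multisets $\{u(\b w), v(\b w)\}$ and $\{m(\b w), M(\b w)\}$ agree, so all three ``one-point'' contributions to the energy cancel automatically in the difference $F(u)+F(v)-F(m)-F(M)$.

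First I would dispose of the potential part and the $\eta$-correction: since $\gamma(u(\b w)) + \gamma(v(\b w)) = \gamma(m(\b w)) + \gamma(M(\b w))$ pointwise, the $\int\gamma$ terms cancel in the difference; similarly, each of the four energies $F(u), F(v), F(m), F(M)$ contains the same $-|\eta(\b w)-\eta(\b w')|^2 K(\b w-\b w')$ term, which also cancels in the combination $F(u)+F(v)-F(m)-F(M)$. Thus only the double integral $Q(f) := \tfrac{1}{2}\int\int |f(\b w)-f(\b w')|^2 K(\b w-\b w')\,d\b w\,d\b w'$ matters, and I need to show
\[
Q(u)+Q(v)-Q(m)-Q(M) = \int_{\Omega_d}\int_{\Omega_d} K(\b w-\b w')\bigl[(u-v)_+(\b w)(u-v)_-(\b w')+(u-v)_-(\b w)(u-v)_+(\b w')\bigr]\,d\b w\,d\b w'.
\]

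Expanding each squared difference via $|a-b|^2 = a^2+b^2-2ab$ and using the pointwise identity $u^2(\b w)+v^2(\b w) = m^2(\b w)+M^2(\b w)$, the ``diagonal'' terms cancel exactly and the integrand of the left-hand side reduces to
\[
2\bigl[m(\b w)m(\b w')+M(\b w)M(\b w') - u(\b w)u(\b w') - v(\b w)v(\b w')\bigr] \cdot \tfrac{1}{2} K(\b w-\b w').
\]
Now I apply Lemma \ref{lmm:rearrangeinequality} with $a_1=u(\b w),\ a_2=v(\b w),\ b_1=u(\b w'),\ b_2=v(\b w')$: the lemma gives exactly
\[
m(\b w)m(\b w')+M(\b w)M(\b w') - u(\b w)u(\b w') - v(\b w)v(\b w') = (u-v)_+(\b w)(u-v)_-(\b w')+(u-v)_-(\b w)(u-v)_+(\b w'),
\]
which is the desired pointwise identity. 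Multiplying by $K(\b w-\b w')$ and integrating over $\Omega_d\times\Omega_d$ gives \eqref{eq:energydecreasingrearrangement}. Fubini is applicable because all integrands are nonnegative once the rearrangement identity is in place, and $u,v \in \mathcal{A}$ ensures the relevant quadratic forms are finite via Lemma \ref{lmm:propertyL3}.

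For the equality characterization, the right-hand side is a nonnegative integrand weighted by the strictly positive kernel $K$ (by Assumption \eqref{B} together with the lower bound in \eqref{condition:G2}), so it vanishes if and only if $(u-v)_+(\b w)(u-v)_-(\b w') = (u-v)_-(\b w)(u-v)_+(\b w') = 0$ for a.e.\ pair $(\b w,\b w')$; equivalently, $(u(\b w)-v(\b w))(u(\b w')-v(\b w')) \geq 0$ a.e., which forces $u-v$ to not change sign on $\Omega_d$ up to a null set, i.e.\ either $u\ge v$ or $u\le v$ a.e. The only subtlety to watch is the pointwise-a.e.\ nature of $m$ and $M$ given that $u,v$ are merely in $H^{1/2}_{\mathrm{loc}}$; this is standard since $\min$ and $\max$ preserve measurability, and the kernel-weighted identity is an equality of measurable functions integrated against a positive measure, so no real obstacle arises.
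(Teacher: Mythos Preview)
Your proof is correct and follows essentially the same route as the paper: cancel the local $\gamma$ and $\eta$ contributions, expand the squared differences, use $u^2+v^2=m^2+M^2$ pointwise to reduce to the cross terms, and then invoke Lemma~\ref{lmm:rearrangeinequality} with $a_1=u(\b w),\,a_2=v(\b w),\,b_1=u(\b w'),\,b_2=v(\b w')$. The equality characterization via positivity of $K$ is likewise identical to the paper's argument.
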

\begin{proof}
	Recall energy functional defined in \eqref{def:energyfunctional}, i.e.
	\begin{align*}
	F(u)&=\dfrac{1}{2}\int_{\Omega_d}\int_{\Omega_d}{|(u(\b{w})-u(\b{w}')|^2}K(\b{w}-\b{w}')\\
	&-{|(\eta(\b{w})-\eta(\b{w}')|^2}K(\b{w}-\b{w}')\mathrm{d}\b{w}\mathrm{d}\b{w'}+\int_{\Omega_d}\gamma(u(\b{w}))\mathrm{d}\b{w}.
	\end{align*}
	since $\gamma(u(\b{w}))$ is a local term, we have 
	\begin{align*}
	\int_{\Omega_d}\gamma(u(\b{w}))+\gamma(v(\b{w}))\mathrm{d}\b{w}=\int_{\Omega_d}\gamma(m(\b{w}))+\gamma(M(\b{w}))\mathrm{d}\b{w}.
	\end{align*}
	So we only need to compare the convolution term. A straightforward calculation implies that 
	\begin{align*}
	&\ \ \ \ [F(u(\b{w}))+F(v(\b{w}))]-
	[F(m(\b{w}))+F(M(\b{w}))]\\
	&=\dfrac{1}{2}\int_{\Omega_d}\int_{\Omega_d}K(\b w-\b w')[|u(\b{w})-u(\b{w}')|^2+|v(\b{w})-v(\b{w}')|^2\\
	&-|m(\b{w})-m(\b{w}')|^2-|M(\b{w})-M(\b{w}')|^2]\mathrm{d}\b w\mathrm{d}\b w'.
	\end{align*}
	By the definition of $m$ and $M$, we know that
	\begin{align*}
	u(\b{w})^2+v(\b{w})^2=m(\b{w})^2+M(\b{w})^2
	\end{align*}
	holds for every $\b{w}\in\Omega_d$, thus 
	\begin{align*}
	&\ \ \ \ [F(u(\b{w}))+F(v(\b{w}))]-
	[F(m(\b{w}))+F(M(\b{w}))]\\
	&=\int_{\Omega_d}\int_{\Omega_d}K(\b w-\b w')[M(\b w)M(\b w')+m(\b w)m(\b w')-u(\b w)u(\b w')-v(\b w)v(\b w')]\mathrm{d}\b w\mathrm{d}\b w'.
	\end{align*}
	Let $a_1=u(\b{w}),\ a_2=v(\b{w}),\ b_1=u(\b{w}'),\ b_2=v(\b{w}')$, then in terms of Lemma \ref{lmm:rearrangeinequality} we know that
	\begin{align*}
	m(\b{w})=a,\ M(\b{w})=A,\  m(\b{w}')=b,\ M(\b{w}')=B.
	\end{align*}
	By Lemma \ref{lmm:rearrangeinequality}, we have
	\begin{align*}
	&\ \ \ \ [F(u(\b{w}))+F(v(\b{w}))]-
	[F(m(\b{w}))+F(M(\b{w}))]\\\nonumber
	&=\int_{\Omega_d}\int_{\Omega_d}K(\b w-\b w')[(u-v)_+(\b w)(u-v)_-(\b w')+(u-v)_-(\b w)(u-v)_+(\b w')]\mathrm{d}\b w\mathrm{d}\b w'.
	\end{align*}
	Thus \eqref{eq:energydecreasingrearrangement} holds. So in terms of integrating $\b{w},\b{w}'$, $F(u(\b w))+F(v(\b w))=F(m(\b w))+F(M(\b w))$ holds if and only if $(u(\b{w})-v(\b{w}))(u(\b{w}')-v(\b{w}'))\geq 0$ holds a.e. in $\Omega_d$. This concludes the proof.
\end{proof}
Lemma \ref{lmm:energydecreasing} ensures that if we are given two functions $u,v$ defined on $\Omega_d$, we can construct a pair $m,M$ such that they have a total energy less than $F(u)+F(v)$. Here comes the name of this tool: the energy decreasing property of this construction $m,M$ is realized by the precedent rearrangement (Lemma \ref{lmm:rearrangeinequality}), so we name it as energy decreasing rearrangement method. Now we are ready to prove Proposition \ref{prop:finiteexistence} using Lemma \ref{lmm:energydecreasing}.

\subsection{Relationship with the increasing rearrangement}

Clarification on this rearrangement technique is necessary to help readers distinguish it from other similar tools. Another rearrangement skill broadly utilized in the calculus of variations is the increasing rearrangement, which was first introduced in \cite{riesz1930inegalite}. Given $u:\R\to\R$ satisfying $\lim\limits_{x\to\pm\infty}u(x)=\pm 1$, the increasing rearrangement of $u$, denoted as $u^*$, is an increasing function with sublevel sets which are of same volume as those of $u$, i.e.,
\begin{align} \label{def:u*}
	\{x: t\leq u^*(x)\}=\{x: t\leq u(x)\}^*\ \mathrm{for}\  \mathrm{every}\ t\in(-1,1).
\end{align}
Here the rearrangement of a Borel set $A\in\R$, i.e. $A^*$, is defined as 
\begin{align} \label{def:rearrangmentofsets}
	A^*:=[c,\infty),\ c:=b-|A\cap [a,b]|\ \mathrm{for}\  \mathrm{every}\ A\ \mathrm{satisfying}\ [b,\infty)\subset A\subset [a,\infty).
\end{align} 
The machinery of the increasing rearrangement is exactly the same as that of the cumulative density function (CDF) matching approach. 
The measure-preserving property maintains local functional energies (e.g., the double-well potential), while the monotonicity reduces the convolution-type non-local functional energy (e.g., the reduction of the elastic energy on the slip plane).

Due to this energy reduction property, the increasing rearrangement is also employed to minimize functional energies that share common structures with $F$ in \eqref{def:energyfunctional} \cite{alberti1998nonlocal}. Although results derived in \cite{alberti1998nonlocal} are similar to ours, the context of \cite{alberti1998nonlocal} is much different in the sense that the convolution kernel $J(h)$ satisfies
\begin{align}
J(h)\in L^1(\R^d), J(h)|h|\in L^1(\R^d),
\end{align}
while in the current context we have 
\begin{align}
K(h)\sim|h|^{-d-1}\notin L^1(\R^d),\ K(h)|h|\sim |h|^{-d}\notin L^1(\R^d).
\end{align}
Therefore, the availability of the increasing rearrangement in our setting is indirect. Application of the increasing rearrangement was also considered in \cite{palatucci2013local} in which the authors commented that it worked for $(-\Delta)^{\alpha}, \alpha\in(1/2,1)$ instead of the critical case $(-\Delta)^{1/2}$ which is exactly in the PN model.

In contrast, originated in \cite{palatucci2013local}, Lemma \ref{lmm:energydecreasing} is powerful in this critical case (also other non-critical cases as discussed in \cite{palatucci2013local}) with a much simpler and elementary proof compared to the increasing rearrangement \cite{alberti2000some}. The main difference between Lemma \ref{lmm:energydecreasing} and the increasing rearrangement is that the former rearranges a pair of profiles $u$ and $v$ while the latter merely works on a single candidate $u$. Therefore, the hidden mechanisms of these two methods are totally different and readers should be aware of this discrepancy.

\subsection{Minimizers on finite intervals}\label{sec:finiteinterval}
Before proving Proposition \ref{prop:finiteexistence}, we introduce the translation-invariant property of the energy functional $F$ which is applied in the proof.
\begin{lemma}\label{lmm:translationinvariant}
	(translation invariant) Consider $F$ in \eqref{def:energyfunctional}. Then for any $(c_1,\b{c_2})\in\Omega_d$, we have 
	\begin{align*}
	F(u(x+c_1,\b{y}+\b{c_2}))=F(u(x,\b{y})),
	\end{align*}
	i.e. $F$ is invariant under any translation.
\end{lemma}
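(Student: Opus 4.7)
The plan is to verify translation invariance by a direct change of variables, exploiting the three-part structure of $F$ in \eqref{def:energyfunctional}. I would write $F(u)=F_1(u)-C_\eta+F_2(u)$, where
\[
F_1(u)=\tfrac12\iint_{\Omega_d\times\Omega_d}|u(\b w)-u(\b w')|^2\,K(\b w-\b w')\,\mathrm d\b w\,\mathrm d\b w'
\]
is the quadratic nonlocal part, $C_\eta$ is the same double integral with $\eta$ in place of $u$ (a constant that does not depend on $u$), and $F_2(u)=\int_{\Omega_d}\gamma(u(\b w))\,\mathrm d\b w$ is the local part.

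For the translated profile $\tilde u(\b w):=u(\b w+(c_1,\b c_2))$, I would carry out the substitution $\tilde{\b w}=\b w+(c_1,\b c_2)$ and $\tilde{\b w}'=\b w'+(c_1,\b c_2)$ inside $F_1(\tilde u)$. Since the kernel depends only on the difference $\b w-\b w'=\tilde{\b w}-\tilde{\b w}'$, and Lebesgue measure on $\R$ together with Haar measure on $\T^{d-1}$ are both translation invariant, the integrand and the measure both stay put and $F_1(\tilde u)=F_1(u)$. The constant $C_\eta$ needs no attention, since it is independent of $u$. For $F_2(\tilde u)$, the same single change of variables in $\Omega_d$ immediately gives $F_2(\tilde u)=F_2(u)$. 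Summing yields $F(\tilde u)=F(u)$.

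The only point of care, which I would flag briefly, is interpreting the $\b y$-translation on the torus: $\b c_2$ acts modulo $\mathbb Z^{d-1}$, and this is consistent with the kernel $K$ in \eqref{eq:kernel} being defined via the periodization $\sum_{\b j\in\mathbb Z^{d-1}}H(x,\b y+\b j)$, which is genuinely periodic in $\b y$ so that $K$ is a well-defined function on $\R\times\T^{d-1}$ depending only on the difference of its arguments. There is no real obstacle in this lemma; the assertion is essentially that $F$ is invariant under the abelian group action of $\R\times\T^{d-1}$ on itself by translation, and this reduces to three elementary observations: the difference structure of $K$, translation invariance of the underlying measures on $\R$ and $\T^{d-1}$, and the fact that the $\eta$-term in \eqref{def:energyfunctional} is a constant independent of the variable $u$.
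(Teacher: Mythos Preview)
There is a genuine gap in your decomposition $F(u)=F_1(u)-C_\eta+F_2(u)$. For profiles $u\in\mathcal A$ (in particular for $u=\eta$ itself), the quantity
\[
F_1(u)=\tfrac12\iint_{\Omega_d\times\Omega_d}|u(\b w)-u(\b w')|^2\,K(\b w-\b w')\,\mathrm d\b w\,\mathrm d\b w'
\]
is $+\infty$: after integrating out the torus variables via Lemma~\ref{lmm:propertyofL2}, the contribution from the region $x\ge 1,\ x'\le -1$ is $\int_1^\infty\!\int_{-\infty}^{-1}4(x-x')^{-2}\,\mathrm d x'\,\mathrm d x=\infty$. Thus $C_\eta=+\infty$ as well, and your splitting is an $\infty-\infty$ expression; the statement ``$C_\eta$ needs no attention, since it is independent of $u$'' is precisely where the argument breaks. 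The very reason the definition \eqref{def:energyfunctional} carries the subtraction of the $\eta$-term inside the double integral is to render $F$ finite on $\mathcal A$; you cannot undo that renormalisation.

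The paper's proof keeps the integrand together. After the change of variables (which is legitimate because $F(\tilde u)$ is finite), one finds
\[
F(u)-F(\tilde u)=\tfrac12\iint_{\Omega_d\times\Omega_d}\Bigl[|\eta(\b w+c)-\eta(\b w'+c)|^2-|\eta(\b w)-\eta(\b w')|^2\Bigr]K(\b w-\b w')\,\mathrm d\b w\,\mathrm d\b w',
\]
and this \emph{difference} is absolutely integrable even though neither piece is. One then reduces to a one-dimensional integral via Lemma~\ref{lmm:propertyofL2} and shows it vanishes by an explicit computation, splitting $\R$ into $J=[-1-c_1,1]$ and its complement and checking that the contributions cancel. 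That calculation is the actual content of the lemma; it cannot be replaced by a bare change of variables.
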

The proof of this lemma only relies on some elementary computations of integrals using Lemma \ref{lmm:propertyofL2}. Readers can refer to Appendix \ref{sec:proof} for detail. We will again use this invariant property later to prove the lower boundedness of $F$ on $\mathcal{A}$. 

Now we are ready to prove Proposition \ref{prop:finiteexistence} which addresses the minimizer of $F$ on the set $\mathcal{A}_I$ defined in \eqref{def:setAI}:
\begin{align*}
	\mathcal{A}_{I}=\{u\in\mathcal{A}:u=\eta\ \mathrm{on}\ \Omega_d\setminus\Omega_d^I\}.
\end{align*}
We aim to prove that there exists a minimizer $u_I$ of $F$ on set $\mathcal{A}_I$. 
\begin{proof}[Proof of Proposition \ref{prop:finiteexistence}]
	First we prove statement (i). Notice that for any $u\in\mathcal{A}_I$, $F(u)$ is uniformly bounded from below by a constant that depends on $\eta(x)$, i.e.
	\begin{align*}
		F(u) & =\dfrac{1}{2}\int_{\Omega_d}\int_{\Omega_d}(|(u(x,\b{y})-u(x',\b{y}')|^2)K(x-x',\b{y}-\b{y}')\\
		&-{|(\eta(x,\b{y})-\eta(x',\b{y}')|^2}K(x-x',\b{y}-\b{y}')\mathrm{d}x\mathrm{d}x'\mathrm{d}\b{y}\mathrm{d}\b{y}'+\int_{\Omega_d}\gamma(u(x,\b{y}))\mathrm{d}x\mathrm{d}\b{y}\\
		&\geq -\dfrac{1}{2}\int_{\Omega_d^I}\int_{\Omega_d^I}{|(\eta(x,\b{y})-\eta(x',\b{y}')|^2}K(x-x',\b{y}-\b{y}')\mathrm{d}x\mathrm{d}x'\mathrm{d}\b{y}\mathrm{d}\b{y}'\\
		&-\int_{(\Omega_d^I)^c}\int_{\Omega_d^I}{|(\eta(x,\b{y})-\eta(x',\b{y}')|^2}K(x-x',\b{y}-\b{y}')\mathrm{d}x\mathrm{d}x'\mathrm{d}y\mathrm{d}y'\\
		&= -\dfrac{c_{\mathcal{L}}}{2}\int_{I}\int_{I}\dfrac{(\eta(x)-\eta(x'))^2}{(x-x')^2}\mathrm{d}x\mathrm{d}x'-c_{\mathcal{L}}\int_{I}\int_{I^c}\dfrac{(\eta(x)-\eta(x'))^2}{(x-x')^2}\mathrm{d}x'\mathrm{d}x>-\infty.
	\end{align*}
	The last equality is by Lemma \ref{lmm:propertyofL2}. $c_{\mathcal{L}}>0$ is the constant in Lemma \ref{lmm:propertyofL2}. Therefore, there exists a minimizing sequence $\{u_n\}\subset\mathcal{A}_I$ such that 
	\begin{align}
		F(u_n)\to C_I:=\inf\limits_{u\in\mathcal{A}_I}F(u)\ \mathrm{as}\ n\to\infty.
	\end{align} 
	
	For any $u\in\mathcal{A}$, consider 
	\begin{align*}
		\tilde{u}=\max\{\min\{u,1\},-1\},
	\end{align*}
	i.e. $\tilde{u}$ is the cut-off of $u$ from below by $-1$ and from above by 1. Then $\tilde{u}\in\mathcal{A}$ and satisfies
	\begin{align*}
		F(\tilde{u})\leq F(u)
	\end{align*}
	by definition of $F$. So we can assume $|u_n|\leq 1$ without loss of generality. 
	
	Denote $v_n=u_n-\eta$. Then $v_n$ is supported on $\Omega_d^I$ since $u_n\in\mathcal{A}_I$. This indicates that $\|v_n\|_{L^2(\Omega_d)}^2\leq 4(b-a)$, i.e. $\{v_n\}$ is uniformly bounded in $L^2(\Omega_d)$. Moreover, $\{v_n\}$ is also uniformly bounded in $H^{1/2}(\Omega_d)$ since by Lemma \ref{lmm:propertyL3}, 
	\begin{align*}
		\|v_n\|_{\dot{H}^{1/2}(\Omega_d)}^2 &\leq  \dfrac{1}{c_2}\int_{\Omega}\int_{\Omega}{|(v_n(x,\b{y})-v_n(x',\b{y}')|^2}K(x-x',\b{y}-\b{y}')\mathrm{d}x\mathrm{d}x'\mathrm{d}\b{y}\mathrm{d}\b{y}'.		
	\end{align*}
	Meanwhile, using the definition of $F$ in \eqref{def:redefine2} and the Cauchy-Schwartz inequality, we have
	\begin{align*}
	&\ \ \  \dfrac{1}{c_2}\int_{\Omega}\int_{\Omega}{|(v_n(x,\b{y})-v_n(x',\b{y}')|^2}K(x-x',\b{y}-\b{y}')\mathrm{d}x\mathrm{d}x'\mathrm{d}\b{y}\mathrm{d}\b{y}\\
	&= \dfrac{2}{c_2}F(u_n)-\dfrac{{4c_{\mathcal{L}}}}{c_2}\int_{\Omega_d}v_n(x,\b{y})(-\p_{xx})^{1/2}\eta(x)\mathrm{d}x\mathrm{d}\b{y}-\dfrac{2}{c_2}\int_{\Omega_d}\gamma(u)\mathrm{d}\b{w}\\
	&\leq \dfrac{2}{c_2}F(u_n)+\dfrac{{2c_{\mathcal{L}}}}{c_2}\|v_n\|^2_{L^2(\Omega_d)}+\dfrac{{2c_{\mathcal{L}}}}{c_2}\|(-\p_{xx})^{1/2}\eta(x)\|^2_{L^2(\R)}.\\
	&\leq C'.
	\end{align*}
	Here $c_{\mathcal{L}}$ is the constant in Lemma \ref{lmm:propertyofL2} and $c_2$ is the constant in Lemma \ref{lmm:propertyL3}. $C'$ is a constant that only depends on $a,b$ and $\eta$ but independent with any certain minimizing sequence. Therefore, $\{v_n\}$ is uniformly bounded in $H^{1/2}(\Omega_d)$.
	
	Now we are ready to prove that $u_I$ is indeed a minimizer. Uniform boundedness of $\{v_n\}$ in $H^{1/2}(\Omega_d)$ implies that there exists $v_I\in H^{1/2}(\Omega_d)$ supported on $\Omega_d^I$ such that $v_n\rightharpoonup v_I$ in $H^{1/2}(\Omega_d)$. Hence $u_I:=v_I+\eta\in\mathcal{A}_I$ and up to a subsequence,
	\begin{align*}
		u_n&\to u_I,\ v_n\to v_I\quad\mathrm{a.e.}\ \mathrm{in}\  \Omega_d,\\
		u_n&\to u_I,\ v_n\to v_I\quad\mathrm{in}\ L^2(\Omega_d^I).
	\end{align*}
	Therefore, by Fatou's lemma, the strong $L^2$ convergence and the definition of $F$ in \eqref{def:redefine2}, we know that 
	\begin{align*}
	C_I &= \liminf\limits_{n\to\infty}F(u_n)\\
	    &= \liminf\limits_{n\to\infty} \dfrac{1}{2}\int_{\Omega_d}\int_{\Omega_d}{|(v_n(x,\b{y})-v_n(x',\b{y}')|^2}K(x-x',\b{y}-\b{y}')\mathrm{d}x\mathrm{d}x'\mathrm{d}\b{y}\mathrm{d}\b{y}'\\
	    &+{2c_{\mathcal{L}}}\int_{\Omega_d}v_n(x,\b{y})(-\p_{xx})^{1/2}\eta(x)\mathrm{d}x\mathrm{d}\b{y}+\int_{\Omega_d}\gamma(u_n)\mathrm{d}x\mathrm{d}\b{y}\\
	    &\geq \dfrac{1}{2}\int_{\Omega_d}\int_{\Omega_d}{|(v_I(x,\b{y})-v_I(x',\b{y}')|^2}K(x-x',\b{y}-\b{y}')\mathrm{d}x\mathrm{d}x'\mathrm{d}\b{y}\mathrm{d}\b{y}'\\
	    &+{2c_{\mathcal{L}}}\int_{\Omega_d}v_I(x,\b{y})(-\p_{xx})^{1/2}\eta(x)\mathrm{d}x\mathrm{d}\b{y}+\int_{\Omega_d}\gamma(u_I)\mathrm{d}x\mathrm{d}\b{y}\\
	    &= F(u_I)\geq C_I.
	\end{align*} 
	Thus $u_I\in\mathcal{A}_I$ is indeed a minimizer of $F$ on set $\mathcal{A}_I$. In particular, it is a weak solution to \eqref{eq:Dirichlet_2d} by a simple calculation of the first variation of the energy functional $F$. This proves (i).
	
	We will use the energy decreasing rearrangement method (Lemma \ref{lmm:energydecreasing}) to prove (ii) and (iii). First we prove (ii). For any given $\tau>0$, consider $v(x,\b{y})=u_I(x+\tau,\b{y})$. Denote $m(x,\b{y})=\min\{u_I(x,\b{y}),v(x,\b{y})\}$ and $M(x,\b{y})=\max\{u_I(x,\b{y}),v(x,\b{y})\}$. Then by Lemma \ref{lmm:energydecreasing}, we know that 
	\begin{align*}
		F(m)+F(M)\leq F(u_I)+F(v).
	\end{align*}
	
	This inequality is in fact an equality. Notice that $M(x,\b{y})=1$ if $x\geq b-\tau$ and $M(x,\b{y})=-1$ if $x\leq a-\tau$, so $M(x,\b{y})\in\mathcal{A}_{(a-\tau,b-\tau)}$. By the translation invariant property (Lemma \ref{lmm:translationinvariant}), we know that $v(x,\b{y})=u_I(x+\tau,\b{y})$ is in fact a minimizer of $F$ on $\mathcal{A}_{(a-\tau,b-\tau)}.$ Thus
	\begin{align*}
		F(M)\geq F(v).
	\end{align*}
	Note that $m(x,\b{y})=1$ if $x\geq b$ and $m(x,\b{y})=-1$ if $x\leq a$, so $m(x,\b{y})\in\mathcal{A}$. Then by minimality of $u_I$ we know that 
	\begin{align*}
		F(m)\geq F(u_I).
	\end{align*} 
	Therefore, we have 
	\begin{align*}
		F(v)+F(u_I)\leq F(m)+F(M) \leq F(u_I)+F(v).
	\end{align*}
	Thus  
	\begin{align*}
		F(v)+F(u_I)=F(m)+F(M).
	\end{align*}
	By Lemma \ref{lmm:energydecreasing}, this equality holds if and only if either $u_I(x,\b{y})\geq v(x,\b{y})$ or $u_I(x,\b{y})\leq v(x,\b{y})$ holds almost surely in $\Omega_d$. By the boundary condition and that $|u_I|\leq 1$, we know that the former is true, i.e.
	\begin{align*}
		u_I(x,\b{y})\leq v(x,\b{y})=u_I(x+\tau,\b{y}).
	\end{align*}
	This inequality holds for a.e. $(x,\b y)\in \Omega_d^I$ for arbitrary $\tau>0$. This proves (ii).
	
	Eventually, we prove (iii). Again we will adopt Lemma \ref{lmm:energydecreasing}, i.e. the energy decreasing rearrangement method. Unlike the case in the proof of (ii) where we only consider translation in $x$ direction, we consider translation in both $x$ and $\b{y}$ direction, but with $x$ direction still positive. For any given $(\tau_1,\b{\tau_2})$ such that $\tau_1>0,\b{\tau_2}\in\T^{d-1}$, consider $w(x,\b{y})=u_I(x+\tau_1,\b{y}+\b{\tau_2})$. As in the proof of (ii), by considering the minimum and maximum of $u_I$ and $w$, we conclude that
	\begin{align}\label{eq:compare}
		 u_I(x+\tau_1,\b{y}+\b{\tau_2})\geq u_I(x,\b{y})
	\end{align}
	holds for almost every $(x,\b{y}) \in \Omega_d^I$. 
	
	Now let $\tau_1\to 0$. For any $\b{w}\in\Omega_d$, denote $S_{\epsilon}(\b{w})$ the square with length $\epsilon$ centered at $\b{w}$. Then for any $(x,\b{y})\in\Omega_{I}$ (not almost every but every) and $\epsilon>0$, by inequality \eqref{eq:compare}, we have
	\begin{align*}
		\dfrac{1}{\epsilon^d}\int_{S_{\epsilon}(x,\b{y})}u_I(s,\b{t}+\b{\tau_2})\mathrm{d}s\mathrm{d}\b{t}&=\lim\limits_{\tau_1\to 0^+}\dfrac{1}{\epsilon^d}\int_{S_{\epsilon}(x,\b{y})}u_I(s+\tau_1,\b{t}+\b{\tau_2})\mathrm{d}s\mathrm{d}\b{t}\\
		&\geq\dfrac{1}{\epsilon^d}\int_{S_{\epsilon}(x,\b{y})}u_I(s,\b{t})\mathrm{d}s\mathrm{d}\b{t}
	\end{align*}
	Then let $\epsilon\to 0$ and by Lebesgue's differential theorem, we have
	\begin{align*}
		u_I(x,\b{y}+\b{\tau_2})\geq u(x,\b{y})
	\end{align*}
	holds for a.e. $(x, \b y)\in\Omega_d^I$. This holds for arbitrary $\b{\tau_2}\in\T^{d-1}$ without specific assignment of sign of each component. Then taking both $\b{\tau_2}$ and $-\b{\tau_2}$ in the translation concludes that 
	\begin{align*}
		u_I(x,\b{y}+\b{\tau_2})=u_I(x,\b{y})
	\end{align*}
	holds for a.e. $(x, \b y)\in\Omega_d^I$. This closes the whole proof of Proposition \ref{prop:finiteexistence}.
\end{proof}

\subsection{Technical lemmas}\label{sec:technicallemmas}
Before proving the existence theorem, i.e. Theorem \ref{thm:existence}, we will first provide several technical lemmas whose proofs are attached in Appendix \ref{sec:proof}. These lemmas finally lead to the fact that $F$ is lower bounded on $\mathcal{A}$. This enables the application of the direct method in calculus of variations in the proof of Theorem \ref{thm:existence}. 

Lemma \ref{lmm:approx} addresses an approximation property: 
\begin{lemma}\label{lmm:approx} 
	For any $u\in\mathcal{A}$ such that $|u|\leq 1$, there exist a sequence $\{u_n\}\subset\mathcal{A}$ and positive constants $\{M_n\}$ such that
	\begin{align*}
		 u_n-\eta\in C^{\infty}(\Omega_d),\  u_n=\eta\ \mathrm{on}\ |x|>M_n,
	\end{align*}
	and
	\begin{align*}
		F(u_n)\to F(u)\ \mathrm{as}\ n\to\infty.
	\end{align*}
\end{lemma}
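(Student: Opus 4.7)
The plan is to approximate $v := u - \eta\in H^{1/2}(\Omega_d)$ in $H^{1/2}$ by smooth functions that are compactly supported in the $x$-direction, then verify that each of the three terms in the equivalent form \eqref{def:redefine2} of $F$ is continuous along this approximation. Since $|u|,|\eta|\leq 1$, we have $\|v\|_{L^\infty}\leq 2$. Fix a smooth cutoff $\chi_n:\R\to[0,1]$ equal to $1$ on $[-n,n]$ and supported in $[-n-1,n+1]$, together with a standard non-negative mollifier $\rho_\eps$ on $\R\times\T^{d-1}$ respecting the periodicity in $\b y$, and set
\[ v_n := \rho_{\eps_n}\ast(\chi_n v), \qquad u_n := v_n + \eta, \]
choosing $\eps_n\downarrow 0$ fast enough that $v_n\to v$ in $H^{1/2}(\Omega_d)$; this is possible by the standard density of $C^\infty$-functions compactly supported in $x$ and periodic-smooth in $\b y$ inside $H^{1/2}(\Omega_d)$. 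Young's inequality gives $\|v_n\|_{L^\infty}\leq 2$, and $\mathrm{supp}(v_n)\subset[-n-1-\eps_n,n+1+\eps_n]\times\T^{d-1}$, so $u_n-\eta=v_n\in C^\infty(\Omega_d)$ and $u_n=\eta$ outside $|x|>M_n:=n+1+\eps_n$, as required.

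The quadratic term in \eqref{def:redefine2} is handled by Lemma \ref{lmm:propertyL3}: the bilinear form $(v,w)\mapsto\int\!\!\int(v(\b{w})-v(\b{w}'))(w(\b{w})-w(\b{w}'))K(\b{w}-\b{w}')\,d\b{w}\,d\b{w}'$ is continuous on $H^{1/2}(\Omega_d)$, so $v_n\to v$ in $H^{1/2}$ gives convergence of that piece of $F$. For the linear term $\int v_n\,(-\p_{xx})^{1/2}\eta\,d\b w$, one verifies that $(-\p_{xx})^{1/2}\eta\in L^2(\R)$ by writing out its principal-value representation and using $\eta\equiv\pm 1$ outside $[-1,1]$, which yields the decay $|(-\p_{xx})^{1/2}\eta(x)|\leq C/(1+x^2)$. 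Since $v_n\to v$ in $L^2(\Omega_d)$ (as implied by $H^{1/2}$-convergence and the definition of the $H^{1/2}$-norm), Cauchy--Schwarz then delivers convergence of the linear term.

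The main step is $\int\gamma(u_n)\,d\b w\to\int\gamma(u)\,d\b w$. Split the integral at $|x|=2$. On $\{|x|\leq 2\}$, the domain has finite measure and $|u_n|,|u|$ are uniformly bounded; passing to an a.e.\ convergent subsequence via $L^2$-convergence, standard dominated convergence applies. On $\{|x|>2\}$ one has $\eta(x)=\pm 1$, and Taylor's theorem together with $\gamma(\pm 1)=\gamma'(\pm 1)=0$ and $\gamma\in C^2$ gives $|\gamma(\pm 1+s)|\leq Cs^2$ for $|s|\leq 3$; hence $\gamma(u_n)\leq Cv_n^2$ and $\gamma(u)\leq Cv^2$. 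Because $v_n\to v$ in $L^2$ implies $v_n^2\to v^2$ in $L^1$ (via the identity $v_n^2-v^2=(v_n-v)(v_n+v)$ and Cauchy--Schwarz), the generalized dominated convergence theorem, applied to $\gamma(u_n)\to\gamma(u)$ a.e.\ with the $L^1$-convergent majorants $Cv_n^2$, yields convergence on $\{|x|>2\}$.

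The chief technical difficulty lies on the unbounded region $\{|x|>2\}$: plain dominated convergence fails because neither $\gamma(u_n)$ nor $|u_n-u|$ admits a uniform integrable majorant a priori. The quadratic bound $\gamma(\pm 1+s)=O(s^2)$ near the wells, combined with the $L^2$-convergence of $v_n$ to $v$ inherited from $H^{1/2}$-density, is precisely what upgrades $H^{1/2}$-density into $L^1$-convergence of the potential energies, and is the crucial observation driving the continuity of $F$ along the approximation.
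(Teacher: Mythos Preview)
Your proof is correct and tracks the paper's approach closely: approximate $v=u-\eta$ in $H^{1/2}(\Omega_d)$ by smooth functions compactly supported in $x$, then check continuity of each term in \eqref{def:redefine2}. The quadratic and linear terms are handled identically in both arguments.

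The only genuine difference is in the potential term $\int\gamma(u_n)\to\int\gamma(u)$. The paper bounds the \emph{difference} directly via two applications of the mean value theorem: writing $\gamma'(\theta u+(1-\theta)u_n)=\gamma'(\eta)+\gamma''(\,\cdot\,)\bigl[\theta(u-\eta)+(1-\theta)(u_n-\eta)\bigr]$, and using that $\gamma'(\eta)$ is supported in $\{|x|\leq 1\}$ together with Cauchy--Schwarz, it obtains the quantitative estimate $\bigl|\int\gamma(u_n)-\gamma(u)\bigr|\leq C'\|u_n-u\|_{L^2}$. You instead split the domain at $|x|=2$ and invoke generalized dominated convergence with the varying majorants $Cv_n^2\to Cv^2$ in $L^1$. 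Both routes rest on the same observation---$\gamma$ vanishes to second order at $\pm 1$---but the paper's is a bit more direct and yields a rate, whereas yours needs the routine subsequence-of-subsequence argument (since $L^2$ convergence gives a.e.\ convergence only along subsequences), which you left implicit.
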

\begin{proof}
	Given $u\in\mathcal{A}$ such that $|u|\leq 1$, because $u-\eta\in H^{1/2}(\Omega_d)$, so standard density argument (see \cite{adams2003sobolev,stein1970singular}) claims that there exists ${u_n}$ and $\{M_n\}$ such that $u_n-\eta\in C^{\infty}(\Omega_d)$, $u_n=\eta$ on $|x|>M_n$ and $\|(u-\eta)-(u_n-\eta)\|_{H^{1/2}(\Omega_d)}^2\to 0.$ as $n\to\infty$. Therefore,
	\begin{align*}
	u_n-\eta&\to u-\eta\quad\mathrm{in}\ H^{1/2}(\Omega_d),\\
	u_n-\eta&\to u-\eta\quad\mathrm{in}\ L^2(\Omega_d).
	\end{align*}
	Denote $v=u-\eta$, $v_n=u_n-\eta$. Then by \eqref{def:redefine2}, we have
	\begin{equation}\label{eq:rewriteF}
		\begin{aligned}
			F(u)&=\dfrac{1}{2}\int_{\Omega_d}\int_{\Omega_d}|v(\b{w})-v(\b{w'})|^2K(\b{w}-\b{w}')\mathrm{d}\b{w}\mathrm{d}\b{w'}\\
			&+c_{\mathcal{L}}\int_{\Omega_d}(u-\eta)(-\p_{xx})^{1/2}\eta(x)\mathrm{d}x\mathrm{d}\b{y}+\int_{\Omega_d}\gamma(u)\mathrm{d}x\mathrm{d}\b{y},
		\end{aligned}
	\end{equation} 
	here $c_{\mathcal{L}}$ is the constant in Lemma \ref{lmm:propertyofL2}. Then by Lemma \ref{lmm:propertyL3} and convergence in $H^{1/2}(\Omega_d)$ and $L^2(\Omega_d)$, we have
	\begin{align*}
		\int_{\Omega_d}\int_{\Omega_d}|v_n(\b{w})-v_n(\b{w'})|^2K(\b{w}-\b{w}')\mathrm{d}\b{w}\mathrm{d}\b{w'}&\to\int_{\Omega_d}\int_{\Omega_d}|v(\b{w})-v(\b{w'})|^2K(\b{w}-\b{w}')\mathrm{d}\b{w}\mathrm{d}\b{w'}\\
		\int_{\Omega_d}(u_n-\eta)(-\p_{xx})^{1/2}\eta(x)\mathrm{d}x\mathrm{d}\b{y}&\to\int_{\Omega_d}(u-\eta)(-\p_{xx})^{1/2}\eta(x)\mathrm{d}x\mathrm{d}\b{y}
	\end{align*}
	as $n\to \infty$.
	
	For the non-linear potential term in \eqref{eq:rewriteF}, the mean value theorem ensures that there exist $\theta(x,y)$ and $\tilde{\theta}(x,y)\in[0,1]$ such that
	\begin{align*}
		&\ \ \  \left|\int_{\Omega_d}\gamma(u_n)-\gamma(u)\mathrm{d}x\mathrm{d}\b{y}\right|\\
		&\leq \int_{\Omega_d}|\gamma'(\theta u+(1-\theta)u_n)||u-u_n|\mathrm{d}x\mathrm{d}\b{y}\\
		&\leq \int_{\Omega_d}|\gamma'(\eta+\theta (u-\eta)+(1-\theta)(u_n-\eta))||u-u_n|\mathrm{d}x\mathrm{d}\b{y}\\
		&\leq \int_{\Omega_d}|\gamma'(\eta)||u-u_n|\mathrm{d}x\mathrm{d}\b{y}\\
		&+\int_{\Omega_d}|\gamma''(\eta+\tilde{\theta}\theta (u-\eta)+\tilde{\theta}(1-\theta)(u_n-\eta))||\theta (u-\eta)+(1-\theta)(u_n-\eta)||u-u_n|\mathrm{d}x\mathrm{d}\b{y}.
	\end{align*}
	Because $|u|\leq 1$, we can assume $|u_n|\leq 1$ without loss of generality. Thus
	\begin{align*}
		|\gamma''(\eta+\tilde{\theta}\theta (u-\eta)+\tilde{\theta}(1-\theta)(u_n-\eta))|
	\end{align*}
	is uniformly bounded in $\Omega_d$. Also notice that $\gamma'(\eta)=0$ for $|x|>1$, then by the Cauchy-Schwartz inequality, there exists $C>0$ that only depends on $u,\eta$ such that
	\begin{align*}
		\left|\int_{\Omega_d}\gamma(u_n)-\gamma(u)\mathrm{d}x\mathrm{d}\b{y}\right|&\leq\int_{-1}^1\int_{\T^{d-1}}|\gamma'(\eta)||u-u_n|\mathrm{d}\b{y}\mathrm{d}x\\
		&+C(\|u-\eta\|_{L^2(\Omega_d)}+\|u_n-\eta\|_{L^2(\Omega_d)})\|u-u_n\|_{L^2(\Omega_d)}\\
		&\leq C'\|u-u_n\|_{L^2(\Omega_d)}\to 0. 
	\end{align*}
	Here $C'$ is a constant that only depends on $\gamma,\eta$ and $u$. This closes the proof.
\end{proof}
The following lemma claims that we can use the nonlinear potential to control $L^2$ norm of $u-\eta$.
\begin{lemma}\label{lmm:l2bound}
	Suppose that $u$ is a non-decreasing function on $\R$ such that $u(x)=v(x)+\eta(x)$ is non-decreasing, $|u(x)|\leq 1$ for all $x\in\R$ and $u(0)=0$. $\gamma\in C^2(\R)$ satisfies \eqref{condition:potential}. Then there exist constants $C_1$ and $C_2$ such that 
	\begin{align*}
	\int_{\R}\gamma(u(x))\mathrm{d}x+C_1\geq C_2\|v\|_{L^2}^2. 
	\end{align*}
	Here $C_1>0$ and $C_2>0$ only depend on $\gamma(x)$ and are independent with $v$.
\end{lemma}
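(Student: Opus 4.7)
The plan is to exploit two structural features of $\gamma$: quadratic non-degeneracy of the wells at $\pm 1$ (from $\gamma''(\pm 1) > 0$) and strict positivity on compact subsets of $(-1,1)$, combined with the sign information on $u$ forced by monotonicity and the normalization $u(0)=0$. Concretely, Taylor's theorem at the wells provides $\delta \in (0,1)$ and $c_0 > 0$ with
\begin{equation*}
\gamma(s) \geq c_0 (s-1)^2 \text{ on } [1-\delta, 1], \qquad \gamma(s) \geq c_0 (s+1)^2 \text{ on } [-1, -1+\delta],
\end{equation*}
while compactness plus \eqref{condition:potential} yields $c_1 := \min_{[-1+\delta,\,1-\delta]} \gamma > 0$.

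Next I would split $\mathbb{R}$ into the three pieces $(-\infty, -1]$, $[-1,1]$, $[1,\infty)$ matching the structure of $\eta$ in \eqref{eq:1dprofile}. On the bounded middle piece, the crude bound $|v| \leq |u| + |\eta| \leq 2$ gives $\int_{-1}^{1} v^2 \,\mathrm{d}x \leq 8$, which can be absorbed into $C_1$. On the right ray $[1,\infty)$, where $\eta \equiv 1$ and hence $v = u-1$, monotonicity of $u$ together with $u(0)=0$ forces $u(x) \in [0,1]$. I would then further subdivide this ray according to whether $u(x) \in [1-\delta,1]$ or $u(x) \in [0,1-\delta)$: on the former, the quadratic lower bound gives $v^2 \leq \gamma(u)/c_0$ pointwise; on the latter, $v^2 \leq 4 \leq (4/c_1)\gamma(u)$ pointwise. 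Adding these yields $v^2 \leq C \gamma(u)$ on $[1,\infty)$ with $C := \max(1/c_0,\, 4/c_1)$, so $\int_1^{\infty} v^2 \,\mathrm{d}x \leq C \int_1^{\infty} \gamma(u)\,\mathrm{d}x$. A symmetric argument on $(-\infty,-1]$ (where $\eta \equiv -1$, $v = u+1$, and $u \in [-1,0]$) produces the analogous bound.

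Combining the three contributions gives $\|v\|_{L^2}^2 \leq 8 + C \int_{\mathbb{R}} \gamma(u)\,\mathrm{d}x$, which is the claim with $C_2 = 1/C$ and $C_1 = 8/C$. The argument is largely routine; the one step that genuinely uses the hypotheses beyond trivialities is the subdivision on $[1,\infty)$, where monotonicity of $u$ and $u(0)=0$ rule out the pathological scenario of $u(x)$ being close to $-1$ while $x \geq 1$. Without this sign restriction, $\gamma(u)$ could be tiny on a set where $v = u - 1 \approx -2$, and the pointwise inequality $v^2 \leq C\gamma(u)$ would fail; this is the only place where $u(0)=0$ is essential.
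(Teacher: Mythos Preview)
Your proof is correct and follows essentially the same approach as the paper. The paper's version is slightly more compressed: instead of splitting into near-well and away-from-well subcases, it directly asserts a single quadratic lower bound $\gamma(s)\ge C_1(s-1)^2$ on all of $[0,1]$ (and symmetrically on $[-1,0]$), which of course is equivalent to combining your Taylor bound near the well with your positivity bound on the compact remainder. Both arguments then use monotonicity plus $u(0)=0$ in exactly the same way to localize $u$ to the correct half-interval on each ray.
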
 
\begin{proof}
	According to \eqref{condition:potential}, $\gamma''(\pm 1)>0$ and $\gamma$ attains strict minimum at $-1$ and $1$, so there exists $C_1>0$ such that 
	\begin{align*}
		&\gamma(x)\geq C_1(x-1)^2,\ \mathrm{if}\ x\in[0,1],\\
		&\gamma(x)\geq C_1(x+1)^2,\ \mathrm{if}\ x\in[-1,0].
	\end{align*}
	Remember that $u(x)$ is non-decreasing, $u(0)=0$ and $-1\leq \eta(x)\leq 1$, so 
	\begin{align*}
		-1&\leq v(x)\leq 0,\ \mathrm{if}\ x\geq 1,\\
		0&\leq v(x)\leq 1,\ \mathrm{if}\ x\leq -1.
	\end{align*}
	Therefore, we have
	\begin{align*}
	\int_{\R}\gamma(u(x))\mathrm{d}x &\geq \int_{-\infty}^{-1}\gamma(v(x)-1)\mathrm{d}x+\int_{1}^{+\infty}\gamma(v(x)+1)\mathrm{d}x\\
	&\geq C_1\int_{-\infty}^{-1}v(x)^2\mathrm{d}x+C_1\int_{1}^{+\infty}v(x)^2\mathrm{d}x\\
	&\geq C_1\|v\|_{L_2}^2-2C_1.
	\end{align*}
\end{proof}
Using these technical lemmas, we are ready to prove Theorem \ref{thm:existence}.

\subsection{Proof of Theorem \ref{thm:existence}: existence of the minimizers}\label{sec:proofofexistence}
As stated in previous sections, we will use the calculus of variations to prove Theorem \ref{thm:existence} by minimizing $F$ on set $\mathcal{A}$. Proved in Proposition \ref{prop:finiteexistence}, a key property of the minimizers $u_I$ is that for and $\tau_1>0,\ \b{\tau}_2\in\T^{d-1}$,
\begin{equation}\label{condition:key}
	\begin{aligned}
	u_I(x+\tau_1,\b y)&\geq u_I(x,\b y)\ a.e.\ \mathrm{in}\ \Omega_d,\\
	u_I(x,\b y+\b{\tau}_2)&=u_I(x,\b y)\ a.e.\ \mathrm{in}\ \Omega_d.
	\end{aligned}
\end{equation}
To prove lower boundedness of $F$ with the help of \eqref{condition:key}, we consider the following subset of $\mathcal{A}$ which is much finer than $\mathcal{A}$:
\begin{align}\label{def:setB}
\mathcal{B}:=\left\{u\in\mathcal{A}:\ u\  \mathrm{satisfies}\ \eqref{condition:key},\ |u|\leq 1\ \mathrm{and}\ u(0)=0\right\}.
\end{align} 
This definition is inspired by Proposition \ref{prop:finiteexistence} and preceding technical lemmas: according to Proposition \ref{prop:finiteexistence}, we know that $u_I\in\mathcal{B}$ if $u_I(0)=0$. Here $u_I$ is the minimizer of $F$ on $\mathcal{A}_I$ which is constructed in Proposition \ref{prop:finiteexistence}. Through the bridge of set $\mathcal{B}$, we will prove that:
\begin{lemma}\label{lmm:energylowerbound}
Consider set $\mathcal{A}$ in \eqref{def:funcsetA}, set $\mathcal{B}$ in \eqref{def:setB}, and functional energy $F$ in \eqref{def:energyfunctional}. Then: 
	\begin{enumerate}[(i)]
		\item $\inf\limits_{u\in\mathcal{A}}F(u)=\inf\limits_{u\in\mathcal{B}}F(u)$.
		\item There exist positive constants $C_3$ and $C_4$ that only depend on $\eta$ and $\gamma$ such that for any $u\in\mathcal{B}$,
		\begin{align*}
		F(u)\geq C_3\|u-\eta\|^2_{H^{1/2}(\Omega_d)}-C_4.
		\end{align*}
	\end{enumerate}
\end{lemma}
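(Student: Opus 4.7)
\textbf{Proof proposal for Lemma \ref{lmm:energylowerbound}.} The plan is to handle (i) by a minimizing-sequence argument that successively improves a candidate $u \in \mathcal{A}$ until it lands in $\mathcal B$, and to handle (ii) by combining the three normalizing constraints that define $\mathcal{B}$ with Lemma \ref{lmm:propertyL3} and Lemma \ref{lmm:l2bound}.

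For (i), since $\mathcal{B} \subset \mathcal{A}$, the inequality $\inf_{\mathcal A} F \leq \inf_{\mathcal B} F$ is immediate. For the reverse inequality, fix any $u \in \mathcal{A}$. First truncate by replacing $u$ with $\max\{\min\{u,1\},-1\}$, which stays in $\mathcal{A}$ and weakly decreases $F$ (the cutoff used in the proof of Proposition \ref{prop:finiteexistence}). Next, by Lemma \ref{lmm:approx}, produce a sequence $\{u_n\} \subset \mathcal{A}$, each equal to $\eta$ outside a bounded slab $\Omega_d^{I_n}$ with $I_n=(a_n,b_n)$, such that $F(u_n) \to F(u)$. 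For each $n$, Proposition \ref{prop:finiteexistence} provides a minimizer $u_{I_n}$ of $F$ over $\mathcal{A}_{I_n}$ with $F(u_{I_n}) \leq F(u_n)$; parts (ii)--(iii) of that proposition make $u_{I_n}$ non-decreasing in $x$, independent of $\b y$, and (by the same cutoff) bounded by $1$ in modulus. Finally, such a bounded monotone 1D profile connecting $-1$ and $1$ has a pointwise representative with a unique zero level; translating in $x$ to put this zero at the origin leaves $F$ unchanged by Lemma \ref{lmm:translationinvariant} and lands the resulting function in $\mathcal{B}$. Thus $\inf_{\mathcal B} F \leq F(u_n) \to F(u)$ for arbitrary $u \in \mathcal{A}$.

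For (ii), write $F$ via \eqref{def:redefine2} with $v = u - \eta$. Lemma \ref{lmm:propertyL3} bounds the quadratic term below by $\tfrac{c_2}{2}\|v\|_{\dot H^{1/2}(\Omega_d)}^2$. Since $\eta$ depends only on $x$ and $\eta'$ is smooth with compact support, $(-\p_{xx})^{1/2}\eta \in L^2(\R)$; applying Cauchy--Schwarz first in $x$ and then in $\b y$ (noting $|\T^{d-1}|=1$) gives
\begin{align*}
\Bigl|\,2c_{\mathcal{L}}\!\int_{\Omega_d}\! v(x,\b{y})\,(-\p_{xx})^{1/2}\eta(x)\,\mathrm{d}x\mathrm{d}\b{y}\Bigr|
\;\leq\; 2c_{\mathcal{L}}\,\|v\|_{L^2(\Omega_d)}\,\|(-\p_{xx})^{1/2}\eta\|_{L^2(\R)}.
\end{align*}
For the potential term, the conditions defining $\mathcal{B}$ (independence of $\b y$, monotonicity in $x$, $|u|\leq 1$, and $u(0)=0$) allow Lemma \ref{lmm:l2bound} to yield $\int_{\Omega_d}\gamma(u)\,\mathrm{d}x\mathrm{d}\b{y} \geq C_2\|v\|_{L^2(\Omega_d)}^2 - C_1$. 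Inserting an AM--GM with a sufficiently small weight $\eps$ into the cross-term estimate absorbs an arbitrarily small multiple of $\|v\|_{L^2(\Omega_d)}^2$ at the cost of a constant depending only on $\eta$, and choosing $\eps < C_2/(2c_\mathcal{L})$ leaves a strictly positive coefficient in front of $\|v\|_{L^2(\Omega_d)}^2$. The resulting inequality
\begin{align*}
F(u) \;\geq\; \tfrac{c_2}{2}\|v\|_{\dot H^{1/2}(\Omega_d)}^2 \,+\, \tfrac{C_2}{2}\|v\|_{L^2(\Omega_d)}^2 \,-\, C_4
\end{align*}
is exactly the desired bound with $C_3 = \min\{c_2/2, C_2/2\}$.

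The main technical subtlety is in (i): the condition $u(0)=0$ is pointwise, while $u_{I_n}$ a priori has only $H^{1/2}$ regularity, so I would interpret it through the monotone representative furnished by the $x$-monotonicity of $u_{I_n}$ (or, equivalently, shift so that $\{u_{I_n} \leq 0\}$ has a prescribed left endpoint). This is a routine fix rather than a deep issue, and it is what allows the translation invariance from Lemma \ref{lmm:translationinvariant} to be used to normalize the minimizer in the final step.
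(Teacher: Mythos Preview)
Your proposal is correct and follows essentially the same approach as the paper: for (i) you truncate, approximate by compactly supported perturbations via Lemma \ref{lmm:approx}, drop to the finite-interval minimizer via Proposition \ref{prop:finiteexistence}, and translate into $\mathcal{B}$ using Lemma \ref{lmm:translationinvariant}; for (ii) you combine the representation \eqref{def:redefine2}, Lemma \ref{lmm:propertyL3}, Lemma \ref{lmm:l2bound}, and a weighted Cauchy--Schwarz/AM--GM on the cross term, exactly as the paper does. Your remark on interpreting $u(0)=0$ through the monotone representative is a reasonable clarification of a point the paper leaves implicit.
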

\begin{proof}
	We first prove (i). Because $\mathcal{B}\subset\mathcal{A}$, so we have $\inf\limits_{u\in\mathcal{A}}F(u)\leq\inf\limits_{u\in\mathcal{B}}F(u)$. Hence we only need to prove that $\inf\limits_{u\in\mathcal{A}}F(u)\geq\inf\limits_{u\in\mathcal{B}}F(u)$.
	
	Consider $\tilde{u}=\max\{\min\{u,1\},-1\}$, i.e. the cut-off of $u$ by 1 from above and by $-1$ from below. Then $\tilde{u}$ is also in $\mathcal{A}$ and satisfies that $F(\tilde{u})\leq F(u)$. So we only need to consider those $u\in\mathcal{A}$ such that $|u|\leq 1$.
	
	By Lemma \ref{lmm:approx}, for any $\epsilon>0$, there exists $u_1\in C^{\infty}(\Omega)$ and $M>0$ such that $u_1=\eta$ on $|x|>M$ and 
	\begin{align*}
		F(u)>F(u_1)-\epsilon.
	\end{align*}
	Then according to the definition of $\mathcal{A}_I$ in \eqref{def:setAI}, we know that $u_1\in\mathcal{A}_I$. By Proposition \ref{prop:finiteexistence}, we know that $F(u_1)\geq F(u_M)$ where $u_M$ is a minimizer of $F$ on $\mathcal{A}_I$ satisfying that $u_M$ is a 1D profile and increasing in $x$ direction. Therefore
	\begin{align*}
	F(u)\geq F(u_1)-\epsilon\geq F(u_M)-\epsilon.
	\end{align*}   
	By the translation-invariant property (Lemma \ref{lmm:translationinvariant}), we have 
	\begin{align*}
		F(u_M)=F(u^*_M)
	\end{align*}
	where $u^*_M$ is a translation of $u_M$ that crosses $(0,0)$, i.e.
	\begin{align*}
		u^*_M(x)=u_M(x+c),\ u^*_M(0)=0.
	\end{align*}
	By definition, we know that $u^*_M\in\mathcal{B}$. Thus for any $u\in\mathcal{A}$ and $\epsilon>0$, there exists $u^*_M\in\mathcal{B}$ such that
	\begin{align*}
	F(u)\geq F(u)-\epsilon\geq F(u_M^*)-\epsilon.
	\end{align*}
	Thus $\inf\limits_{u\in\mathcal{A}}F(u)\geq\inf\limits_{u\in\mathcal{B}}F(u)-\epsilon$. By arbitrariness of $\epsilon$, we have $ \inf\limits_{u\in\mathcal{A}}F(u)\geq\inf\limits_{u\in\mathcal{B}}F(u)$.
	
	Now we prove (ii). By Lemma \ref{lmm:l2bound}, for any $u\in\mathcal{B}$, there exist $C_1$ and $C_2$ that only depend on $\gamma$ such that 
	\begin{align*}
	\int_{\Omega_d}\gamma(u)\mathrm{d}x\mathrm{d}\b{y}\geq  C_1\|v\|_{L^2(\Omega_d)}^2-C_2
	\end{align*}
	where $v=u-\eta$. Therefore, using the expression of $F$ in \eqref{def:redefine2}, the Cauchy-Schwartz inequality, Lemma \ref{lmm:propertyL3} and Lemma \ref{lmm:propertyofL2}, we have
	\begin{align*}
	F(u)
	&=\dfrac{1}{2}\int_{\Omega_d}\int_{\Omega_d}|v(\b{w})-v(\b{w}')|^2K(\b{w}-\b{w}')\mathrm{d}\b{w}\mathrm{d}\b{w}'+{2c_{\mathcal{L}}}\int_{\Omega_d}v(x,\b{y}){(-\p_{xx})^{1/2}}\eta(x)\mathrm{d}x\mathrm{d}\b{y}\\
	&+\int_{\Omega_d}\gamma(u(x,\b{y}))\mathrm{d}x\mathrm{d}\b{y}\\
	&\geq \dfrac{c_2}{2}\|v\|^{2}_{\dot{H}^{1/2}(\Omega_d)}+C_1\|v\|_{L^2}^2-C_2-{\dfrac{2c_{\mathcal{L}}^2}{C_1}}\|(-\p_{xx})^{1/2}\eta(x)\|^2_{L^2(\Omega_d)}-\dfrac{C_1}{2} \|v\|_{L^2}^2\nonumber\\
	&\geq C_3\|v\|^2_{{H}^{1/2}(\Omega_d)}-C_4. 
	\end{align*}
	Here $c_2$ is the constant in Lemma \ref{lmm:propertyL3}, $c_{\mathcal{L}}$ is the constant in Lemma \ref{lmm:propertyofL2}, and
	\begin{align*}
		C_3=\min\left\{\dfrac{c_2}{2},\dfrac{C_1}{2}\right\},C_4=C_2+{\dfrac{2c_{\mathcal{L}}^2}{C_1}}\|(-\p_{xx})^{1/2}\eta(x)\|^2_{L^2(\Omega_d)}
	\end{align*} 
	are constants that only depend on $\eta$, $\gamma$ and the operator $\mathcal{L}$. This concludes the proof. 
\end{proof}
Lemma \ref{lmm:energylowerbound} in fact provides insightful corollaries: first, we have 
\begin{align*}
	\inf\limits_{u\in\mathcal{A}}F(u)=\inf\limits_{u\in\mathcal{B}}F(u)\geq -C_4.
\end{align*}
Thus $F$ is lower bounded in $\mathcal{A}$. Moreover, according to (ii), functional $F(u)$ can be used to bound $H^{1/2}(\Omega_d)$ norm of $u-\eta$ for any $u\in\mathcal{B}$. In the proof of Theorem \ref{thm:existence}, this observation will be used to find an a.e. limit of the minimizing sequence which is proved to be a minimizer of $F$ on $\mathcal{A}$. Now we are ready to prove Theorem \ref{thm:existence}.
\begin{proof}[Proof of Theorem \ref{thm:existence}]
We will first prove (i). By Lemma \ref{lmm:energylowerbound}, we know that
\begin{align*}
	\inf\limits_{u\in\mathcal{A}}F(u)=\inf\limits_{u\in\mathcal{B}}F(u)\geq -C_4>-\infty.
\end{align*}
Denote $c=\inf\limits_{u\in\mathcal{A}}F(u)=\inf\limits_{u\in\mathcal{B}}F(u)$. Then there exists $\{u_n\}\subset\mathcal{B}$ such that $F(u_n)\to c$ as $n\to\infty$. Again by Lemma \ref{lmm:energylowerbound} (ii), we know that $\|u_n-\eta\|_{H^{1/2}(\Omega_d)}$ is uniformly bounded. Thus there exists $u^*$ such that up to a subsequence,
\begin{align*}
	u_n-\eta&\to u^*-\eta\quad\mathrm{a.e.}\ \mathrm{in}\ \Omega_d,\\
	u_n-\eta&\rightharpoonup u^*-\eta\quad\mathrm{in}\   H^{1/2}(\Omega_d).
\end{align*}
Denote $v_n=u_n-\eta$ and $v^*=u^*-\eta$. Then $v_n\to v^*$ a.e. in $\Omega_d$ and $v_n\rightharpoonup v^*$ in $H^{1/2}(\Omega_d)$.

In fact, $u^*$ is a minimizer of $F$ on $\mathcal{A}$. By Fatou's lemma, we know that
\begin{align}\label{ineq:fatou}
	\liminf\limits_{n\to\infty}\dfrac{1}{2}\int_{\Omega_d}\int_{\Omega_d}{|(v_n(x,\b{y})-v_n(x',\b{y}')|^2}K(x-x',\b{y}-\b{y}')\mathrm{d}x\mathrm{d}\b{y}\mathrm{d}x'\mathrm{d}\b{y}'+\int_{\Omega_d}\gamma(u_n(x,\b{y}))\mathrm{d}x\mathrm{d}\b{y}\nonumber\\
	\geq  \dfrac{1}{2}\int_{\Omega_d}\int_{\Omega_d}{|(v^*(x,\b{y})-v^*(x',\b{y}')|^2}K(x-x',\b{y}-\b{y}')\mathrm{d}x\mathrm{d}\b{y}\mathrm{d}x'\mathrm{d}\b{y}'+\int_{\Omega_d}\gamma(u^*(x,\b{y}))\mathrm{d}x\mathrm{d}\b{y}.
\end{align} 
Meanwhile, since $v_n\to v^*$ weakly in $H^{1/2}(\Omega_d)$, hence also converges weakly in $L^2(\Omega_d)$, thus
\begin{align}\label{eq:weakl2}
	\lim\limits_{n\to\infty}\int_{\Omega_d}v_n(x,\b{y})(-\p_{xx})^{1/2}\eta(x)\mathrm{d}x\mathrm{d}\b{y}=\int_{\Omega_d}v^*(x,\b{y})(-\p_{xx})^{1/2}\eta(x)\mathrm{d}x\mathrm{d}\b{y}.
\end{align}
Substituting \eqref{ineq:fatou} and \eqref{eq:weakl2} into the following equality, we have
\begin{align*}
c &= \liminf\limits_{n\to\infty}F(u_n)\\
&=\liminf\limits_{n\to\infty}\dfrac{1}{2}\int_{\Omega_d}\int_{\Omega_d}{|(v_n(x,\b{y})-v_n(x',\b{y}')|^2}K(x-x',\b{y}-\b{y}')\mathrm{d}x\mathrm{d}\b{y}\mathrm{d}x'\mathrm{d}\b{y}'\\
&+{2c_{\mathcal{L}}}\int_{\Omega_d}v_n(x,\b{y})(-\p_{xx})^{1/2}\eta(x)\mathrm{d}x\mathrm{d}\b{y}+\int_{\Omega_d}\gamma(u_n(x,\b{y}))\mathrm{d}x\mathrm{d}\b{y}\\
&\geq \dfrac{1}{2}\int_{\Omega_d}\int_{\Omega_d}{|(v^*(x,\b{y})-v^*(x',\b{y}')|^2}K(x-x',\b{y}-\b{y}')\mathrm{d}x\mathrm{d}\b{y}\mathrm{d}x'\mathrm{d}\b{y}'\\
&+{2c_{\mathcal{L}}}\int_{\Omega_d}v^*(x,\b{y})(-\p_{xx})^{1/2}\eta(x)\mathrm{d}x\mathrm{d}\b{y}+\int_{\Omega_d}\gamma(u^*(x,\b{y}))\mathrm{d}x\mathrm{d}\b{y}\\
&=F(u^*).
\end{align*}
So $u^*$ is in fact a minimizer of $F$ on $\mathcal{A}$. In particular, it is a weak solution to \eqref{eq:generalized1d}. This proves (i).

Now we prove (ii). Because $u_n\in\mathcal{B}$, so $|u_n|\leq 1$ and they are all 1D functions and non-decreasing in $x$ direction, so the a.e. limit $u^*$ is also non-decreasing in $x$ direction and is a 1D profile satisfying $|u^*|\leq 1$. Thus \eqref{condition:key}
holds for almost every $(x,\b{y})\in\Omega_d$. To prove that $u^*-\eta\in H^1(\Omega_d)$, we first show that $\gamma'(u^*)\in L^2(\Omega)$. This is true by the mean value theorem and \eqref{condition:key}:
\begin{equation}\label{eq:L2solution}
	\begin{aligned}
	\int_{\Omega_d}|\gamma'(u^*)|^2\mathrm{d}x\mathrm{d}\b{y} &= \int_{\R}|\gamma'(\eta+u^*-\eta)|^2\mathrm{d}x\\
	&= \int_{\R}|\gamma'(\eta)+\gamma''(\eta+\theta(u^*-\eta))(u^*-\eta)|^2\mathrm{d}x\\
	&\leq 2\int_{\R}|\gamma'(\eta)|^2\mathrm{d}x+2\int_{\R}|\gamma''(\eta+\theta(u^*-\eta))|^2|u^*-\eta|^2\mathrm{d}x\\
	&\leq C_1'+C_2'\|v^*\|_{L^2}.
	\end{aligned}
\end{equation}
So $\gamma'(u^*)\in L^2(\Omega_d)$. Remember that $\mathcal{L}u^*+\gamma'(u^*)=0$, so by Lemma \ref{lmm:propertyL3}, we have
\begin{align*}
	c\|u^*\|_{\dot{H}^1(\Omega_d)}\leq\|\mathcal{L}u^*\|_{L^2(\Omega_d)}=\|\gamma'(u^*)\|_{L^2(\Omega_d)}.
\end{align*}
So $u^*\in\dot{H}^1(\Omega_d)$. Moreover, $\mathcal{L}\eta\in L^2(\Omega_d)$ by Lemma \ref{lmm:propertyofL2}, so $u^*-\eta\in H^1(\Omega_d)$. In particular, $u^*$ solves equation \eqref{eq:reduced1d} in $L^2$ sense. By \eqref{condition:key}, we know that $u^*$ is a 1D profile, so $u^*-\eta\in H^1(\Omega_d)$ implies that $\lim\limits_{x\to\pm\infty}u^*(x,\b{y})=\pm 1$ holds uniformly in $\b{y}$. So (ii) holds. 

Finally, we prove (iii) and (iv). Boundedness of $u^*$ implies that $\gamma''(u^*)\nabla u^*\in L^2(\Omega_d)$. Thus $\mathcal{L}u^*=-\gamma'(u^*)\in H^1(\Omega_d)$ and $u^*\in\dot{H}^2(\Omega_d)$. Remember that $u^*$ is a 1D profile, so by embedding $H^2(\R)\subset C^1(\R)$, we know that \eqref{condition:key} implies that 
\begin{align*}
	\nabla_{\b{y}}u^*(x,\b y)=0,\ \dfrac{\p u^*(x,\b y)}{\p x}\geq 0
\end{align*}
holds for any $(x,\b y)\in\Omega_d$. So (iv) is proved and (iii) is partially proved except the strict monotonicity.

To prove the strict monotonicity, suppose that $\dfrac{\p u^*(x_0,\b{y}_0)}{\p x}=0$ for some $(x_0,\b{y}_0)\in\Omega_d$, taking derivative on both sides of \eqref{eq:generalized1d} yields 
\begin{align*}
	\mathcal{L}\dfrac{\p u^*(x_0,\b{y}_0)}{\p x} = -\gamma''(u^*)\dfrac{\p u^*(x_0,\b{y}_0)}{\p x}=0.
\end{align*} 
Thus $\mathcal{L}\dfrac{\p u^*}{\p x}=0$ at $(x_0,\b{y}_0)$. However, since $\dfrac{\p u^*}{\p x}\geq 0$, we know that $\dfrac{\p u^*}{\p x}$ attains minimum at $(x_0,\b{y}_0)$. Then by Lemma \ref{lmm:max}, we know that $\dfrac{\p u^*}{\p x}=0$, i.e. $u^*$ is a constant. This contradicts with the far field limit of $u^*$. So $\dfrac{\p u^*(x,\b{y})}{\p x}>0$. This concludes the whole theorem.
\end{proof}

\section{The De Giorgi Conjecture and uniqueness of solutions}\label{sec:proofofdg}
In Theorem \ref{thm:existence}, we prove that there exists a minimizer $u^*$ of functional $F$ on set $\mathcal{A}$ who satisfies that $u^*-\eta\in H^1(\Omega_d)$ and for any $(x,\b{y})\in\Omega_d$, we have
\begin{align*}
	\nabla_{\b{y}}u(x,\b{y})=0,\ \dfrac{\p u^*(x,\b{y})}{\p x}>0.
\end{align*}
In particular, we have $\lim\limits_{x\to\pm\infty}u^*(x,\b{y})=\pm 1$. As Definition \ref{def:layersolutionwhole}, we keep the same definition of layer solutions for \eqref{eq:generalized1d}.
\begin{definition}\label{def:layersolution}
	We call that $u:\Omega_d\to\R$ is a layer solution to \eqref{eq:generalized1d}, i.e.
	\begin{align*}
	\mathcal{L}u+\gamma'(u)=0,
	\end{align*} 
	if for any $(x,\b{y})\in\Omega_d$, 
	\begin{align}\label{condition:layer}
	\dfrac{\p u(x,\b{y})}{\p x}>0,\ \lim\limits_{x\to\pm\infty}u(x,\b{y})=\pm 1.
	\end{align}
\end{definition}
As far as we know, results parallel to the De Giorgi conjecture that address the vectorial case, i.e. system \eqref{eq:EL}, and \eqref{eq:generalized1d} are still wanting and lack of exploration. In this section, we will prove Theorem \ref{thm:degiorgi} which fills in this blank: all layer solutions to \eqref{eq:EL} or \eqref{eq:generalized1d} with $H^1$ regularity are in fact 1D profiles if we further assume $\gamma\in C^{\infty}(\R)$.

In \cite{cabre2005layer} and related literatures on the De Giorgi conjecture, the standard approach to prove this type of symmetry result is to first derive some Schauder estimates for weak solutions and then using Liouville type theorems to prove 1D symmetry. For example, authors in \cite{cabre2005layer} first derived $C^{2,\alpha}$ regularity for layer solutions by careful application of theories on elliptic PDEs. Then they noticed the following lemma (see also Lemma 2.6 in \cite{cabre2005layer}), a Liouville type lemma:
\begin{lemma} (a Liouville type theorem)
	Let $\varphi\in L_{loc}^{\infty}(\overline{\R^d_+})$ be a positive function, not necessarily bounded on all of $\R^d_+$. Suppose that $\sigma\in H_{loc}^1(\overline{\R^d_+})$ satisfies
	\begin{align*}
	\begin{cases}
	-\sigma\mathrm{div}(\varphi^2\nabla\sigma)\leq 0&\ \mathrm{in\ }\mathbb{R}^d_+, \\
	\sigma\dfrac{\p\sigma}{\p n}\leq 0&\ \mathrm{on\ }\p\mathbb{R}^d_+
	\end{cases}
	\end{align*}
	in the weak sense. Assume that, for every $R>1$, we have
	\begin{align*}
		\int_{B^+_R}(\varphi\sigma)^2\mathrm{d}x\leq CR^2
	\end{align*}
	for some constant $C$ independent of $R$. Then $\sigma$ is a constant.
\end{lemma}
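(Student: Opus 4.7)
The plan is a Caccioppoli-plus-logarithmic-cutoff argument that is standard for Liouville theorems with quadratic volume growth. First I would test the differential inequality with $\sigma\zeta^2$, where $\zeta\in C_c^\infty(\overline{\R^d_+})$ is a nonnegative cutoff, integrate by parts, and keep track of the boundary integral. The interior term $-\sigma\,\mathrm{div}(\varphi^2\nabla\sigma)\le 0$ multiplied by $\zeta^2\ge 0$ yields, after integration by parts,
\begin{equation*}
\int_{\R^d_+}\varphi^2\nabla\sigma\cdot\nabla(\sigma\zeta^2)\,dx \;\le\; \int_{\partial\R^d_+}\zeta^2\varphi^2\sigma\,\frac{\p\sigma}{\p n}\,dS \;\le\; 0,
\end{equation*}
where the second inequality uses the boundary hypothesis $\sigma\,\p_n\sigma\le 0$. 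Expanding $\nabla(\sigma\zeta^2)=\zeta^2\nabla\sigma+2\sigma\zeta\nabla\zeta$ and applying Cauchy--Schwarz together with the elementary inequality $2ab\le a^2+b^2$ gives the Caccioppoli-type estimate
\begin{equation*}
\int_{\R^d_+}\zeta^2\varphi^2|\nabla\sigma|^2\,dx \;\le\; 4\int_{\R^d_+}\sigma^2\varphi^2|\nabla\zeta|^2\,dx.
\end{equation*}

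Next I would insert a logarithmic cutoff designed to exploit the quadratic growth of $\int_{B_R^+}(\varphi\sigma)^2$. For $R>e$, set $\zeta_R(x)=1$ for $|x|\le R$, $\zeta_R(x)=0$ for $|x|\ge R^2$, and $\zeta_R(x)=2-\log|x|/\log R$ in between, so that $|\nabla\zeta_R(x)|\le (|x|\log R)^{-1}$ on the annulus $\{R\le|x|\le R^2\}$. Then the Caccioppoli estimate becomes
\begin{equation*}
\int_{B_R^+}\varphi^2|\nabla\sigma|^2\,dx \;\le\; \frac{4}{(\log R)^2}\int_{\{R\le|x|\le R^2\}\cap\R^d_+}\frac{(\varphi\sigma)^2}{|x|^2}\,dx.
\end{equation*}

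I would then estimate the right-hand side by a dyadic decomposition. Splitting the annulus into shells $\{2^k\le|x|\le 2^{k+1}\}$ for $\log_2 R\le k\le 2\log_2 R$, and bounding $|x|^{-2}\le 4^{-k}$ on each shell, the growth hypothesis $\int_{B_{2^{k+1}}^+}(\varphi\sigma)^2\le C\,4^{k+1}$ yields a contribution of order $1$ per shell. Since the number of shells is $O(\log R)$, we obtain
\begin{equation*}
\int_{B_R^+}\varphi^2|\nabla\sigma|^2\,dx \;\le\; \frac{C'}{\log R} \;\xrightarrow{R\to\infty}\;0.
\end{equation*}
Letting $R\to\infty$ forces $\varphi\nabla\sigma\equiv 0$ almost everywhere in $\R^d_+$; positivity of $\varphi$ gives $\nabla\sigma\equiv 0$, so $\sigma$ is constant.

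The main obstacles are all at the ``make the weak formulation rigorous'' step. First, since $\varphi$ is only $L^\infty_{\mathrm{loc}}$ and $\sigma\in H^1_{\mathrm{loc}}$, one must check that $\sigma\zeta_R^2$ is an admissible test function against $\mathrm{div}(\varphi^2\nabla\sigma)$, which is immediate because $\zeta_R$ is compactly supported in $\overline{\R^d_+}$ and $\varphi\in L^\infty$ on that support. Second, the boundary trace $\sigma\p_n\sigma\le 0$ must be interpreted in the correct weak sense (as a distributional inequality extracted from the global weak formulation of $-\mathrm{div}(\varphi^2\nabla\sigma)\le 0$ with Neumann-type boundary data); this is the delicate point, but it is exactly the one baked into the hypotheses, so one only needs to invoke the weak formulation directly rather than trace theorems. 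Once these checks are in place, the dyadic bookkeeping is routine and the conclusion follows.
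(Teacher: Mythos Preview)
The paper does not supply a proof of this lemma at all: it is quoted verbatim as Lemma~2.6 of Cabr\'e--Sol\`a-Morales and used only to explain the \emph{contrast} with the spectral approach the authors actually take (``Instead of adopting any Liouville type theorem\ldots we will develop a new approach''). So there is no ``paper's own proof'' to compare against.

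Your argument is the standard and correct one, and it is essentially the proof given in the cited reference: test the weak inequality with a nonnegative cutoff $\zeta^2$ to obtain the Caccioppoli estimate $\int \zeta^2\varphi^2|\nabla\sigma|^2\le 4\int (\varphi\sigma)^2|\nabla\zeta|^2$, then choose the logarithmic cutoff on the annulus $\{R\le|x|\le R^2\}$ and estimate the right-hand side by dyadic shells using the quadratic growth hypothesis. The only point worth sharpening is the phrasing of the first step: one does not literally use $\sigma\zeta^2$ as a nonnegative test function (it need not be nonnegative); rather, the weak meaning of the two hypotheses together is precisely that $\int_{\R^d_+}\varphi^2\nabla\sigma\cdot\nabla(\sigma\xi)\le 0$ for every nonnegative Lipschitz $\xi$ with compact support in $\overline{\R^d_+}$, and you then take $\xi=\zeta^2$. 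With that clarification your proof is complete.
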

Applying this lemma to function $\sigma=u_{y_i}/u_{x},\ i=1,2,...,d-1$, where $x$ direction is the monotone direction for the layer solution $u$, they proved the following lemma (see also Lemma 4.2 in \cite{cabre2005layer}): 
\begin{lemma}\label{lmm:insightful}
	Suppose that $\gamma\in C^{2,\alpha}(\R)$ is a double-well potential satisfying $\eqref{condition:potential}$. Assume that $d\leq 3$ and that $u$ is a bounded solution of 
	\begin{align*}
	\begin{cases}
	\Delta u=0&\ \mathrm{in\ }\mathbb{R}^d_+, \\
	\dfrac{\p u}{\p n}=\gamma'(u)&\ \mathrm{on\ }\p\mathbb{R}^d_+.
	\end{cases}
	\end{align*}
	Then there exists a function $\varphi\in C_{\loc}^1(\overline{\R^d_+})\bigcap C^2(\R^d_+)$ with ${\varphi>0}$ in $\overline{\R^d_+}$ and such that for every $i=1,2,...,d-1$,
	\begin{align*}
		\dfrac{\p u}{\p y_i}=c_i\varphi\quad \mathrm{in\ }\R^d_+
	\end{align*}
	for some constant $c_i$.
\end{lemma}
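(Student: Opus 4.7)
The strategy is to reduce the statement to the preceding Liouville-type lemma by exhibiting a single positive ``Jacobi field'' $\varphi$ that solves the same linearized equation satisfied by each $w_i := \p u/\p y_i$; the conclusion then follows by applying the lemma to the ratios $\sigma_i := w_i/\varphi$.

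First I would boost the regularity of $u$: since $u$ is bounded and $\gamma\in C^{2,\alpha}$, Schauder theory for the harmonic Neumann problem with nonlinear boundary data gives $u\in C^{2,\alpha}_{\loc}(\overline{\R^d_+})\cap C^{3,\alpha}_{\loc}(\R^d_+)$, which legitimizes the manipulations below. Differentiating both the bulk equation and the boundary condition with respect to $y_i$ then shows that each $w_i$ solves the linearized system
\begin{align*}
\Delta w_i = 0 \ \text{in}\ \R^d_+, \qquad \p w_i/\p n = \gamma''(u)\,w_i \ \text{on}\ \p\R^d_+.
\end{align*}
So all the transverse derivatives lie in the kernel of one and the same linear operator.

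Second, I would exhibit $\varphi\in C^1_{\loc}(\overline{\R^d_+})\cap C^2(\R^d_+)$ with $\varphi>0$ solving this same linearized system. The natural candidate is $\varphi = \p u/\p x_k$ for a boundary direction $x_k$ along which $u$ is strictly monotone; such a direction is available because, under the bounded/double-well hypothesis, the solution is (at least) semi-stable and, in low dimensions, possesses a monotone direction after possibly rotating coordinates. This $\varphi$ automatically satisfies the linearized system, and applying a Hopf-type strong maximum principle to $\varphi\geq 0$ upgrades it to $\varphi>0$ on all of $\overline{\R^d_+}$. Third, I would set $\sigma_i := w_i/\varphi$ and verify by a direct computation, using that both $w_i$ and $\varphi$ solve the linearized system, that
\begin{align*}
\mathrm{div}(\varphi^2\nabla \sigma_i) = 0 \ \text{in}\ \R^d_+, \qquad \varphi^2\,\p \sigma_i/\p n = 0 \ \text{on}\ \p\R^d_+,
\end{align*}
which is strictly stronger than the sign hypotheses in the Liouville lemma. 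A Caccioppoli-type argument using a standard radial cutoff, together with the boundedness of $u$ (hence of $w_i$ by the regularity step), delivers the growth bound $\int_{B^+_R}(\varphi\sigma_i)^2\,\ud x\leq CR^2$; it is precisely this energy bound that forces the dimension restriction $d\leq 3$. The Liouville lemma then forces $\sigma_i$ to be constant, i.e.\ $\p u/\p y_i = c_i\,\varphi$, which is the desired conclusion.

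The hard part will be the second step, the construction of a genuinely positive $\varphi$. One must simultaneously (i) land in the kernel of the linearized operator, (ii) retain strict positivity up to the boundary, and (iii) keep enough control to make the quadratic energy bound of step three work. Proceeding through $\varphi = \p u/\p x_k$ requires some form of monotonicity of $u$; proceeding through a positive eigenfunction extracted from exhausting half-balls requires semi-stability. Either path concentrates the full analytic weight of the problem, including the dimension restriction $d\leq 3$, into this single construction.
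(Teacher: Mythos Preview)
Your proposal is correct and matches the paper's approach exactly; indeed the paper does not prove this lemma itself but only cites it from Cabr\'e--Sol\`a-Morales with the one-line sketch ``applying this lemma to $\sigma=u_{y_i}/u_x$'', which is precisely your plan of taking $\varphi=u_{x_k}$ and feeding $\sigma_i=w_i/\varphi$ into the Liouville lemma. The only place worth sharpening is the growth bound: since $\varphi\sigma_i=u_{y_i}$, mere boundedness of $\nabla u$ yields $\int_{B_R^+}|u_{y_i}|^2\le CR^d$, which is not good enough when $d=3$; what is actually used is the energy estimate $\int_{B_R^+}|\nabla u|^2\le CR^{d-1}$ for bounded solutions of this Neumann problem, and \emph{that} is where the restriction $d\le 3$ genuinely enters.
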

As a straightforward corollary, the one-dimensional symmetry of solutions to $(-\Delta)^{1/2}u+\gamma'(u)=0$ is also established.

Instead of adopting any Liouville type theorem to prove Theorem \ref{thm:degiorgi}, we will develop a new approach that is first utilized in our previous work \cite{gao2019long} to prove 1D symmetry of layer solutions to \eqref{eq:reduced1d}. Although Liouville type theorem is not employed, we found that the insightful observation provided by Lemma \ref{lmm:insightful} in \cite{cabre2005layer} is significant: as long as one can prove that there exist constants $c_i, i=1,2,...d-1$ such that 
\begin{align*}
	u_{y_i}=c_iu_{x}
\end{align*}
holds, then the profile $u$ is a 1D profile. Remember the discussion in Section \ref{sec:introduction}, $u_x$ and $u_{y_i},i=1,2,...,d-1$ are eigenfunctions of eigenvalue 0 for the linear operator 
\begin{align}
	L:H^1(\Omega_d)\subset L^2(\Omega_d)\to L^2(\Omega_d),\ L\phi=\mathcal{L}\phi+\gamma''(u)\phi.
\end{align} 
Therefore, as long as we can prove that 0 is a simple eigenvalue of $L$, i.e. the eigenspace of 0 is only 1 dimension, then we prove that $u_x$ and $u_{y_i},i=1,2,...,d-1$ are in fact linearly dependent, which indicates 1D symmetry. This is the main idea and approach we will utilize to prove Theorem \ref{thm:degiorgi}. Following this logic, we will first establish proper regularity results for layer solutions $u$ in Section \ref{sec:regularity} and then prove Theorem \ref{thm:degiorgi} in Section \ref{sec:DGproof}. 

\subsection{Regularity results}\label{sec:regularity}
In this section, we will derive some regularity results for layer solutions to equation \eqref{eq:generalized1d} and some properties of elements in the kernel of $L$. Two main results will be derived in this section under assumption $\gamma\in C^{\infty}(\R)$. First, any layer solution of equation \eqref{eq:generalized1d} is in $\dot{H}^n(\Omega_d)$ for any $n>0$ (see Lemma \ref{lmm:regularity}) and in particular, $u$ is smooth with bounded derivatives of any order. Second, eigenfunctions of $L$ with eigenvalue 0 are in $H^n(\Omega_d)$ for any $n>0$ and in particular, they decay to 0 uniformly in $\b{y}$ as $|x|\to\infty$ (see Lemma \ref{lmm:kerneldecay}). 

As a reminder, we assume $\gamma\in C^{\infty}(\R)$ in this section. Even though this is stronger than $C^2$ assumption which is generally considered, this setting indeed covers many important cases. For instance, $\gamma(u)=\dfrac{1}{\pi^2}(\cos(\pi u)+1)$ in the PN model and $\gamma(u)=(1-u^2)^2$ in the Allen-Cahn equation \cite{allen1972ground}. 

Now we begin to prove these two lemmas. All these lemmas only require that $u$ is bounded which is ensured by being a layer solution. Using the Gagliardo-Nirenberg interpolation inequality \cite{nirenberg2011elliptic} and ideas in \cite{michael1999partial} (see Proposition 3.9), we will prove that: 
\begin{lemma}\label{lmm:regularity} Suppose that $\gamma\in C^{\infty}(\R)$ is a double-well potential satisfying \eqref{condition:potential} and $\mathcal{L}$ is the linear operator defined in \eqref{def:convolutionL} satisfying assumption \eqref{A}, \eqref{B}, \eqref{C} and \eqref{D}. For any dimension $d\geq 1$, if $u\in\dot{H}^1(\Omega_d)$ satisfying $u-\eta\in H^1(\Omega_d)$ is a bounded solution to equation \eqref{eq:generalized1d}, i.e.
\begin{align*}
	\mathcal{L}u+\gamma'(u)=0,
\end{align*}
then $u-\eta\in{H}^n(\Omega_d)$ for any $n>0$. In particular, $u$ is in $\dot{H}^{n}(\Omega)$ for any $n>0$ and smooth with bounded derivatives of any order.
\end{lemma}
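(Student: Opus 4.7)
The plan is to bootstrap regularity by iteratively applying the equation $\mathcal{L}u = -\gamma'(u)$, using the fact that $\mathcal{L}$ is a pseudo-differential operator of order one to gain one derivative at each step. Set $v := u-\eta$, so $v\in H^1(\Omega_d)$ and the equation becomes $\mathcal{L}v = -\gamma'(u) - \mathcal{L}\eta$. By Lemma \ref{lmm:propertyofL2}, $\mathcal{L}\eta = c_{\mathcal{L}}(-\p_{xx})^{1/2}\eta$, and since $\eta'\in C_c^\infty(\R)$ and $(-\p_{xx})^{1/2}\eta$ is the Hilbert transform of $\eta'$, one checks that $\mathcal{L}\eta$ is smooth with all derivatives of order $\geq 1$ lying in $L^2(\Omega_d)$. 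The proof therefore reduces to showing, by induction on $n\geq 1$, that $v\in H^n(\Omega_d)$.

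The core analytic input is the following strengthening of Lemma \ref{lmm:propertyL3}, immediate from Assumption \eqref{A} together with Plancherel: for every $s\geq 0$,
\begin{equation*}
c\,\|w\|_{\dot H^{s+1}(\Omega_d)} \;\leq\; \|\mathcal{L}w\|_{\dot H^s(\Omega_d)} \;\leq\; C\,\|w\|_{\dot H^{s+1}(\Omega_d)},
\end{equation*}
because the symbol satisfies $c|\bm\nu|\leq \sigma_{\mathcal L}(\bm\nu)\leq C|\bm\nu|$. Hence, given the inductive hypothesis $v\in H^n(\Omega_d)$, to deduce $v\in H^{n+1}(\Omega_d)$ it suffices to show $\gamma'(u)\in H^n(\Omega_d)$ and then read off $\|v\|_{\dot H^{n+1}}\lesssim\|\mathcal{L}v\|_{\dot H^n}\leq \|\gamma'(u)\|_{\dot H^n}+\|\mathcal{L}\eta\|_{\dot H^n}$. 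The $L^2$ part of $\gamma'(u)$ is easy: since $\gamma'(\pm 1)=0$ and $\gamma''$ is bounded on the range of $u$, a mean-value argument gives $|\gamma'(u)-\gamma'(\eta)|\leq C|v|$, so $\gamma'(u)-\gamma'(\eta)\in L^2$, while $\gamma'(\eta)$ is compactly supported in $x$ and hence in $L^2(\Omega_d)$.

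The main obstacle is controlling $\|\gamma'(u)\|_{\dot H^n}$ via the chain rule. By Fa\`a di Bruno, any derivative $\p^\alpha \gamma'(u)$ with $|\alpha|=n$ is a finite sum of terms of the form $\gamma^{(k+1)}(u)\prod_{i=1}^{k}\p^{\alpha_i}u$, where $\alpha_1+\dots+\alpha_k=\alpha$ and each $|\alpha_i|\geq 1$. Since $u$ is bounded and $\gamma\in C^\infty$, the prefactor $\gamma^{(k+1)}(u)$ is bounded. The product is controlled by the Gagliardo--Nirenberg interpolation inequality on $\Omega_d$: for each $i$,
\begin{equation*}
\|\p^{\alpha_i}u\|_{L^{2n/|\alpha_i|}(\Omega_d)}\;\lesssim\;\|u\|_{L^\infty(\Omega_d)}^{1-|\alpha_i|/n}\,\bigl(\|v\|_{\dot H^n(\Omega_d)}+\|\eta\|_{\dot H^n_x}\bigr)^{|\alpha_i|/n},
\end{equation*}
and H\"older's inequality with exponents $p_i=2n/|\alpha_i|$ (which satisfy $\sum p_i^{-1}=1/2$) gives $\prod_i\p^{\alpha_i}u\in L^2(\Omega_d)$. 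Combined with boundedness of $u$ this yields $\gamma'(u)\in \dot H^n(\Omega_d)$, closing the induction. One minor bookkeeping point is splitting $u=v+\eta$ when individual derivative factors appear, which is routine since all derivatives of $\eta$ of order $\geq 1$ are bounded and compactly supported in $x$.

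Having established $v\in H^n(\Omega_d)$ for every integer $n\geq 1$, Sobolev embedding on $\Omega_d=\R\times\T^{d-1}$ implies $v\in C^k(\Omega_d)$ for every $k$ with all derivatives decaying at infinity, so $u=v+\eta$ is smooth with bounded derivatives of any order, and in particular $u\in \dot H^n(\Omega_d)$ for all $n>0$. The bootstrap is the whole game; the obstacle is purely the nonlinear composition estimate, which is handled once Gagliardo--Nirenberg together with $u\in L^\infty$ is invoked, exactly as in the classical argument of \cite{michael1999partial}.
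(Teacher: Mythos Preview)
Your proposal is correct and follows essentially the same route as the paper: bootstrap by induction, expand $D^{\alpha}\gamma'(u)$ via Fa\`a di Bruno, control each factor $\partial^{\alpha_i}u$ in $L^{2n/|\alpha_i|}$ by Gagliardo--Nirenberg interpolation against $\|u\|_{L^\infty}$, and combine with H\"older to place the product in $L^2$, then use that $\mathcal{L}$ is order one to gain a derivative. The only cosmetic difference is that you subtract $\eta$ at the outset and carry $\mathcal{L}\eta$ as a separate forcing term, whereas the paper differentiates the equation directly and applies Gagliardo--Nirenberg to $u-\eta$; the underlying estimates are the same.
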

\begin{proof}
Taking derivative on both sides of the equation yields 
\begin{align*}
\mathcal{L}u_x+\gamma''(u)u_x=0.
\end{align*}
Remember that $u$ is bounded, so is $\gamma''(u)$ by continuity of $\gamma''$. Thus $\gamma''(u)u_x\in L^2(\Omega_d)$ which implies that $\mathcal{L}u_x\in L^2(\Omega_d)$. Thus by Lemma \ref{lmm:propertyL3}, $u_x\in H^1(\Omega_d)$. This also holds for $\nabla_{\b{y}}u(x,\b{y})$. Thus $u\in\dot{H}^2(\Omega_d)$ and $u-\eta\in H^2(\Omega_d)$.

Now we prove by induction that $u-\eta\in{H}^n(\Omega_d)$ for any positive integer $n\geq 3$. Suppose that  $u-\eta\in{H}^m(\Omega_d)$, then
by the Gagliardo-Nirenberg interpolation inequality, we know that for any $1\leq j\leq m$, we have 
\begin{align*}
	\|D^j(u-\eta)\|_{L^p(\Omega)}\leq C\|(u-\eta)\|^{a}_{H^m(\Omega)}\|u-\eta\|_{L^{\infty}(\Omega)}^{1-a}
\end{align*}
Here $p$ and $j/m\leq\alpha\leq 1$ satisfy
\begin{align*}
	\dfrac{1}{p}=\dfrac{j}{d}+a\left(\dfrac{1}{2}-\dfrac{m}{d}\right).
\end{align*}
Take $\alpha =j/m$, then we have $p=2m/j$ and 
\begin{align*}
	D^{j}(u-\eta)\in L^{2m/j}(\Omega_d).
\end{align*}
Notice that $D\eta=0$ if $|x|>1$ and $\eta\in C^{\infty}(\Omega_d)$, so $D^j\eta\in L^{2m/j}(\Omega_d)$, hence
\begin{align*}
	D^ju\in L^{2m/j}(\Omega_d)
\end{align*}
Chain rule implies that for any multi-index $\alpha$ that satisfies $|\alpha|=m$, we have
\begin{align*}
	D^{\alpha}\gamma'(u)=\sum_{\beta_1+...+\beta_k=\alpha}C_{\beta}u^{(\beta_1)}u^{(\beta_2)}...u^{(\beta_k)}\gamma^{(k+1)}(u).
\end{align*}
Here $C_{\beta}$ are constants depending on $\beta=(\beta_1,\beta_2,...,\beta_k)$. Boundedness of $u$ and smoothness of $\gamma$ ensure that $\gamma^{(k+1)}(u)$ is also bounded. Remember that $D^ju\in L^{2m/j}(\Omega_d)$, so we have $u^{(\beta_j)}\in L^{2m/|\beta_j|}(\Omega_d)$ for all $j=1,2,...,k$. Thus by H\"older's inequality, we have
\begin{align*}
\|u^{(\beta_1)}u^{(\beta_2)}...u^{(\beta_k)}\|_{L^q(\Omega_d)}\leq\prod_{j=1}^k\|u^{(\beta_j)}\|_{L^{2m/|\beta_j|}(\Omega_d)}
\end{align*}
where $q$ satisfies
\begin{align*}
	\dfrac{1}{q}=\sum_{j=1}^k\dfrac{|\beta_j|}{2m}=\dfrac{|\alpha|}{2m}=\dfrac{1}{2}.
\end{align*}
Thus $q=2$ and $u^{(\beta_1)}u^{(\beta_2)}...u^{(\beta_k)}\in L^2(\Omega_d)$. Thus $D^{\alpha}\gamma'(u)\in L^2(\Omega_d)$ for any multi-index $\alpha$ that satisfies $|\alpha|=m$, so $D^m\gamma'(u)\in L^2(\Omega_d)$. Therefore,
\begin{align*}
	\|D^m(\mathcal{L}u)\|_{L^2(\Omega_d)}=\|D^m\gamma'(u)\|_{L^2(\Omega_d)}<\infty.
\end{align*}
Thus $D^m(\mathcal{L}u)\in L^2(\Omega_d)$. Then by Lemma \ref{lmm:propertyL3} and Assumption \eqref{A}, we have $u-\eta\in H^{m+1}(\Omega_d)$ and $u\in\dot{H}^{m+1}(\Omega_d)$. Thus by induction, $u-\eta\in H^n(\Omega_d)$ for any $n>0$. In particular, this indicates that $u$ is smooth with bounded derivatives of any order.
\end{proof}

Remember that $\gamma$ is smooth, so Lemma \ref{lmm:regularity} also ensures that $\gamma''(u)$ is smooth and bounded with bounded derivatives of any order. Recall that $u_x$ is a 0 eigenfunction of operator $L$ defined in \eqref{def:operatorL}, so by ellipticity of $\mathcal{L}$ and regularity of $\gamma''(u)$, we can prove that $u_x$, or more generally, any 0 eigenfunction of $L$ should attain $H^k(\Omega_d)$ regularity for any $k>0$. As a direct corollary of Lemma \ref{lmm:regularity}, we have

\begin{lemma}\label{lmm:kerneldecay} Suppose that $\gamma\in C^{\infty}(\R)$ is a double-well potential {satisfying} \eqref{condition:potential} and $\mathcal{L}$ is the linear operator defined in \eqref{def:convolutionL} satisfying assumption \eqref{assumption}. For any dimension $d\geq 1$, if $g\in H^1(\Omega_d)$ satisfies
	\begin{align*}
		\mathcal{L}g+\gamma''(u)g=0,
	\end{align*}
	where $u$ is a bounded solution of \eqref{eq:generalized1d} as in Lemma \ref{lmm:regularity}. Then $g\in H^n(\Omega_d)$ for any $n>0$. In particular, $g$ is smooth and 
	\begin{align*}
		\lim\limits_{|x|\to\infty}g(x,\b{y})=0
	\end{align*}
	holds uniformly in $\b{y}$.
\end{lemma}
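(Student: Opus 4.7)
The plan is to mirror the bootstrap argument already carried out for $u-\eta$ in Lemma \ref{lmm:regularity}, iteratively raising the Sobolev index of $g$, and then extract the uniform decay from $g \in L^2(\Omega_d)$ combined with the resulting uniform gradient bound.

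First I would run a regularity bootstrap. By Lemma \ref{lmm:regularity} applied to $u$, the coefficient $\gamma''(u)$ is smooth on $\Omega_d$ with every derivative bounded. Assume inductively that $g \in H^m(\Omega_d)$ for some $m \ge 1$. For any multi-index $\alpha$ with $|\alpha|\le m$, the Leibniz rule expands $D^\alpha(\gamma''(u)g)$ into a finite sum of terms $D^\beta(\gamma''(u))\cdot D^\gamma g$ with $\beta+\gamma=\alpha$; each prefactor is uniformly bounded and each $D^\gamma g$ lies in $L^2(\Omega_d)$, so $\gamma''(u)g\in H^m(\Omega_d)$ and the equation gives $\mathcal{L}g=-\gamma''(u)g\in H^m(\Omega_d)$. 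Because $\mathcal{L}$ is a Fourier multiplier it commutes with $D^\alpha$, so Lemma \ref{lmm:propertyL3} applied to $D^\alpha g$ yields $\|D^\alpha g\|_{\dot{H}^1(\Omega_d)}\le c_1^{-1}\|D^\alpha(\gamma''(u)g)\|_{L^2(\Omega_d)}<\infty$ for every $|\alpha|\le m$. Combined with $g\in L^2(\Omega_d)$ this gives $g\in H^{m+1}(\Omega_d)$, and induction closes the bootstrap: $g\in H^n(\Omega_d)$ for every $n>0$. Smoothness of $g$ with bounded derivatives of every order then follows from the Sobolev embedding $H^n(\Omega_d)\hookrightarrow C^k_b(\Omega_d)$ valid for $n>k+d/2$.

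For the uniform decay I would argue by contradiction. Suppose there exist $\epsilon_0>0$ and a sequence $(x_n,\b{y}_n)\in\Omega_d$ with $|x_n|\to\infty$ and $|g(x_n,\b{y}_n)|\ge\epsilon_0$. The uniform gradient bound $\|\nabla g\|_{L^\infty(\Omega_d)}\le C$ furnished by the previous step forces $|g|\ge\epsilon_0/2$ on the Euclidean ball of radius $r:=\epsilon_0/(2C)$ centered at $(x_n,\b{y}_n)$ (identifying a neighborhood of $\b{y}_n$ in $\T^{d-1}$ with a ball in $\R^{d-1}$). Passing to a subsequence with $|x_{n+1}-x_n|>2r$ makes these balls pairwise disjoint in $\Omega_d$, whence $\int_{\Omega_d}|g|^2\ge\sum_n (\epsilon_0/2)^2|B_r|=\infty$, contradicting $g\in L^2(\Omega_d)$. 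Since $\b{y}$ ranges over the compact torus $\T^{d-1}$, this yields $\lim_{|x|\to\infty}\sup_{\b{y}\in\T^{d-1}}|g(x,\b{y})|=0$, i.e., the uniform-in-$\b{y}$ decay asserted.

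I do not anticipate a serious obstacle. The bootstrap is in fact slightly easier than the one in Lemma \ref{lmm:regularity} since the nonlinearity has effectively been linearized: there is no chain-rule expansion of $\gamma'(u)$ to handle, only a linear multiplication by the smooth bounded coefficient $\gamma''(u)$, which removes the need for Gagliardo--Nirenberg interpolation. The only point requiring a bit of care is that the Sobolev embedding and the ball-packing contradiction must be carried out on the cylindrical domain $\R\times\T^{d-1}$ rather than $\R^d$, but since $\T^{d-1}$ is compact and locally Euclidean, both arguments proceed in the standard way.
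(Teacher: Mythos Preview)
Your proposal is correct and follows essentially the same approach as the paper: the bootstrap via the Leibniz rule and Lemma \ref{lmm:propertyL3} is identical to the paper's argument, and your explicit contradiction argument for the uniform decay simply spells out what the paper leaves implicit as a consequence of $g\in H^n(\Omega_d)$ for all $n$.
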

\begin{proof}
	By Lemma \ref{lmm:regularity}, we know that $\gamma''(u)$ is smooth with bounded derivatives of any order. Suppose that $g\in H^k(\Omega_d)$ for some $k\geq 1$, then
	\begin{align*}
		\|D^k(\mathcal{L}g)\|_{L^2(\Omega_d)}=\|D^k(\gamma''(u)g)\|_{L^2(\Omega_d)}\leq C_k\|D^kg\|_{L^2(\Omega_d)}<\infty.
	\end{align*}
	Here $C_k$ is a constant that only depends on $k,u$ and $\gamma$. Thus $D^k(\mathcal{L}g)\in L^2(\Omega_d)$, hence by Lemma \ref{lmm:propertyL3} and Assumption \eqref{A}, we know $g\in H^{k+1}(\Omega_d)$. So by induction, $g\in H^n(\Omega_d)$ for any $n>0$. In particular, $g$ is smooth and satisfies that $\lim\limits_{|x|\to\infty}g(x,\b{y})=0$ holds uniformly in $\b{y}$.
\end{proof}  
\begin{remark}
	Although we assume $\gamma\in C^{\infty}(\Omega_d)$, for a given dimension $d$, $\gamma\in C^{d+3}(\Omega_d)$ is sufficient to ensure that layer solutions $u$ and 0 eigenfunctions $g$ of operator $L$ are continuous and $\lim\limits_{|x|\to\infty}g(x,\b y)=0,\lim\limits_{x\to\pm\infty}u(x,\b y)=\pm 1$ hold uniformly in $\b y$. These are the properties we need to prove Theorem \ref{thm:degiorgi}.
\end{remark}

Finishing proving these two lemmas, we are ready to prove Theorem \ref{thm:degiorgi}.
\subsection{Proof of Theorem \ref{thm:degiorgi}: the De Giorgi conjecture}\label{sec:DGproof}
As discussed in the beginning of this section, we will prove Theorem \ref{thm:degiorgi} by proving that the $\mathrm{Ker}(L)$ is only 1 dimension. Here $L$ is the operator defined in \eqref{def:operatorL}. Similar to the proof in \cite{gao2019long}, Lemma \ref{lmm:max}, i.e. the maximal property plays a critical role in concluding linear dependence of $u_x$ and any other function $g$ in the $\mathrm{Ker}(L)$. 

\begin{proof}[Proof of Theorem \ref{thm:degiorgi}]
We will prove that if a non-trivial $g\in H^1(\Omega_d)$ satisfies $\mathcal{L}g+\gamma''(u)g=0$, then there exists a constant $c$ such that $g=cu_x(x,\b{y})$.

According to Lemma \ref{lmm:regularity}, $u\in \dot{H}^n(\Omega_d)$ for any $n>0$, so $u$ is continuous. By definition of layer solution (see Definition \ref{def:layersolution}), we know that  $\lim\limits_{x\to\pm\infty}u(x,\b{y})=\pm 1$. 

This limit actually holds uniformly in $\b{y}$ by continuity of $u$. To prove uniformness, by strict monotonicity of $u$, for any $a\in(-1,1)$ and $\b{y}\in\T^{d-1}$, there exists a unique $x\in\R$ such that $u(x,\b{y})=a$. Therefore, for any $a\in(-1,1)$, we consider function
\begin{align*}
	f_a(\b{y}):\T^{d-1}\to\R,\ f_a(\b{y})=\{x: u(x,\b{y})=a\}.
\end{align*}
We prove that $f_a(\b y)$ is continuous. Given $\b{y}\in\T^{d-1}$ and $\epsilon>0$ sufficiently small, since $u(f_a(\b{y}),\b{y})=a$, by strict monotonicity of $u$ w.r.t. $x$, we know that
\begin{align*}
	a_1:=u(f_a(\b{y})+\epsilon,\b{y})>a>a_2:=u(f_a(\b{y})-\epsilon,\b{y}).
\end{align*}
Then there exists $\delta>0$ such that
\begin{align*}
	u(x,\b{y})>a\ \mathrm{if}\ (x,\b{y})\in S_{\delta}(f_a(\bm{y})+\epsilon,\b{y}),\\
	u(x,\b{y})<a\ \mathrm{if}\ (x,\b{y})\in S_{\delta}(f_a(\bm{y})-\epsilon,\b{y}).
\end{align*}
Here $S_{\delta}(\b{w})$ is the square centered at $\b{w}$ with width $\delta$. Then by definition of $f_a$ and monotonicity of $u$, we know that for any ${\b{y}_1}\in\T^{d-1}$ such that $|{\b{y}_1-\b{y}}|<\dfrac{\delta}{2}$, we have $|f_a(\b{y}_1)-f_a(\b{y})|<\epsilon$. Thus $f_a(\b{y}):\T^{d-1}\to\R$ is a continuous function for any $a\in(-1,1)$. So by compactness of $\T^{d-1}$, there exist real numbers $x_a$ and $X_a$ such that
\begin{align*}
	x_a<f_a(\b{y})<X_a.
\end{align*}  
So by monotonicity, we know that for any $\b{y}\in\Omega_d$,
\begin{align*}
	u(x,\b{y})&\geq a\ \mathrm{if}\ x\geq X_a,\\
	u(x,\b{y})&\leq a\ \mathrm{if}\ x\leq x_a. 
\end{align*}
Thus limit $\lim\limits_{x\to\pm\infty}u(x,\b{y})=\pm 1$ holds uniformly in $\b{y}$. 

By Lemma \ref{lmm:kerneldecay}, we know that 
\begin{align*}
	\lim\limits_{x\to\pm\infty}u_x(x,\b{y})=0,\ \lim\limits_{x\to\pm\infty}g(x,\b{y})=0
\end{align*}
hold uniformly in $\b{y}$. Consider $\phi_{\beta}=u_x+\beta g$ and define set
\begin{align}\label{def:setD1}
D_1:=\{\beta<0:\ \phi_{\beta}(\b{\xi})<0\ \mathrm{for\  some}\ \b{\xi}\in\Omega_d\}.
\end{align}
Because $g$ is non-trivial, we assume that $g(x_0,\b{y}_0)>0$ for some $(x_0,\b{y}_0)\in\Omega_d$ without loss of generality. Then $D_1$ is non-empty because 
\begin{align*}
	\beta_1:=-2u_x(x_0,\b{y}_0)/g(x_0,\b{y}_0)\in D_1.
\end{align*} 
Here we use the positivity of $u_x$ in the definition of layer solutions. Therefore, 
\begin{align*}
	\overline{\beta}:=\sup D_1
\end{align*}
is well-defined and satisfies $\overline{\beta}\in[\beta_1,0]$.

We can also prove that for any $\beta\in D_1$, there exists $\b{\xi}_{\beta}\in\Omega_d$ such that $\phi_{\beta}(\b{\xi}_\beta)$ attains a negative minimum. By construction of $D_1$ and Lemma \ref{lmm:kerneldecay}, we know that
\begin{align*}
	\lim\limits_{|x|\to\infty}\phi_\beta(x,\b{y})=0
\end{align*}
holds uniformly in $\b{y}$. Meanwhile, $\phi_{\beta}$ attains a negative minimum. Therefore, there exists $\b{\xi}_\beta=(x_{\beta},\b{y}_{\beta})$ such that $\phi_{\beta}$ attains minimum at $\b{\xi}_{\beta}$ by continuity of $g$ and $u_x$, which is ensured by Lemma \ref{lmm:kerneldecay} and Lemma \ref{lmm:regularity}. 

Moreover, there exists $X_0\in\R$ that only depends on $\gamma$ and $u$ such that $|x_\beta|\leq X_0$ for any $\beta\in D_1$. Notice that $\phi_{\beta}$ satisfies
$\mathcal{L}\phi_{\beta}+\gamma''(u)\phi_{\beta}=0$ since both $g$ and $u_x$ are so, thus 
\begin{align*}
	\gamma''(u(x_\beta,\b{y}_\beta))\phi_{\beta}(x_\beta,\b{y}_\beta)=-\mathcal{L}\phi_{\beta}|_{(x_\beta,\b{y}_\beta)}>0
\end{align*} 
holds by minimality of $\phi_{\beta}$ and Lemma \ref{lmm:max}. Because $\phi_{\beta}(x_\beta,\b{y}_\beta)<0$, so $\gamma''(u(x_\beta,\b{y}_\beta))<0$. However, since $\lim\limits_{x\to\pm\infty}u(x,\b{y})=\pm 1$ uniformly in $\b{y}$ and $\gamma''(\pm 1)>0$, so there exists a constant $X_0>0$ such that if $|x|\geq X_0$, then $\gamma''(u(x,\b y))\geq 0$. Because $\gamma''(u(x_{\beta},\b y_{\beta}))<0$, so $|x_\beta|<X_0$.

Therefore, we know that $\{\b{\xi}_{\beta}\}_{\beta\in D_1}$ is a compact set in $\Omega_d$. So there exists a subsequence of $\beta$ in $D_1$ such that $\beta\to\overline{\beta}$, i.e. the supremum of set $D_1$, and
\begin{align*}
	\b{\xi}_{\beta}\to\overline{\b{\xi}}_{\beta}
\end{align*}
for some $\overline{\b{\xi}}_{\beta}\in\Omega_d$
. Because $\phi_{\beta}(\b{\xi}_\beta)<0$, so
\begin{align*}
	\phi_{\overline{\beta}}(\overline{\b{\xi}}_\beta)\leq 0
\end{align*}
by passing the limit $\beta\to\overline{\beta}$ and continuity of $g$ and $u_x$. However, by the definition of $\overline{\beta}$, we have $\phi_{\overline{\beta}}(\b{\xi})\geq 0$ for any $\b{\xi}\in\Omega_d$ otherwise $\overline{\beta}$ should not be the supremum of $D_1$. Thus $\phi_{\overline{\beta}}(\overline{\b{\xi}}_\beta)=0$.

This ensures $\phi_{\overline{\beta}}\equiv 0$. Because $\phi_{\overline{\beta}}\geq 0$, so $\phi_{\overline{\beta}}$ attains minimum at $\overline{\b{\xi}}_\beta$. However, since $\phi_{\overline{\beta}}$ is also in the kernel of $L$, we have
\begin{align*}
	\mathcal{L}\phi_{\overline{\beta}}|_{\overline{\b{\xi}}_{\beta}}=-\gamma''(u(\overline{\b{\xi}}_{\beta}))\phi_{\overline\beta}(\overline{\b{\xi}}_\beta)=0.
\end{align*}
Then by Lemma \ref{lmm:max} and minimality of $\overline{\b{\xi}}_\beta$, we have $\phi_{\overline{\beta}}\equiv 0$. Thus
\begin{align*}
	u_x+\overline{\beta}g=0,
\end{align*}
i.e. $g$ and $u_x$ are linearly dependent. Thus the kernel of $L$ is only 1 dimension. Notice that every partial derivative of $u$ belongs to kernel of $L$, so there exist constants $c_i\ (i=1,2,...,d-1)$ such that 
\begin{align*}
	u_{y_i}+c_iu_x=0,\ i=1,2,...,d-1
\end{align*}
for any $y_i\in\T^{d-1}$.

To close the proof, we prove that in fact $c_i=0, i=1,2,...,d-1$. Otherwise, we assume $c_i>0$ without loss of generality. For any given $(x,\b{y})\in\Omega_d$, by periodicity and the far end limit assumption \eqref{assm:farendlimit}, we have
\begin{align*}
	u(x,\b{y})&=\lim\limits_{n\to+\infty}u(x+c_in,\b{y}-n\b{e}_i)=\lim\limits_{n\to+\infty}u(x+c_in,\b{y})=1,\\
	u(x,\b{y})&=\lim\limits_{n\to+\infty}u(x-c_in,\b{y}+n\b{e}_i)=\lim\limits_{n\to+\infty}u(x-c_in,\b{y})=-1.
\end{align*}
Here $n$ are positive integers and $\b{e}_i=(0,...,0,1,0,...0), i=1,2,...,d-1$ form the canonical orthogonal basis in $\R^{d-1}$ with 1 only at the $i$ th component, and 0 for others. This yields contradiction. So $c_i=0, i=1,2,...,d-1$, i.e. $\nabla_{\b{y}}u(x,\b{y})=0$ and $u$ is a 1D profile that only depends on $x$.
\end{proof}
\subsection{Proof of Theorem \ref{thm:uniqueness}: uniqueness up to translations}
To completely understand all layer solutions to  \eqref{eq:generalized1d} and minimizers of functional $F$ on set $\mathcal{A}$, we prove the following lemma:
\begin{lemma}\label{lmm:minimizersarelayersolutions}
	(minimizers are layer solutions) For any dimension $d\geq 1$, suppose that $\gamma\in C^{\infty}(\R)$ is a double-well {type} potential satisfying \eqref{condition:potential}. Consider functional energy $F$ in \eqref{def:energyfunctional}, set $\mathcal{A}$ in \eqref{def:funcsetA}, and set $\mathcal{A}_\ell$, $\mathcal{A}_m$ in \eqref{def:minimizersandlayersolutions}.
	Then 
	\begin{align*}
		\mathcal{A}_m\subset\mathcal{A}_\ell.
	\end{align*} 
\end{lemma}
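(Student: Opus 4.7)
The plan is to compare any minimizer $v \in \mathcal{A}_m$ against the entire family of translates of the special minimizer $u^*$ produced in Theorem \ref{thm:existence}, which is smooth, strictly increasing in $x$, one-dimensional, and attains the bi-state far-field limits. By Lemma \ref{lmm:translationinvariant}, every translate $u_t^*(x,\b{y}) := u^*(x+t)$ lies in $\mathcal{A}_m$ as well. For each $t \in \R$, set $m_t := \min(v, u_t^*)$ and $M_t := \max(v, u_t^*)$. The identity $\max(f,g) = \tfrac{1}{2}(f+g+|f-g|)$ together with the pointwise inequality $\bigl||f-g|(\b{w})-|f-g|(\b{w}')\bigr| \leq |(f-g)(\b{w})-(f-g)(\b{w}')|$ shows that the Gagliardo seminorms of $m_t-\eta$ and $M_t-\eta$ are controlled by those of $v-\eta$ and $u_t^*-\eta$, so $m_t, M_t \in \mathcal{A}$. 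Applying Lemma \ref{lmm:energydecreasing} to the pair $(v, u_t^*)$ yields $F(m_t)+F(M_t) \leq F(v)+F(u_t^*) = 2\min_{\mathcal{A}} F$, and minimality of $m_t, M_t$ forces equality. The equality case of Lemma \ref{lmm:energydecreasing} then produces the dichotomy: for every $t \in \R$, either $v \geq u_t^*$ or $v \leq u_t^*$ almost everywhere on $\Omega_d$.

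Next, define $T_+ := \{t \in \R : v \geq u_t^* \text{ a.e.}\}$ and $T_- := \{t \in \R : v \leq u_t^* \text{ a.e.}\}$, so that $T_+ \cup T_- = \R$. Strict monotonicity of $u^*$ implies that $u_t^*$ is pointwise nondecreasing in $t$, hence $T_+$ is downward closed and $T_-$ is upward closed; both are closed under sequential limits since $u_{t_n}^* \to u_t^*$ pointwise when $t_n \to t$. Therefore $T_+ = (-\infty, \bar t\,]$ and $T_- = [\,\underline t, +\infty)$ for some $\bar t, \underline t \in [-\infty, +\infty]$. The extreme case $T_+ = \R$ is impossible: sending $t \to +\infty$ and using the uniform convergence $u^*(x+t) \to 1$ from Theorem \ref{thm:existence}(ii) would give $v \geq 1$ a.e., whence $v - \eta \geq 2$ on the infinite-measure slab $\{x \leq -1\} \times \T^{d-1}$, contradicting $v - \eta \in L^2(\Omega_d)$; symmetrically $T_- \neq \R$. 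If $\bar t > \underline t$, then every $t$ in the nonempty open interval $(\underline t, \bar t)$ lies in $T_+ \cap T_-$, forcing $v = u_t^*$ a.e.\ simultaneously for uncountably many distinct profiles $u_t^*$, an obvious contradiction. Necessarily $\bar t = \underline t =: t_0 \in \R$, and at $t=t_0$ the two inequalities combine to give $v = u^*(\cdot + t_0)$ a.e.

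Since $u^*(\cdot + t_0)$ is smooth, strictly increasing in $x$, one-dimensional, and attains $\pm 1$ uniformly as $x \to \pm \infty$ by Theorem \ref{thm:existence}(ii)--(iv), the identification $v = u^*(\cdot + t_0)$ places $v$ in $\mathcal{A}_\ell$ once one selects the natural smooth representative of its equivalence class; observe that $\eta(\cdot + t_0) - \eta$ is smooth and compactly supported in the $x$ variable, so $u^*(\cdot + t_0) - \eta \in H^2(\Omega_d) \subset H^1(\Omega_d)$, as required by the definition of $\mathcal{A}_\ell$. This gives $\mathcal{A}_m \subset \mathcal{A}_\ell$. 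The step I expect to be the main technical wrinkle is the verification that $m_t, M_t \in \mathcal{A}$, i.e.\ that pointwise min and max preserve the $H^{1/2}$ perturbation class; this is settled by the Gagliardo seminorm estimate indicated above, after which the remainder of the argument is an elementary consequence of the equality case of the rearrangement inequality and the strict monotonicity of $t \mapsto u_t^*$.
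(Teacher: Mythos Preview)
Your argument is correct and takes a genuinely different route from the paper's proof. The paper fixes an arbitrary minimizer $u^*\in\mathcal{A}_m$, first bootstraps its regularity via Lemma~\ref{lmm:regularity} (so that $u^*-\eta\in H^n$ for all $n$ and the far-field limits hold), then applies the rearrangement lemma to $u^*$ against its \emph{own} translates $u^*(\cdot+\tau)$ to obtain monotonicity, and finally upgrades to strict monotonicity via the maximal principle (Lemma~\ref{lmm:max}). You instead compare an arbitrary minimizer $v$ against the one-parameter family of translates of the \emph{specific} layer-solution minimizer produced by Theorem~\ref{thm:existence}, and the closed half-line structure of $T_\pm$ forces $v$ to coincide with one of those translates.

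Your approach is more elementary in that it avoids the regularity bootstrap and the strict-monotonicity argument via the maximal principle; all the smoothness, monotonicity, and far-field information is inherited for free from the known $u^*$. It also proves something strictly stronger than the lemma as stated: you obtain directly that every minimizer is a translate of $u^*$, which is the minimizer half of Theorem~\ref{thm:uniqueness}. The price is that your proof is not self-contained---it leans on the full conclusion of Theorem~\ref{thm:existence}---whereas the paper's proof of this lemma works from the minimizing property alone (plus regularity) and could in principle be read independently of the explicit construction. Both approaches hinge on the equality case of Lemma~\ref{lmm:energydecreasing}; the difference is purely in the choice of comparison profile.
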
 
\begin{proof}
	Let $u^*\in\mathcal{A}_m$. First of all, $u^*$ is a weak solution to equation \eqref{eq:generalized1d}. Then as in the proof of Theorem \ref{thm:existence}, we know that it solves \eqref{eq:generalized1d} $L^2$ sense, i.e. $\mathcal{L}u^*=-\gamma'(u^*)\in L^2(\Omega_d)$ (see calculation \eqref{eq:L2solution}). Then by Lemma \ref{lmm:propertyL3}, we know that $u^*-\eta\in H^1(\Omega_d)$.
	
	Because $u^*$ is a minimizer, so $|u^*|\leq 1$. Otherwise 
	\begin{align}
	\tilde{u}=\max\{1,\min\{-1,u^*\}\}
	\end{align}
	is also in $\mathcal{A}$ and satisfies $F(\tilde u)<F(u^*)$ by definition of $F$ in \eqref{def:energyfunctional}. This contradicts with the minimality of $u^*$. So $u^*$ is bounded. Then by Lemma \ref{lmm:regularity}, $u^*-\eta\in H^n(\Omega_d)$ for any $n>0$. Therefore, we have
	\begin{align}\label{eq:limit}
		\lim\limits_{x\to\pm\infty}u^*(x,\b{y})=\pm 1.
	\end{align}
	
	Now it is left to prove the strict monotonicity of $u^*$. Again, this is realized by the energy decreasing rearrangement method (Lemma \ref{lmm:energydecreasing}). For any $\tau>0$, consider the translation of $u^*$, i.e.
	\begin{align*}
		u_\tau(x,\b y)=u^*(x+\tau,\b y).
	\end{align*}
	Define
	\begin{align*}
		m(\b{w}):=\min\{u_\tau(\b w),\ u^*(\b w)\},\ M(\b w):=\max\{u_\tau(\b w),\ u^*(\b w)\}.
	\end{align*}
	Then by Lemma \ref{lmm:energydecreasing}, we know that
	\begin{align*}
		F(m)+F(M)\leq F(u_\tau)+F(u^*).
	\end{align*}
	By translation-invariance (Lemma \ref{lmm:translationinvariant}), we know $F(u_\tau)=F(u^*)$. Thus both $u^*$ and $u_\tau$ are minimizers. So by minimality of $u^*$ and $u_\tau$, we have
	\begin{align*}
		F(m)+F(M)=F(u_\tau)+F(u^*).
	\end{align*}
	Again, by Lemma \ref{lmm:energydecreasing}, this equality holds if and only if either $u_\tau(\b{w})\geq u^*(\b{w})$ or $u_\tau(\b{w})\leq u^*(\b{w})$. Then by the limit condition \eqref{eq:limit}, we know $u_\tau(\b{w})\geq u^*(\b{w})$. Thus $u^*$ is non-decreasing. 
	
	Finally, as in the proof of Theorem \ref{thm:existence}, the fact that $u$ is non-decreasing implies strict monotonicity. Suppose that $\dfrac{\p u^*(x_0,\b{y}_0)}{\p x}=0$ for some $(x_0,\b{y}_0)\in\Omega_d$, then taking derivative on both sides of \eqref{eq:generalized1d} yields 
	\begin{align*}
	\mathcal{L}\dfrac{\p u^*(x_0,\b{y}_0)}{\p x} = -\gamma''(u^*)\dfrac{\p u^*(x_0,\b{y}_0)}{\p x}=0.
	\end{align*} 
	Thus $\mathcal{L}\dfrac{\p u^*}{\p x}=0$ at $(x_0,\b{y}_0)$. However, since $\dfrac{\p u^*}{\p x}\geq 0$, we know that $\dfrac{\p u^*}{\p x}$ attains minimum at $(x_0,\b{y}_0)$. Then by Lemma \ref{lmm:max}, we know that $\dfrac{\p u^*}{\p x}=0$, i.e. $u^*$ is a constant. This contradicts with the far field limit of $u^*$. So $\dfrac{\p u^*(x,\b{y})}{\p x}>0$ holds for any $(x,\b{y})\in\Omega_d$. Thus $u^*$ is a layer solution. 
\end{proof} 
Therefore, all minimizers of $F$ on set $\mathcal{A}$ are layer solutions. Recall that Theorem \ref{thm:degiorgi} claims that all layer solutions with $H^1$ regularity have one-dimensional symmetry if the double-well potential $\gamma$ is smooth, so all these minimizers are also exactly 1D profiles. 

Moreover, these 1D profiles are unique up to translations. According to \cite{cabre2005layer}, if $\gamma\in C^{2,\alpha}(\R)$ is a double-well potential, then layer solutions to 
\begin{align}
	(-\p_{xx})^{1/2}u(x)+\gamma'(u(x))=0,\ x\in\R.
\end{align}
is unique up to translations (see Theorem 1.2 in \cite{cabre2005layer}). Remember that Lemma \ref{lmm:propertyofL2} ensures that $\mathcal{L}f=c_{\mathcal{L}}(-\p_{xx})^{1/2}f$ if $f(x,\b{y})=f(x)$ is a 1D profile, therefore, both layer solutions and minimizers are unique up to translations.
\begin{proof}[Proof of Theorem \ref{thm:uniqueness}]
	 By Theorem \ref{thm:degiorgi}, we know that for any $u\in\mathcal{A}_\ell$, u is a 1D profile to solution \eqref{eq:generalized1d}, i.e.
	\begin{align*}
		\mathcal{L}u(x,\b{y})+\gamma'(u(x,\b{y}))=0.
	\end{align*}
	By Lemma \ref{lmm:propertyofL2}, for any $(x,\b y)\in\Omega_d$, we have
	\begin{align*}
		\mathcal{L}u(x,\b{y})=c_{\mathcal{L}}(-\p_{xx})^{1/2}u(x).
	\end{align*}
	Thus viewed as a 1D profile $u(x)$, a layer solution $u(x,\b y)$ satisfies
	\begin{align}
		c_{\mathcal{L}}(-\p_{xx})^{1/2}u(x)+\gamma'(u(x))=0.
	\end{align}
	Then by Theorem 1.2 in \cite{cabre2005layer}, we know
	\begin{align}
		\mathcal{A}_\ell=\{u:\ u(x,\b{y})=u^*(x+x_0)\ \mathrm{for\ some}\ x_0\in\R\}.
	\end{align}
	Here $u^*$ is the unique solution to \eqref{eq:essential1d}. 
	
	By Theorem \ref{thm:existence} and Lemma \ref{lmm:minimizersarelayersolutions}, we know that $\mathcal{A}_m$ is non-empty and $\mathcal{A}_m\subset\mathcal{A}_\ell$. Moreover, by Lemma \ref{lmm:translationinvariant}, i.e. the translation-invariant property, we know that $u(x)\in\mathcal{A}_m$ if and only if $u(x+x_0)\in\mathcal{A}_m$. Notice that $\mathcal{A}_\ell$ itself is also unique up to translations, so we have \eqref{eq:uniqueness}, i.e.
	\begin{align*}
	\mathcal{A}_m=\mathcal{A}_\ell=\{u:\ u(x,\b{y})=u^*(x+x_0)\ \mathrm{for\ some}\ x_0\in\R\}.
	\end{align*}
	This concludes the uniqueness (up to translations) of layer solutions to equation \eqref{eq:generalized1d} and minimizers of $F$ on set $\mathcal{A}$.
\end{proof}
\subsection{Proof of Theorem \ref{thm:PN}: implication on the PN model}
As a direct application of previous results on the existence and rigidity, now we can prove Theorem \ref{thm:PN}.
\begin{proof}[Proof of Theorem \ref{thm:PN}]
	As a minimizer of $\tilde{E}$ in \eqref{energy} in the perturbed sense, we know that $\b u$ is a weak solution to \eqref{eq:EL} by Lemma \ref{Lem2.2}. A calculation (see  \cite{Caffarelli2006An}) involving the Dirichlet and Neumann map implies that if $\b u$ satisfies \eqref{eq:EL}, the elastic energy in the bulk can be expressed by $u_1^+(x,z)$ which is defined on the slip plane:
	\begin{align}
		E_{\mathrm{els}}(\b u)=\int_{\Gamma'}\mathcal{L}u_1^+(\b w)u_1^+(\b w)\mathrm{d}\b w=\dfrac{1}{2}\int_{\Omega_2}\int_{\Omega_2}|u_1^+(\b w)-u_1^+(\b w')|^2K(\b w-\b w')\mathrm{d}\b w\mathrm{d}\b w'.
	\end{align}  
	Here $\mathcal{L}$ is the linear operator defined in \eqref{eq:reduced1d} and $K$ is the corresponding convolution kernel which satisfies Assumption \eqref{A}-\eqref{D}. Therefore, $u_1^+$ is the minimizer of $F$ defined in \eqref{def:energyfunctional}. Then by Theorem \ref{thm:degiorgi} and Theorem \ref{thm:uniqueness}, we know that statement (ii) hold. Therefore, $u_1, u_2$ only depend on $x$ and $y$, satisfying the following reduced system of \eqref{eq:EL} in two dimensions:
	\begin{align}\label{eq:EL2}
	\begin{cases}
	\Delta\b{u}+\dfrac{1}{1-2\nu}\nabla(\nabla\cdot\b{u})=0,&\ \mathrm{in\ }\mathbb{R}^2\setminus\Gamma_1, \\
	\sigma^{+}_{12}+\sigma^{-}_{12}=\dfrac{\p \gamma}{\p u_1}(u_1^+),&\ \mathrm{on\ }\Gamma_1,\\
	\sigma^{+}_{22}=\sigma^{-}_{22},&\ \mathrm{on\ }\Gamma_1.
	\end{cases}
	\end{align}
	Here $\Gamma_1=\{(x,y)\in\R^2:\ y=0\}$.
	Thus smoothness of $u_1^+$ implies that $\b u$ is smooth in $\R^2\times\T\setminus\Gamma'$, so (i) holds. Finally, by Lemma 2.3 in \cite{gao2019mathematical}, we know that (iii), (iv) and (v) are true and hold point-wisely in $\R^2\times\T\setminus\Gamma'$ by smoothness.
\end{proof} 
\section{Spectral analysis of $L$}\label{sec:spectrum}
In Theorem \ref{thm:degiorgi}, we prove that if $u$ is a layer solution to \eqref{eq:generalized1d}, then the operator in \eqref{def:operatorL}, i.e.
\begin{align*}
L:H^1(\Omega)\subset L^2(\Omega)\to L^2(\Omega),\ L\phi=\mathcal{L}\phi+\gamma''(u)\phi
\end{align*} 
has one dimensional kernel which is exactly $\mathrm{span}\{u_x\}$. In this section, we proceed to prove that $L$ is positively semi-definite and $0$ is an isolated point spectrum. Denote the spectrum, the point spectrum, the residual spectrum and the continuous spectrum of a linear operator $L$ as $\sigma(L), \sigma_p(L), \sigma_r(L)$ and $\sigma_c(L)$ respectively. 

First of all, according to \cite{yosida2012functional}, since $L$ is self-adjoint (see Lemma \ref{lmm:selfadjoint}), we know $\sigma_r(L)=\emptyset$. Meanwhile, since 
\begin{align*}
	\lim\limits_{x\to\pm\infty}u(x,\b{y})=\pm 1
\end{align*}
holds uniformly in the $\b{y}$ direction and $\gamma''(\pm 1)>0$, we know that $\gamma''(u)$ is lower bounded and can only be negative on a compact set in $\Omega_d$. Therefore, there exists a finite lower bound of the spectrum of $L$, i.e.
\begin{restatable}{lemma}{spct}
	\label{lmm:spct}
		$\sigma(L)=\sigma_p(L)\cup\sigma_c(L)\subset\left[-\lambda_1,\infty\right)$. Here $\lambda_1>0$ is a constant.
\end{restatable}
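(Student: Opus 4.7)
The plan is to use the self-adjointness of $L$ together with a uniform lower bound on its quadratic form. Since $L$ has already been shown to be self-adjoint on $L^2(\Omega_d)$ with domain $H^1(\Omega_d)$ (Lemma \ref{lmm:selfadjoint}), the residual spectrum is empty, so $\sigma(L) = \sigma_p(L) \cup \sigma_c(L)$ immediately. The substantive task is to locate this spectrum in an interval of the form $[-\lambda_1,\infty)$, and the standard route for a bounded-below self-adjoint operator is to prove $\langle L\phi,\phi\rangle_{L^2(\Omega_d)} \geq -\lambda_1 \|\phi\|_{L^2(\Omega_d)}^2$ for all $\phi$ in the form domain.

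First I would split the quadratic form as
\begin{align*}
\langle L\phi,\phi\rangle_{L^2(\Omega_d)} = \langle \mathcal{L}\phi,\phi\rangle_{L^2(\Omega_d)} + \int_{\Omega_d} \gamma''(u)\,|\phi|^2 \,\mathrm{d}\b w.
\end{align*}
By Assumption \eqref{A} and Plancherel's theorem (the same computation used in Lemma \ref{lmm:propertyL3}), $\langle \mathcal{L}\phi,\phi\rangle_{L^2(\Omega_d)} = \langle \sigma_\mathcal{L}(\b \nu)\hat\phi,\hat\phi\rangle_{L^2(\Omega_d')} \geq 0$, so the nonlocal part contributes a nonnegative term and may be dropped for the lower bound. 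It then remains to show that $\gamma''(u)$ is bounded below on $\Omega_d$.

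This boundedness is exactly where the structure of a layer solution enters. By Lemma \ref{lmm:regularity}, $u$ is bounded, hence $\gamma''(u)$ is continuous and bounded by continuity of $\gamma''$. More quantitatively, in the proof of Theorem \ref{thm:degiorgi} we established that $\lim_{x\to\pm\infty} u(x,\b y) = \pm 1$ uniformly in $\b y$, so $\gamma''(u)$ converges uniformly to $\gamma''(\pm 1) > 0$ as $x \to \pm\infty$, and in particular $\gamma''(u)$ attains its infimum on a compact slab $\{|x|\leq X_0\}\times \T^{d-1}$. Setting $\lambda_1 := \max\{0, -\inf_{\Omega_d}\gamma''(u)\}$, we obtain
\begin{align*}
\int_{\Omega_d} \gamma''(u)\,|\phi|^2 \,\mathrm{d}\b w \geq -\lambda_1 \|\phi\|_{L^2(\Omega_d)}^2,
\end{align*}
and combining with the nonnegativity of $\langle \mathcal{L}\phi,\phi\rangle$ yields $\langle L\phi,\phi\rangle \geq -\lambda_1 \|\phi\|_{L^2(\Omega_d)}^2$. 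The strict positivity $\lambda_1 > 0$ follows because $u$ traverses $(-1,1)$ (layer condition) while $\gamma$ is double-well, forcing $\gamma''(u(\b w_0)) < 0$ at some intermediate $\b w_0$; alternatively one may simply check that $u_x \in \mathrm{Ker}(L)$ is nontrivial, forcing the infimum of $\gamma''(u)$ to be negative.

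Finally, applying the standard spectral-theoretic fact that for a self-adjoint operator $L$ one has $\inf\sigma(L) = \inf_{\|\phi\|=1}\langle L\phi,\phi\rangle$ (for $\phi$ in the form domain), the derived coercivity bound gives $\sigma(L)\subset[-\lambda_1,\infty)$. The only mildly delicate point is ensuring that the quadratic-form bound transfers to a spectral bound for an unbounded self-adjoint operator, but this is immediate from the spectral theorem via functional calculus: if $\langle L\phi,\phi\rangle + \lambda_1\|\phi\|^2 \geq 0$ on the form domain, then $L + \lambda_1 I \geq 0$ and $\sigma(L+\lambda_1 I)\subset[0,\infty)$. No serious obstacles arise; the argument is essentially a bookkeeping of facts already assembled in Sections \ref{sec:preliminary} and \ref{sec:regularity}.
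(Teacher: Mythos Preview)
Your proof is correct and rests on the same key observation as the paper's: $\gamma''(u)$ is bounded below on $\Omega_d$ because $u(x,\b y)\to\pm 1$ uniformly and $\gamma''$ is continuous. The difference lies only in how this pointwise bound is converted into a spectral bound. You establish the quadratic-form inequality $\langle L\phi,\phi\rangle\geq -\lambda_1\|\phi\|^2$ and then invoke the spectral theorem (for a self-adjoint operator, $\inf\sigma(L)=\inf_{\|\phi\|=1}\langle L\phi,\phi\rangle$). The paper instead works at the resolvent level: for each $\lambda\in\mathbb{C}\setminus[-\lambda_1,\infty)$ it verifies directly that $\|(\lambda I-L)w\|\geq c\|w\|$, and then closes with closed-range, Hahn--Banach, and closed-graph arguments to produce a bounded inverse. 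Your route is shorter and more conceptual but leans on the spectral theorem as a black box; the paper's route is more hands-on and avoids citing the spectral theorem, at the cost of a few extra lines of functional-analytic bookkeeping. Both are standard, and neither requires any idea the other does not already contain.

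One minor remark: you spend some effort arguing that $\lambda_1$ is \emph{strictly} positive, but this is not a substantive claim here. Since $\gamma''(u)$ is merely bounded below, any $\lambda_1>\max\{0,-\inf\gamma''(u)\}$ works; the paper simply picks one without comment. Your argument that $\gamma''(u)$ must actually be negative somewhere is correct but unnecessary for the lemma as stated.
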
 
Employing the perturbation theory of self-adjoint operators \cite{kato2013perturbation}, we can characterize the essential spectrum of $L$ by viewing $L$ as a self-adjoint perturbation of $\mathcal{L}$. Remember that the continuous spectrum is a subset of the essential spectrum, we have the following lemma:
\begin{restatable}{lemma}{contspct}
	\label{lmm:contspct}
	$\sigma_c(L)\subset\left[\lambda_2,\infty\right)$. Here $\lambda_2>0$ is a constant.
\end{restatable}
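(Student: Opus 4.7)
The plan is to use the Weyl-sequence characterization of the essential spectrum. Since $L$ is self-adjoint (Lemma \ref{lmm:selfadjoint}), one has $\sigma_c(L)\subset\sigma_{\mathrm{ess}}(L)$, so it suffices to show $\sigma_{\mathrm{ess}}(L)\subset[\lambda_2,\infty)$ with $\lambda_2:=\min\{\gamma''(1),\gamma''(-1)\}$, which is strictly positive by \eqref{condition:potential}. Equivalently, I will show that no $\lambda<\lambda_2$ admits a Weyl sequence, i.e.\ a sequence $\{\phi_n\}\subset H^1(\Omega_d)$ with $\|\phi_n\|_{L^2}=1$, $\phi_n\rightharpoonup 0$ in $L^2(\Omega_d)$, and $(L-\lambda)\phi_n\to 0$ in $L^2(\Omega_d)$. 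The intuition is that $\gamma''(u(x,\b y))\to\gamma''(\pm 1)>0$ as $x\to\pm\infty$, so the multiplicative perturbation should push the essential spectrum of the nonnegative operator $\mathcal{L}$ (positive by Assumption \eqref{A}) upward by $\lambda_2$.

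Fix $\lambda<\lambda_2$ and suppose such a Weyl sequence exists. Pairing $(L-\lambda)\phi_n$ with $\phi_n$ and using $\|\gamma''(u)\|_{L^\infty}\le M<\infty$ yields a uniform upper bound on $\langle\mathcal{L}\phi_n,\phi_n\rangle$; Lemma \ref{lmm:propertyL3} then gives $\|\phi_n\|_{H^{1/2}(\Omega_d)}\le C$. On each slab $[-R,R]\times\T^{d-1}$ the embedding $H^{1/2}\hookrightarrow L^2$ is compact, so after a diagonal extraction I may assume $\phi_n\to 0$ strongly in $L^2([-R,R]\times\T^{d-1})$ for every $R>0$. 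The uniform convergence $u(x,\b y)\to\pm 1$ as $x\to\pm\infty$ (established in the proof of Theorem \ref{thm:degiorgi}) combined with continuity of $\gamma''$ lets me, for any $\eps>0$, choose $R=R(\eps)$ so that $\gamma''(u(x,\b y))\ge \lambda_2-\eps$ whenever $|x|>R$.

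The key estimate then reads
\begin{equation*}
\langle(L-\lambda)\phi_n,\phi_n\rangle=\langle\mathcal{L}\phi_n,\phi_n\rangle+\int_{\Omega_d}(\gamma''(u)-\lambda)|\phi_n|^2\,\ud\b w.
\end{equation*}
Dropping the nonnegative $\langle\mathcal{L}\phi_n,\phi_n\rangle$ and splitting the integral into $\{|x|\le R\}$ and $\{|x|>R\}$: the first piece vanishes in the limit by local $L^2$ convergence together with boundedness of $\gamma''(u)-\lambda$, whereas the second is at least $(\lambda_2-\eps-\lambda)\int_{|x|>R}|\phi_n|^2$, which tends to $\lambda_2-\eps-\lambda$ because $\int_{|x|\le R}|\phi_n|^2\to 0$ while $\|\phi_n\|_{L^2}=1$. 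Since the left-hand side tends to $0$, choosing $\eps<\lambda_2-\lambda$ produces a contradiction. Hence $\sigma_{\mathrm{ess}}(L)\subset[\lambda_2,\infty)$, giving the lemma.

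The main obstacle is the Rellich-type compactness step on the mixed domain $[-R,R]\times\T^{d-1}$ for the nonlocal space $H^{1/2}$: compactness in the $\b y$ variable is automatic since $\T^{d-1}$ is compact, but on the $x$-factor I need the standard compact embedding $H^{1/2}\hookrightarrow L^2$ on bounded intervals, followed by a diagonal extraction as $R\to\infty$ to produce a single subsequence converging strongly on every slab. Once that compactness is in hand, everything else follows cleanly from Assumption \eqref{A}, the boundedness of $\gamma''(u)$, and the uniform far-field limit of $u$ already exploited in the proof of the De Giorgi conjecture.
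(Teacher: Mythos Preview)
Your argument is correct. Both your proof and the paper's ultimately rest on the same two ingredients: the compact embedding $H^{1/2}([-R,R]\times\T^{d-1})\hookrightarrow L^2$ on finite slabs, and the uniform far-field limit $\gamma''(u(x,\b y))\to\gamma''(\pm 1)$. The packaging differs, though. The paper invokes Weyl's theorem on relatively compact perturbations: it writes $L=A+B$ with $A=\mathcal{L}+f$, $f$ the step function taking values $\gamma''(\pm 1)$, and $B=\gamma''(u)-f$; then it shows $B(A+i)^{-1}$ is compact (using exactly the slab compactness and the vanishing of $B$ at infinity), concludes $\sigma_{\mathrm{ess}}(L)=\sigma_{\mathrm{ess}}(A)$, and reads off $\sigma(A)\subset[c,\infty)$ from $f\ge c>0$. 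You instead work directly with a Weyl sequence and the quadratic form, dropping the nonnegative $\langle\mathcal{L}\phi_n,\phi_n\rangle$ and exploiting local strong $L^2$ convergence to force mass to infinity where the potential is $\ge\lambda_2-\eps$. Your route is a bit more elementary---it avoids quoting the abstract perturbation theorem and yields the explicit constant $\lambda_2=\min\{\gamma''(1),\gamma''(-1)\}$---while the paper's decomposition is more structural and isolates the comparison operator $A$ explicitly. Either way the compactness step on the mixed domain, which you flag as the main obstacle, is the same one the paper uses and is unproblematic since $[-R,R]\times\T^{d-1}$ is bounded.
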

Therefore, the spectrum of $L$ that belongs to $(-\lambda_1,\lambda_2)$ is a subset of $\sigma_p(L)$ with finite dimensional eigenspaces. Moreover, they are isolated points in $\sigma(L)$. To finish the spectral analysis of $L_2$, we finally prove the positive semi-definiteness of $L$.
\begin{lemma}\label{lmm:ptspct} $\sigma_p(L)\subset[0,\infty)$.
\end{lemma}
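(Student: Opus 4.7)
The proof rests on a Picone-type (ground-state substitution) identity that rewrites the quadratic form of $L$ as a manifestly non-negative expression, exploiting the fact that $Lu_x=0$ with $u_x>0$ everywhere on $\Omega_d$. The positivity of $u_x$ comes from the definition of a layer solution, and the equation $Lu_x=0$ follows by differentiating $\mathcal{L}u+\gamma'(u)=0$ in $x$; by Lemma \ref{lmm:regularity} together with the fact that $u-\eta\in H^1(\Omega_d)$, $u_x$ is smooth, bounded, and belongs to $L^2(\Omega_d)$.

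For $\phi\in C_c^{\infty}(\Omega_d)$, set $\sigma:=\phi/u_x$, which is smooth and compactly supported since $u_x$ is continuous and strictly positive. The engine of the argument is the pointwise identity
\begin{equation*}
(as-br)^2 \;=\; (as^2-br^2)(a-b) \;+\; ab(s-r)^2, \qquad a,b,r,s\in\R,
\end{equation*}
applied with $a=u_x(\b{w})$, $b=u_x(\b{w}')$, $s=\sigma(\b{w})$, $r=\sigma(\b{w}')$ and then integrated against $K(\b{w}-\b{w}')$. Combined with the Dirichlet-form representation $\langle\mathcal{L}f,g\rangle=\tfrac{1}{2}\iint(f(\b{w})-f(\b{w}'))(g(\b{w})-g(\b{w}'))K(\b{w}-\b{w}')\,\mathrm{d}\b{w}\,\mathrm{d}\b{w}'$ from Lemma \ref{lmm:propertyL3}, this gives
\begin{equation*}
\langle\mathcal{L}\phi,\phi\rangle \;=\; \langle\mathcal{L}u_x,\,u_x\sigma^2\rangle \;+\; \tfrac{1}{2}\iint_{\Omega_d\times\Omega_d} u_x(\b{w})u_x(\b{w}')(\sigma(\b{w})-\sigma(\b{w}'))^2 K(\b{w}-\b{w}')\,\mathrm{d}\b{w}\,\mathrm{d}\b{w}'.
\end{equation*}
Adding $\int\gamma''(u)u_x^2\sigma^2\,\mathrm{d}\b{w}=\int\gamma''(u)\phi^2\,\mathrm{d}\b{w}$ to both sides and invoking $Lu_x=\mathcal{L}u_x+\gamma''(u)u_x=0$, the potential contributions cancel in pairs and one obtains
\begin{equation*}
\langle L\phi,\phi\rangle \;=\; \tfrac{1}{2}\iint_{\Omega_d\times\Omega_d} u_x(\b{w})u_x(\b{w}')(\sigma(\b{w})-\sigma(\b{w}'))^2 K(\b{w}-\b{w}')\,\mathrm{d}\b{w}\,\mathrm{d}\b{w}' \;\geq\; 0,
\end{equation*}
where the non-negativity uses $u_x>0$ and the positivity of $K$ from Assumption \eqref{B}.

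This proves that $L$ is non-negative on $C_c^{\infty}(\Omega_d)$. Since $C_c^{\infty}(\Omega_d)$ is dense in the form domain $H^{1/2}(\Omega_d)$ and the bilinear form $\langle L\cdot,\cdot\rangle$ is continuous on $H^{1/2}(\Omega_d)$ (by Lemma \ref{lmm:propertyL3} together with boundedness of $\gamma''(u)$, which follows from boundedness of $u$ and continuity of $\gamma''$), the inequality extends to every $\phi\in H^{1/2}(\Omega_d)$. A point-spectrum eigenfunction $\psi\in H^{1}(\Omega_d)\subset H^{1/2}(\Omega_d)$ with $L\psi=\mu\psi$ therefore satisfies $\mu\|\psi\|_{L^2(\Omega_d)}^2=\langle L\psi,\psi\rangle\geq 0$, so $\mu\geq 0$ and $\sigma_p(L)\subset[0,\infty)$.

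The principal technical obstacle is verifying absolute convergence of the double integrals given the $|\b{z}|^{-d-1}$ singularity of $K$ at the origin. For $\phi\in C_c^{\infty}$, the Lipschitz regularity of $\sigma$ makes $(\sigma(\b{w})-\sigma(\b{w}'))^2 K(\b{w}-\b{w}')$ integrable near the diagonal, while compactness of $\mathrm{supp}(\phi)$ and boundedness of $u_x$ control the tails, so Fubini applies and the rearrangement that yields the Picone identity is justified term by term. A secondary subtlety is that $\sigma=\psi/u_x$ need not lie in $H^{1/2}$ for a generic eigenfunction $\psi$ because $u_x$ vanishes at infinity; this is precisely why the identity is first established on compactly supported test functions and then promoted to the form domain by density, rather than applied directly to $\psi$.
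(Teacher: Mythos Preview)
Your proof is correct and takes a genuinely different route from the paper. The paper argues by contradiction: assuming a negative eigenvalue $\lambda<0$ with eigenfunction $g$, it first shows $L|g|\leq\lambda|g|$ using positivity of the kernel, then runs the same sliding argument as in the proof of Theorem~\ref{thm:degiorgi}, namely it studies $\phi_\beta=u_x+\beta|g|$ for $\beta<0$, locates a minimum of $\phi_\beta$ via the maximal principle (Lemma~\ref{lmm:max}), and pushes $\beta$ to its supremum to force $\phi_{\overline\beta}\equiv 0$, hence $|g|=0$. Your approach instead exploits the ground-state substitution (Picone identity for nonlocal Dirichlet forms): writing $\phi=u_x\sigma$ converts $\langle L\phi,\phi\rangle$ into a manifestly non-negative double integral, with the potential term absorbed by $Lu_x=0$. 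What the paper's method buys is uniformity with the rest of Section~\ref{sec:proofofdg}---it reuses exactly the machinery already set up for the De~Giorgi result and needs no algebraic identity. What your method buys is directness and a quantitative identity (one sees explicitly that $\langle L\phi,\phi\rangle=0$ forces $\phi/u_x$ to be constant, recovering simplicity of the zero eigenvalue as a byproduct), and it avoids the somewhat delicate compactness argument for the minimizing points $\b\xi_\beta$. Both proofs ultimately hinge on the same two ingredients: strict positivity of $u_x$ and positivity of the kernel $K$.
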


We will only prove Lemma \ref{lmm:ptspct} in this section and the proof of the fact that $L$ is self-adjoint, Lemma \ref{lmm:spct} and Lemma \ref{lmm:contspct} is attached in Appendix \ref{sec:funana}. Similar to the proof of Theorem \ref{thm:degiorgi}, the proof of Lemma \ref{lmm:ptspct} adopts an argument of contradiction and relies on the maximal principle of $\mathcal{L}$ in Lemma \ref{lmm:max}. 
\begin{proof} [Proof of Lemma \ref{lmm:ptspct}]
	
	We will prove $\sigma_{p}(L)\subset[0,\infty)$ by contradiction. Suppose that there exist $\lambda<0$ and non-zero $g\in H^1(\Omega_d)$ s.t.  
	\begin{align*}
		Lg=\lambda g.
	\end{align*}
	similar to the proof of Lemma \ref{lmm:kerneldecay}, we can prove that $g\in H^n(\Omega_d)$ for any $n>0$ and 
	\begin{align*}
		\lim\limits_{|x|\to+\infty}g(x,\b{y})=0
	\end{align*}
	holds uniformly in $\b{y}$ direction. 
	
    Consider $L|g|$. By Assumption \ref{B}, i.e. positivity of kernel $K$, and the fact that for any $\b{w},\b{w}'\in\Omega_d$,
    \begin{align*}
    	|g(\b{w}')|\geq\mathrm{sgn}(g)(\b{w})g(\b{w}'),
    \end{align*}
    we have
    \begin{align*}
	L|g|(x,\b{y})&=\mathcal{L}|g|(x,\b y)+\gamma''(u)|g|(x,\b y)\\
	&=\int_{\Omega_d}(|g(x,\b{y})|-|g(x',\b{y}')|)K(x-x,\b{y}-\b{y}')\mathrm{d}x'\mathrm{d}\b{y}'+\gamma''(u)|g|(x,\b y)\\
	&=\int_{\Omega_d}(\mathrm{sgn}(g)g(x,\b{y})-|g(x',\b{y}')|)K(x-x,\b{y}-\b{y}')\mathrm{d}x'\mathrm{d}\b{y}'+\mathrm{sgn}(g)(x,\b y)\gamma''(u)g(x,\b y)\\
	&\leq\mathrm{sgn}(g)(x,\b y)\left[\int_{\Omega_d}(g(x,\b{y})-g(x',\b{y}'))K(x-x,\b{y}-\b{y}')\mathrm{d}x'\mathrm{d}\b{y}'+\gamma''(u)g(x,\b y)\right]\\
	&\leq \mathrm{sgn}(g)(x,\b y)\cdot Lg(x,\b y) \\
	&=\lambda|g|(x,\b y)
	\leq 0.
	\end{align*} 
	Thus $|g|$ satisfies 
	\begin{align}\label{ineq:leq0}
		L|g|\leq \lambda|g|\leq 0.
	\end{align}
	
	Define $\phi_{\beta}=u_x+\beta|g|$ for real number $\beta$. Consider the following set of $\beta$:
	\begin{align*}
	D:=\{\beta<0\ |\ \phi_\beta(\b{\xi})<0\ \mathrm{for\ some\ }\b{\xi}\in\Omega_d\}.
	\end{align*}
	$D$ is nonempty because
	\begin{align*}
		\beta_1=\dfrac{-2u_x(x_0,\b{y}_0)}{|g|(x_0,\b{y}_0)}\in D
	\end{align*} for $(x_0,\b{y}_0)$ satisfying $|g|(x_0,\b{y}_0)>0$. Therefore 
	\begin{align*}
		\overline\beta:=\mathrm{sup}D
	\end{align*}
	is a well-defined finite number that lies in $[\beta_1,0]$. 
	
	Now for any $\beta\in D$, we will prove that there exists $(x_\beta,\b{y}_\beta)\in\Omega_d$ such that $\phi_{\beta}(x_\beta,\b{y}_\beta)$ attains a negative minimum at $\b{\xi}_\beta=(x_{\beta},\b{y}_\beta)$. First of all, by the definition of $D$, we know that $\phi_\beta$ is non-zero and attains a negative infimum. Remember that 
	\begin{align*}
		\lim\limits_{|x|\to\infty}g(x,\b{y})=0,\quad  \lim\limits_{|x|\to\infty}u_x(x,\b{y})=0
	\end{align*} holds uniformly in $\b{y}$, so
	\begin{align*}
		\lim\limits_{|x|\to\infty}\phi_{\beta}(x,\b y)=0
	\end{align*}
	holds uniformly in $\b{y}$. Recall that $\phi_\beta$ attains a negative infimum, so continuity of $\phi_\beta$ implies that this infimum is indeed a minimum that is attained for some $(x_\beta,\b{y}_\beta)$.
	
	Moreover, $\{\b{\xi}_\beta\}_{\beta}$ is bounded in $\Omega_d$. Notice that by \eqref{ineq:leq0} and $\beta<0$, 
	\begin{align*}
	L\phi_\beta &= Lu_x+\beta L|g|\geq \beta\lambda|g|\geq 0.
	\end{align*}
	Thus $L\phi_\beta|_{(x,\b{y})=\b{\xi}_\beta}\geq 0$. By maximal principle (Lemma \ref{lmm:max}), we know that
	\begin{align*}
		\mathcal{L}\phi_\beta|_{(x,\b{y})=\b{\xi}_\beta}\leq 0
	\end{align*}
	since $\phi_{\beta}$ attains minimum at $\b{\xi}_\beta$. Therefore, we have
	\begin{align*}
	\gamma''(u(x_\beta,\b{y}_\beta))\phi_\beta(x_\beta,\b{y}_\beta) = L\phi_\beta|_{(x,\b{y})=\b{\xi}_\beta} - \mathcal{L}\phi_\beta|_{(x,\b{y})=\b{\xi}_\beta}\geq 0.
	\end{align*}
	Because $\phi_\beta(x_\beta, \b{y}_\beta)\leq 0$ by definition of $(x_\beta,\b{y}_\beta)$, so $\gamma''(u(x_\beta,\b{y}_\beta))\geq 0$. So there exists $X>0$ that only depends on $u$ and $\gamma$ such that $|x_\beta|\leq X$. Since $g$ is periodic in $\b{y}$, we know that $\{\b{\xi}_\beta\}_{\beta}$ is bounded in $\Omega_d$.
	
	Given the boundedness of sequence $\{\b{\xi}_\beta\}_{\beta}$, we can now take a subsequence of $\beta$ (still denoted as $\beta$) such that $\beta\to\overline{\beta}$, the supremum of $D$, and $\b{\xi}_\beta\to\xi^*$ as $\beta\to\overline{\beta}$. As the supremum of $D$, $\overline{\beta}$ satisfies that  $\phi_{\overline{\beta}}(x,\b{y})\geq 0$. However, since $\phi_\beta(\b{\xi}_\beta)\leq 0$, passing the limit in $\beta$ gives that 
	\begin{align*}
	\phi_{\overline{\beta}}(\b{\xi}^*)=\lim\limits_{\beta\to\overline{\beta}}\phi_\beta(\b{\xi}_\beta)\leq 0.
	\end{align*}
	So $\phi_{\overline{\beta}}(\b{\xi}^*)=0$, which means that $\phi_{\overline{\beta}}$ attains minimum 0 at $\b{\xi}^*$. 
	
	This in fact ensures that $\phi_{\overline{\beta}}\equiv 0.$ By Lemma \ref{lmm:max}, we know 
	\begin{align*}
		\mathcal{L}\phi_{\overline{\beta}}|_{(x,\b{y})=\b{\xi}^*}\leq 0.
	\end{align*} 
	However, we also have 
	\begin{align*}
	L\phi_{\overline{\beta}} =\overline{\beta} L|g|\geq \overline{\beta}\lambda|g|\geq 0.
	\end{align*}
	Remember $\phi_{\overline{\beta}}(\b{\xi}^*)=0$, so 
	\begin{align*}
	0 &\leq L\phi_{\overline{\beta}}(\b{\xi}^*)\\
	&=\mathcal{L}\phi_{\overline{\beta}}|_{(x,\b{y})=\b{\xi}^*}+\gamma''(u(x,\b{y}))\phi_{\overline{\beta}}(x,\b{y})|_{(x,\b{y})=\b{\xi}^*}\\
	&=\mathcal{L}\phi_{\overline{\beta}}|_{(x,\b{y})=\b{\xi}^*}\\
	&\leq 0.
	\end{align*}
	So all these inequalities are in fact equalities, i.e. $\mathcal{L}\phi_{\overline{\beta}}|_{(x,\b{y})=\b{\xi}^*}=0$. By Lemma \ref{lmm:max}, we know that $\phi_{\overline{\beta}}\equiv 0$, which gives $L|g|=0$ and $0\leq L|g|\leq\lambda |g|\leq 0,$ hence $|g|=0$, contradiction! Thus $L$ has no negative point spectrum. 
\end{proof}

\section*{Acknowledgement}
Jian-Guo Liu was supported in part by the National Science Foundation (NSF) under award DMS-1812573 and DMS-2106988.

\bibliographystyle{plain}
\bibliography{rigidity}

\appendix

\section{Review of functional analysis}\label{sec:funana}
For the sake of completeness, we prove the fact that operator $L$ defined in \eqref{def:operatorL} and $\mathcal{L}$ in \eqref{def:convolutionL} are both self-adjoint in Lemma \ref{lmm:spct} and Lemma \ref{lmm:contspct} which address the spectrum of $L$. Let us recall that linear operator $L$ is given by 
\begin{align*}
	L:H^1(\Omega)\subset L^2(\Omega)\to L^2(\Omega),\ L\phi=\mathcal{L}\phi+\gamma''(u)\phi.
\end{align*}
Here $u$ is a layer solution to equation \eqref{eq:reduced1d}. For theorems and definitions in functional analysis, one can refer to \cite{yosida2012functional}. For perturbation theory of self-adjoint operators, one can refer to \cite{kato2013perturbation}.

In fact, the fact that $L$ is self-adjoint is a corollary of Kato-Rellich's theorem (see \cite{kato2013perturbation}). We still repeat the proof for readers' convenience. The proof here needs an equivalent criterion for self-adjoint operators:
\begin{lemma}\label{lmm:selfadjointcri}
	Suppose that $H$ is a complex Hilbert space with inner product $\langle\cdot,\cdot\rangle_H$ and $A:H\to H$ is a symmetry operator on $H$. Then $A$ is self-adjoint if and only if 
	\begin{align}
		\mathrm{Ran}(A\pm i)=H.
	\end{align}
\end{lemma}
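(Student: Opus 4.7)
The plan is to prove both directions using the basic Hilbert space identity $\|(A \pm i)x\|^2 = \|Ax\|^2 + \|x\|^2$ valid for any symmetric $A$ and $x \in D(A)$, together with the duality identification $\mathrm{Ran}(T)^\perp = \mathrm{Ker}(T^*)$.

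For the forward direction, assume $A = A^*$. First I would verify the norm identity above by expanding $\langle (A \pm i)x, (A \pm i)x\rangle_H$ and using symmetry to kill the cross terms $\pm i\langle x, Ax\rangle_H \mp i\langle Ax, x\rangle_H = 0$. This gives $\|(A \pm i)x\|_H \geq \|x\|_H$, so $A \pm i$ is injective with closed range (a Cauchy sequence in the range pulls back to a Cauchy sequence in $D(A)$, and one uses the closedness of $A$, which follows from self-adjointness). To show the range is all of $H$, it suffices to show it is dense. Suppose $y \perp \mathrm{Ran}(A + i)$; then for every $x \in D(A)$, $\langle y, Ax\rangle_H = i\langle y, x\rangle_H$, which exhibits $y \in D(A^*)$ with $A^* y = iy$. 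Since $A^* = A$, this gives $Ay = iy$, whence $\langle Ay, y\rangle_H = i\|y\|_H^2$; but symmetry forces $\langle Ay, y\rangle_H$ to be real, so $y = 0$. The same argument handles $A - i$.

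For the reverse direction, assume $A$ is symmetric with $\mathrm{Ran}(A \pm i) = H$. Since $A$ is symmetric, $A \subseteq A^*$, so I only need to check $D(A^*) \subseteq D(A)$. Given $y \in D(A^*)$, the hypothesis lets me pick $x \in D(A)$ with $(A - i)x = (A^* - i)y$, using that $A \subseteq A^*$ on the left. Then $(A^* - i)(y - x) = 0$, and testing against $(A + i)z$ for $z \in D(A)$ via the adjoint relation shows $y - x \perp \mathrm{Ran}(A + i) = H$, hence $y = x \in D(A)$.

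I do not anticipate a serious obstacle: this is a classical criterion, and the only care required is in tracking conjugate linearity of $\langle \cdot, \cdot\rangle_H$ in the second slot when identifying $A^* y = iy$ from orthogonality relations, and in the reverse direction in correctly invoking $A \subseteq A^*$ so that $(A - i)x = (A^* - i)x$ for $x \in D(A)$. The closed-range portion of the forward direction is the only place where one must know that self-adjoint operators are closed, a fact I would cite directly rather than reprove.
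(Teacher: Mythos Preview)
Your proposal is correct and follows essentially the same route as the paper's proof: both directions hinge on the identity $\|(A\pm i)x\|^2 = \|Ax\|^2 + \|x\|^2$, the duality $\mathrm{Ran}(A\pm i)^\perp = \mathrm{Ker}(A^*\mp i)$, and in the reverse direction the solvability $(A\pm i)z = (A^*\pm i)y$ followed by the triviality of $\mathrm{Ker}(A^*\pm i)$. The only cosmetic differences are that the paper invokes the closed range theorem where you argue closedness by hand via Cauchy sequences and the closedness of $A$, and the paper writes the orthogonality step in the forward direction as $\mathrm{Ran}(A\pm i)^\perp = \mathrm{Ker}(A\mp i) = \{0\}$ directly from $A=A^*$ rather than computing $A^*y = iy$ explicitly and then invoking reality of $\langle Ay,y\rangle$.
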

\begin{proof}
	$\implies$: Suppose that $A$ is self-adjoint, we prove that $\mathrm{Ran}(A\pm i)=H$. To prove this, notice that for any $w\in H$, we have
	\begin{align*}
	\|(A\pm i)w\|^2=\|Aw\|^2+\|w\|^2\geq \|w\|^2,
	\end{align*}
	so by the closed image theorem, we know that $\mathrm{Ran}(A\pm i)$ is closed and $\mathrm{Ker}(A\pm i)=\{0\}$. Also notice that
	\begin{align*}
	\mathrm{Ran}(A\pm i)^{\perp}=\mathrm{Ker}(A\mp i)=\{0\},
	\end{align*}
	so $\mathrm{Ran}(A\pm i)$ are dense in $H$ by the Hahn-Banach theorem. Remember that they are also closed, so $\mathrm{Ran}(A\pm i)=H$.
	
	$\impliedby$: Suppose that $\mathrm{Ran}(A\pm i)=H$. We prove that $A$ is self-adjoint. Because $A$ is symmetry, so we only need to prove that $\mathrm{dom}(A^*)\subset\mathrm{dom}(A)$ since $\mathrm{dom}(A)\subset\mathrm{dom}(A^*)$ holds for any symmetry operator. Notice that
	\begin{align*}
	\mathrm{Ker}(A^*\mp i)=\mathrm{Ran}(A\pm i)^{\perp}=\{0\},
	\end{align*}
	so $\mathrm{Ker}(A^*\mp i)=\{0\}$. Remember that $\mathrm{Ran}(A\pm i)=H$, so for any $x\in\mathrm{dom}(A^*)$, there exists $z\in\mathrm{dom}(A)$ such that
	\begin{align*}
	(A^*\pm i)x=(A\pm i)z.
	\end{align*}
	So $(A^*\pm i)(x-z)=0$. Here we used that for $z\in\mathrm{dom}(A)$, $Az=A^*z$. Thus $x-z\in\mathrm{Ker}(A^*\pm i)=\{0\}$. So $x=z$ and $A=A^*$. So $A$ is self-adjoint. 
\end{proof}
\begin{lemma}\label{lmm:selfadjoint}
 Operator $L$ defined in \eqref{def:operatorL} and operator $\mathcal{L}$ defined in \eqref{def:convolutionL} are self-adjoint. 
\end{lemma}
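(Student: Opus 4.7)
The plan is to establish self-adjointness of $\mathcal{L}$ first via its Fourier multiplier representation, and then obtain self-adjointness of $L = \mathcal{L} + \gamma''(u)$ by treating the zeroth-order term as a bounded self-adjoint perturbation on $L^2(\Omega_d)$. Throughout I work on the complexified spaces so that Lemma \ref{lmm:selfadjointcri} applies.

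For $\mathcal{L}$ with domain $H^1(\Omega_d)$, I first record the key preliminary observation that the symbol $\sigma_{\mathcal{L}}$ is real-valued: assumption \eqref{D} yields $H(\b z) = H(-\b z)$, and the periodization formula \eqref{eq:kernel} transfers this to $K(\b w) = K(-\b w)$, so the Fourier transform of the even real kernel $K$ is real (and non-negative by \eqref{A}). Symmetry of $\mathcal{L}$ then follows directly from Plancherel: for $u,v \in H^1(\Omega_d)$,
\begin{equation*}
\langle \mathcal{L}u, v\rangle_{L^2(\Omega_d)} = \langle \sigma_{\mathcal{L}}\hat u, \hat v\rangle_{L^2(\Omega_d')} = \langle \hat u, \sigma_{\mathcal{L}}\hat v\rangle_{L^2(\Omega_d')} = \langle u, \mathcal{L}v\rangle_{L^2(\Omega_d)}.
\end{equation*}
For the range condition, given $f \in L^2(\Omega_d)$, I define $u$ on the Fourier side by $\hat u(\b\nu) := \hat f(\b\nu)/(\sigma_{\mathcal{L}}(\b\nu) \pm i)$; since $\sigma_{\mathcal{L}}$ is real and non-negative, $|\sigma_{\mathcal{L}}\pm i|^2 = \sigma_{\mathcal{L}}^2 + 1 \ge 1$, giving $\hat u \in L^2(\Omega_d')$. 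Using the lower bound $\sigma_{\mathcal{L}}(\b\nu) \ge c|\b\nu|$ from \eqref{A}, the ratio $|\b\nu|^2/(\sigma_{\mathcal{L}}^2+1)$ is uniformly bounded on $\Omega_d'$, so $|\b\nu|\hat u \in L^2(\Omega_d')$ and hence $u \in H^1(\Omega_d)$, with $(\mathcal{L}\pm i)u = f$ by construction. Thus $\mathrm{Ran}(\mathcal{L}\pm i) = L^2(\Omega_d)$, and Lemma \ref{lmm:selfadjointcri} yields self-adjointness.

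For $L = \mathcal{L} + V$ with $V(\b w) := \gamma''(u(\b w))$, I note that any layer solution satisfies $|u| \le 1$ (as used in Section \ref{sec:proofofdg}) and $\gamma \in C^\infty(\mathbb{R})$, so $V$ is a bounded real-valued function on $\Omega_d$. The multiplication operator $M_V: L^2(\Omega_d) \to L^2(\Omega_d)$ is therefore bounded and self-adjoint. Then $L$ is defined on $\mathrm{dom}(\mathcal{L}) = H^1(\Omega_d)$ as the sum of a self-adjoint operator and a bounded self-adjoint operator, and the Kato--Rellich theorem (with relative bound $0$) gives that $L$ is self-adjoint on $H^1(\Omega_d)$. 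If one prefers a self-contained argument via Lemma \ref{lmm:selfadjointcri}, symmetry of $L$ is inherited from $\mathcal{L}$ and $M_V$, while the range condition $\mathrm{Ran}(L \pm i\lambda) = L^2(\Omega_d)$ can be verified for $\lambda > \|V\|_{L^\infty}$ by factoring $L \pm i\lambda = (\mathcal{L} \pm i\lambda)\bigl(I + (\mathcal{L}\pm i\lambda)^{-1}M_V\bigr)$ and inverting the second factor via a Neumann series, which converges once $\|(\mathcal{L}\pm i\lambda)^{-1}\|\,\|V\|_{L^\infty} < 1$; self-adjointness at the usual parameters then follows from standard functional calculus.

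The argument is essentially routine; the only step requiring genuine attention is establishing that $\sigma_{\mathcal{L}}$ is real-valued, which rests on the symmetry assumption \eqref{D} surviving the periodization \eqref{eq:kernel}, and verifying that $|\b\nu|^2/(\sigma_{\mathcal{L}}^2+1)$ is uniformly bounded so that the Fourier-side inverse actually lands in $H^1(\Omega_d)$. No deeper obstacle is anticipated.
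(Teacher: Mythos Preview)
Your proposal is correct and follows essentially the same route as the paper: symmetry via the real Fourier symbol, self-adjointness of $\mathcal{L}$ by constructing $(\mathcal{L}\pm i)^{-1}$ on the Fourier side and checking it lands in $H^1(\Omega_d)$ using assumption \eqref{A}, and then self-adjointness of $L$ via Kato--Rellich with the bounded multiplication operator $\gamma''(u)$, carried out explicitly by factoring $L\pm i\lambda$ through $(\mathcal{L}\pm i\lambda)$ and inverting the remaining factor by a Neumann series for large $\lambda$. The only cosmetic differences are that the paper writes the factorization on the other side, namely $[B(\mathcal{L}\pm\mu i)^{-1}+I](\mathcal{L}\pm\mu i)$, and reads the reality of $\sigma_{\mathcal{L}}$ directly off the two-sided bound in assumption \eqref{A} rather than via the kernel symmetry \eqref{D}.
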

\begin{proof}
First of all, both $L$ and $\mathcal{L}$ are symmetric by Assumption \ref{A}. For any $w,v\in H^1(\Omega_d)$, we have
\begin{align*}
\langle w, \mathcal{L}v \rangle_{L^2(\Omega_d)} &= \langle \hat{w}, \sigma_L(\b{\nu})\hat{v} \rangle_{L^2(\Omega_d')} = \langle \sigma_L(\b{\nu})\hat{w}, \hat{v} \rangle_{L^2(\Omega_d')} = \langle \mathcal{L}w, v \rangle_{L^2(\Omega_d)}\\
\langle w, Lv \rangle_{L^2(\Omega_d)} &= \langle w, \mathcal{L}v+\gamma''(u)v\rangle_{L^2(\Omega_d)}= \langle \mathcal{L}w+\gamma''(u)w, v\rangle_{L^2(\Omega_d)}= \langle Lw, v \rangle_{L^2(\Omega_d)}.
\end{align*} 
So they are all symmetric.

Then we prove that $\mathcal{L}$ in \eqref{def:convolutionL} is self-adjoint. By Lemma \ref{lmm:selfadjointcri}, we only need to prove that $\mathrm{Ran}(\mathcal{L}\pm i)=L^2(\Omega_d)$. We prove for only $\mathcal{L}+i$, the other side direction is just the same. 

To prove this, we only need to prove that for any $v\in L^2(\Omega_d)$, there exists $u\in H^1(\Omega_d)$ such that
\begin{align*}
	(\mathcal{L}+i)u=v.
\end{align*}
One can rewrite this equality on the Fourier side as
\begin{align*}
	(\sigma_L(\b{\nu})+i)\hat{u}(\b{\nu})=\hat{v}(\b{\nu}).
\end{align*}
Thus
\begin{align}\label{def:u}
	\hat{u}(\b{\nu})=\dfrac{\hat{v}(\b{\nu})}{\sigma_L(\b{\nu})+i}.
\end{align}
So we only need to prove that for any $v\in L^2(\Omega_d)$, $u$ in \eqref{def:u} is in $H^1(\Omega_d)$. This is true by Assumption \ref{A} which assumes that $\sigma(\b{\nu})$ is real and with same order as $|\b{\nu}|$:
\begin{align*}
	\|u\|_{H^1(\Omega_d)}^2 &= \langle \hat{u}(\b{\nu}),\hat{u}(\b{\nu})\rangle_{L^2(\Omega_d')}+\langle |\b{\nu}|\hat{u}(\b{\nu}),|\b{\nu}|\hat{u}(\b{\nu})\rangle_{L^2(\Omega_d')}\\
	&=\left\langle \hat{v}(\b{\nu}),\dfrac{|\b{\nu}|^2+1}{\sigma_L^2(\b{\nu})+1}\hat{v}(\b{\nu})\right\rangle_{L^2(\Omega_d')}\\
	&\leq \dfrac{1}{c^2}\|v\|_{L^2(\Omega)}^2.
\end{align*}
Here $c>0$ is the constant in Assumption \ref{A}. So by Lemma \ref{lmm:selfadjointcri}, we know that $\mathcal{L}$ is self-adjoint.

Finally, we prove that $L$ in \eqref{def:operatorL} is self-adjoint. Denote $A=\mathcal{L}$ and $B=\gamma''(u)$ who is understood as a multiplier, then $L=A+B$. First, because $A$ is self-adjoint, so by Lemma \ref{lmm:selfadjointcri}, $\mathrm{Ran}(A\pm\mu i)=L^2(\Omega_d)$ for any real number $\mu>0$. 

Moreover, there also exists $\mu>0$ such that $\mathrm{Ran}(A+B\pm\mu i)=L^2(\Omega_d)$. To prove this,
notice that for any $y\in H^1(\Omega_d)$, we have
\begin{align}
	\|(A\pm\mu i)y\|^2=\|Ay\|^2+\mu^2\|y\|^2.
\end{align}
Then take $y=(A\pm \mu i)^{-1}x$ for any $x\in L^2(\Omega_d)$, we have
\begin{align*}
	\|A(A\pm\mu i)^{-1}x\|^2&= \|(A\pm\mu i)(A\pm\mu i)^{-1}x\|^2-\mu^2\|(A\pm\mu i)^{-1}x\|^2\leq \|x\|^2,\\
	\mu^2\|(A\pm\mu i)^{-1}x\|^2&= \|(A\pm\mu i)(A\pm\mu i)^{-1}x\|^2-\|A(A\pm\mu i)^{-1}x\|^2\leq \|x\|^2.	
\end{align*}
so $\|A(A\pm i)^{-1}\|\leq 1$ and $\|(A\pm\mu i)^{-1}\|\leq\dfrac{1}{\mu}$. Notice that for sufficiently large $\mu$, we have $\|B(A\pm\mu i)^{-1}\|< 1$ since $B$ is a bounded linear operator and 
\begin{align*}
	\|B(A\pm\mu i)^{-1}x\|\leq {b}\|(A\pm\mu i)^{-1}x\|\leq\dfrac{b}{\mu}\|x\|.
\end{align*}
So by choosing sufficiently large $\mu$, we have $\|B(A\pm\mu i)^{-1}\|< 1$. This gives that $B(A\pm\mu i)^{-1}+I$ are invertible. Notice that 
\begin{align*}
A+B\pm\mu i=[B(A\pm\mu i)^{-1}+I](A\pm\mu i)
\end{align*}
so $\mathrm{Ran}(A+B\pm\mu i)=L^2(\Omega_d)$ since $\mathrm{Ran}(A\pm\mu i)=L^2(\Omega_d)$ and $B(A\pm\mu i)^{-1}+I$ is invertible. Then by Lemma \ref{lmm:selfadjointcri}, $L=A+B$ is self-adjoint. 
\end{proof}
Now we prove Lemma \ref{lmm:spct}.
\spct*
\begin{proof}
	Notice that $L$ is self-adjoint, so $\sigma(L)=\sigma_p(L)\cup\sigma_c(L)$. Because $\lim\limits_{x\to\pm\infty}u(x,\b{y})=\pm 1$ holds uniformly in $\b{y}$ and $\gamma''(\pm 1)>0$, so there exists $\lambda_1>0$ such that $\gamma''(u(x,y))>-\lambda_1$ holds by continuity of $u$ and $\gamma$. Now we prove that for any $\lambda\in\mathbb{C}\setminus[-\lambda_1,+\infty)$, $\lambda I-L$ has a bounded inverse. This directly shows $\sigma(L)\subset[-\lambda_1,+\infty)$.
	
	First, $\mathrm{Ran}(\lambda I-L)$ is closed. Let $\lambda=a+bi$. For any $w\in H^1(\Omega_d)$, if $b\neq 0$, we have
	\begin{align*}
		\|(\lambda I-L)w\|^2=(a^2+b^2)\|w\|^2+\|Lw\|^2-2a\langle w,Lw\rangle\geq b^2\|w\|^2.
	\end{align*}
	If $b=0$ but $a<-\lambda_1$, we have
	\begin{align*}
	\|(\lambda I-L)w\|^2=(a+\lambda_1)^2\|w\|^2+\|(L+\lambda_1)w\|^2-2(a+\lambda_1)\langle w,(L+\lambda_1)w\rangle\geq (a+\lambda_1)^2\|w\|^2.
	\end{align*}
	This is because
	\begin{align*}
		\langle w,(L+\lambda_1)w\rangle\geq 0
	\end{align*} 
	since $\gamma''(u)>-\lambda_1$ and $\mathcal{L}$ is positively semi-definite. Thus for any $\lambda\in\mathbb{C}\setminus[-\lambda_1,+\infty)$, there exists $c>0$ such that $\|(\lambda I-L)w\|\geq c\|w\|$ for any $w\in H^1(\Omega_d)$.
	
	Therefore, by the closed image theorem, $\mathrm{Ran}(\lambda I-L)$ is closed and $\mathrm{Ker}(\lambda I-L)=\{0\}$. So $\lambda I-L$ is injective. Moreover, we have
	\begin{align*}
		\mathrm{Ran}(\lambda I-L)^{\perp}=\mathrm{Ker}(\lambda^* I-L)=\{0\}
	\end{align*} 
	since $\lambda^*$ also belongs to $\mathbb{C}\setminus[-\lambda_1,+\infty)$. So by the Hahn-Banach theorem, $\overline{\mathrm{Ran}(\lambda I-L)}=L^2(\Omega_d)$. Remember that $\mathrm{Ran}(\lambda I-L)$ is closed, so $\mathrm{Ran}(\lambda I-L)=L^2(\Omega_d)$. Thus $\lambda I-L$ is a bijection. Because $L$ is self-adjoint, so $\lambda I-L$ is closed, so is $(\lambda I-L)^{-1}$. Thus by the closed graph theorem, $(\lambda I-L)^{-1}$ is bounded. Therefore, $\lambda$ is not in the spectrum of $L$.
\end{proof}

Finally, we prove Lemma \ref{lmm:contspct}. To prove this lemma, we need to employ Weyl's theorem on perturbation of self-adjoint operators. 
\begin{lemma}
	(Weyl's theorem \cite{kato2013perturbation}) Suppose that $H$ is a Hilbert space, $A$ is a self-adjoint operator on $H$ and $B$ is a symmetric operator on $H$. Then if $B$ is relatively compact with respect to $A$, then $\sigma_{ess}(A+B)=\sigma_{ess}(A)$.  
\end{lemma}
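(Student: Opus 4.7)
The plan is to argue by contradiction: assume there exist $\lambda<0$ and nonzero $g\in H^1(\Omega_d)$ with $Lg=\lambda g$, and derive a contradiction by exploiting the strong maximal principle (Lemma \ref{lmm:max}) together with the positivity of $u_x$ already established in Theorem \ref{thm:existence}. First I would bootstrap regularity of $g$ in exactly the same way as Lemma \ref{lmm:kerneldecay}, using that $\gamma''(u)$ is smooth and bounded with bounded derivatives of all orders (since $u$ is smooth and bounded by Lemma \ref{lmm:regularity}): from $\mathcal{L}g=(\lambda-\gamma''(u))g$ and the elliptic estimate in Lemma \ref{lmm:propertyL3}, iterate to conclude $g\in H^n(\Omega_d)$ for every $n$, and in particular $g$ is continuous with $\lim_{|x|\to\infty}g(x,\b y)=0$ uniformly in $\b y$.

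The next key step is to pass from $g$ to $|g|$. Because the kernel $K$ is nonnegative, for every $\b w\in\Omega_d$ one has $|g(\b w')|\ge \mathrm{sgn}(g(\b w))\,g(\b w')$ pointwise, so
\begin{equation*}
\mathcal{L}|g|(\b w)\le \mathrm{sgn}(g(\b w))\,\mathcal{L}g(\b w),
\end{equation*}
and adding the local term $\gamma''(u)|g|=\mathrm{sgn}(g)\gamma''(u)g$ yields $L|g|\le \mathrm{sgn}(g)\,Lg=\lambda|g|\le 0$. This inequality is the engine of the whole argument: $|g|$ is a nonnegative subsolution of $L\phi=\lambda\phi$, hence actually of $L\phi\le0$.

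I then combine $|g|$ with the strictly positive zero eigenfunction $u_x$ by considering the family $\phi_\beta=u_x+\beta|g|$ for $\beta<0$ and the sliding set
\begin{equation*}
D:=\{\beta<0:\phi_\beta(\b\xi)<0\text{ for some }\b\xi\in\Omega_d\}.
\end{equation*}
Choosing a point where $|g|>0$ shows $D$ is nonempty, so $\overline\beta:=\sup D$ is a well-defined real number in $[-\,2u_x/|g|,0]$ at that point. For each $\beta\in D$, uniform decay of both $u_x$ and $|g|$ at infinity forces the negative infimum of $\phi_\beta$ to be attained at an interior point $\b\xi_\beta=(x_\beta,\b y_\beta)$. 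At $\b\xi_\beta$ the maximal principle (Lemma \ref{lmm:max}) gives $\mathcal{L}\phi_\beta\le 0$, while $L\phi_\beta=Lu_x+\beta L|g|\ge \beta\lambda|g|\ge0$ since both $\beta$ and $\lambda$ are negative; subtracting yields $\gamma''(u(\b\xi_\beta))\phi_\beta(\b\xi_\beta)\ge 0$. Since $\phi_\beta(\b\xi_\beta)<0$, this forces $\gamma''(u(\b\xi_\beta))\le 0$, which confines $x_\beta$ to a fixed compact interval by the uniform far-field limits $u\to\pm1$ and $\gamma''(\pm1)>0$; periodicity in $\b y$ then makes $\{\b\xi_\beta\}$ precompact.

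Finally I pass to the limit along a subsequence $\beta_k\to\overline\beta$ with $\b\xi_{\beta_k}\to\b\xi^*$. Continuity gives $\phi_{\overline\beta}(\b\xi^*)\le 0$, while the definition of $\overline\beta$ gives $\phi_{\overline\beta}\ge 0$ everywhere, so $\phi_{\overline\beta}$ attains minimum $0$ at $\b\xi^*$. Repeating the above maximal-principle argument at $\b\xi^*$ now forces $\mathcal{L}\phi_{\overline\beta}(\b\xi^*)=0$, and the equality case of Lemma \ref{lmm:max} yields $\phi_{\overline\beta}\equiv 0$, i.e.\ $u_x=-\overline\beta|g|$. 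Plugging this back produces $0=Lu_x=-\overline\beta L|g|$, hence $L|g|=0$; combined with the sharpened inequality $L|g|\le\lambda|g|\le 0$ and $\lambda<0$, this forces $|g|\equiv0$, contradicting $g\ne 0$.

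The main obstacle I anticipate is the limit passage for $\b\xi_\beta$: proving that these minimizing points stay in a fixed compact set requires a clean quantitative use of the uniform far-field convergence of $u$ and of $\gamma''(\pm1)>0$, and the subtle sign-juggling when converting $L\phi_\beta\ge 0$ into control on $\gamma''(u)$ at the minimum. The rest is a fairly mechanical combination of Lemma \ref{lmm:max} with the sliding method applied to the pair $(u_x,|g|)$, mirroring the structure already used to prove Theorem \ref{thm:degiorgi}.
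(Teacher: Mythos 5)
There is a fundamental mismatch: the statement you were asked to prove is Weyl's theorem — an abstract perturbation result asserting that for a self-adjoint $A$ and a symmetric $B$ that is relatively compact with respect to $A$, the essential spectrum is unchanged, $\sigma_{ess}(A+B)=\sigma_{ess}(A)$ — but your proposal never engages with essential spectra, relative compactness, or the abstract operators $A$ and $B$ at all. What you have written is instead a proof (by sliding method and the maximal principle of Lemma \ref{lmm:max}, with $\phi_\beta=u_x+\beta|g|$ and the subsolution inequality $L|g|\le\lambda|g|$) that the concrete linearized operator $L=\mathcal{L}+\gamma''(u)$ has no negative point spectrum; that is the content of Lemma \ref{lmm:ptspct} in the paper, not of the stated lemma. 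Your argument, whatever its merits for that other statement, cannot establish Weyl's theorem: it says nothing about why adding a relatively compact perturbation cannot create or destroy essential spectrum, and it is tied to the specific structure (positive kernel, positive eigenfunction $u_x$, double-well potential) that the abstract statement does not assume.

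A proof of the actual statement would have to run along standard lines: characterize $\lambda\in\sigma_{ess}(A)$ by the existence of a normalized Weyl (singular) sequence $u_n\rightharpoonup 0$ with $\|(A-\lambda)u_n\|\to 0$, observe that such a sequence is bounded in the graph norm of $A$, use compactness of $B(A-z)^{-1}$ to conclude $\|Bu_n\|\to 0$, hence $\|(A+B-\lambda)u_n\|\to 0$ and $\lambda\in\sigma_{ess}(A+B)$; then show $B$ is also relatively compact with respect to $A+B$ (e.g.\ via a resolvent identity) to obtain the reverse inclusion. Note that the paper itself does not prove this lemma either — it is quoted from \cite{kato2013perturbation} and then applied, in the proof of Lemma \ref{lmm:contspct}, with $A=\mathcal{L}+f$ and $B=\gamma''(u)-f$; the verification of relative compactness there is the only part the paper actually carries out.
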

\contspct*
\begin{proof}
	Define function $f:\Omega_d\to\R$ as 
	\begin{align*}
	f(x,\b{y})=
	\begin{cases}
	\gamma''(1),\ &\mathrm{if}\ x>0,\\
	\gamma''(-1),\ &\mathrm{if}\ x\leq 0.
	\end{cases}
	\end{align*}
	Notice that $\gamma''(\pm 1)>0$ and $u(x,\b{y})\to\pm$ as $x\to\infty$ holds uniformly in $\b{y}$, so there exists $c>0$ such that $f>c$ and 
	\begin{align*}
		\lim\limits_{|x|\to\infty}\gamma''(u(x,\b{y}))-f(x,\b{y})= 0
	\end{align*}
	holds uniformly in $\b{y}$ direction. Now we rewrite operator $L$ as 
	\begin{align*}
		L=A+B,\ A=\mathcal{L}+f(x,y),\ B=\gamma''(u)-f(x,y).
	\end{align*}
	$B$ is understood as a multiplier. We will prove that $B$ is relatively compact with respect to $A$. Suppose that $\{u_j\} \subset L^2(\Omega_d)$ is bounded. We only need to prove that $\{B(A+i)^{-1}u_j\}_j$ is compact in $L^2(\Omega_d)$. 
	
	Denote $w_j=(A+i)^{-1}u_j$. We only need to prove that for any $\epsilon>0$, there exists a subsequence of $\{w_j\}$ such that $\|Bw_{j,n}-Bw_{j,m}\|\leq \epsilon$. First of all, because $w_j=(A+i)^{-1}u_j$, thus by Lemma \ref{lmm:propertyofL2}, we have
	\begin{align*}
		|\langle w_j,u_j\rangle| &= |\langle w_j, (A+i)w_j\rangle|\\
		&\geq |\langle w,\mathcal{L}w\rangle+\langle w_j, f(x,\b{y})w_j\rangle +i\|w_j\|^2|\\
		&\geq c\|w_j\|^2_{H^{1/2}(\Omega_d)}.
	\end{align*}
	Here $c>0$ is a constant that only depends on $\mathcal{L}$. Then by the Cauchy-Schwartz inequality, we know that 
	\begin{align*}
		\dfrac{1}{2c}\|u_j\|^2+\dfrac{c}{2}\|w_j\|^2&\geq |\langle w_j,u_j\rangle|\geq c\|w_j\|^2_{H^{1/2}(\Omega_d)}.
	\end{align*}
	Thus there exists $c'>0$ such that $\|w_j\|^2_{H^{1/2}(\Omega_d)}\leq c'\|u_j\|^2$, thus $\{w_j\}$ is bounded in $H^{1/2}(\Omega_d)$. Moreover, for any $\epsilon_1$ sufficiently small, there exists $R>0$ such that  
	\begin{align*}
		|\gamma''(u(x,\b{y}))-f(x,\b{y})|\leq \epsilon_1
	\end{align*}
	for $(x,\b{y})\in [-R,R]^c\times\T^{d-1}$. Therefore, 
	\begin{align*}
		\|Bw_j-Bw_k\|^2_{L^2([-R,R]^c\times\T^{d-1})}&=\int_{[-R, R]^c\times \T^{d-1}}|Bw_j-Bw_k|^2\mathrm{d}x\mathrm{d}\b{y}\\
		&\leq \epsilon_1^2\|w_j-w_k\|^2_{L^2(\Omega_d)}<\dfrac{\epsilon}{2}
	\end{align*} 
	by selecting $\epsilon_1$ sufficiently small. Then by compact embedding of $H^{1/2}([-R,R]\times\T^{d-1})\subset L^2([-R,R]\times\T^{d-1})$ and boundedness of $B$, we know that there exists a subsequent of ${w_j}$ (still denoted as $w_j$) such that $\|Bw_j-Bw_k\|^2_{L^2([-R,R]\times\T^{d-1})}\leq\dfrac{\epsilon}{2}$. Then for this subsequence, we have 
	\begin{align*}
		\|Bw_j-Bw_k\|^2_{L^2(\Omega_d)}=\|Bw_j-Bw_k\|^2_{L^2([-R,R]\times\T^{d-1})}+\|Bw_j-Bw_k\|^2_{L^2([-R,R]^c\times\T^{d-1})}\leq{\epsilon}.
	\end{align*}
	This proves that $\sigma_{ess}(L)=\sigma_{ess}(A)$. However, since $f(x,\b{y})>c>0$ is uniformly bounded from below, so $A=\mathcal{L}+f(x,\b{y})$ is positively definite and $\sigma(A)\subset[c,+\infty)$. Thus $\sigma_c(L)\subset\sigma_{ess}(L)\subset[c,+\infty)$. Taking $\lambda_2=c$ closes the proof. 
\end{proof}
\section{Proof of lemmas}\label{sec:proof}
\begin{proof}[Proof of Lemma \ref{Lem2.2}]
	From Definition \ref{minimizer} of minimizers, we calculate the variation of energy in terms of a perturbation with compact support in an arbitrary ball $B(R)\subset\R^3$ which is centered at $\b{0}$ with radius $R$. For any $ \b v\in C^\infty(B(R)\backslash \Gamma)$ such that $ \b v$ has compact support in $B(R)$ and satisfies \eqref{bcphi}, we consider the perturbation $\delta \b v$ where $\delta$ is a small real number. We denote  $\varepsilon:=\varepsilon(\b u)$, $\sigma:=\sigma(\b u)$ and $\varepsilon_1:=\varepsilon(\b v)$, $\sigma_1:=\sigma(\b v)$.
	%Note that $v$ has  the property
	%\begin{equation}\label{BCv}
	%v_1^+(x, 0^+)=-v_1^-(x, 0^-), \ \
	%v_2^+(x, 0^+)=v_2^-(x, 0^-).
	%\end{equation}
	Then we have that
	\begin{equation}\label{tem2.8}
	\begin{aligned}
	~&\lim_{\delta\to 0}\frac{1}{\delta} (E(\b u+\delta \b v)-E(\b u))\\
	=& \int_{B(R)\backslash \Gamma}\frac{1}{2}(\sigma_1:\varepsilon+ \sigma:\varepsilon_1)\ud x \ud y \ud z  +\int_{B(R)\cap\Gamma} \pt_{u_1}\gamma(u_1^+, u_3^+)v_1^++ \pt_{u_3} \gamma(u_1^+, u_3^+) v_3^+ \ud x\ud z\\
	=&\int_{B(R)\backslash \Gamma}\sigma:\varepsilon_1\ud x \ud y  \ud z +\int_{B(R)\cap\Gamma} \pt_{u_1}\gamma(u_1^+, u_3^+)v_1^++ \pt_{u_3} \gamma(u_1^+, u_3^+) v_3^+ \ud x\ud z\\
	=&\int_{B(R)\backslash \Gamma}\sigma:\nabla\b v\ud x \ud y \ud z  +\int_{B(R)\cap\Gamma} \pt_{u_1}\gamma(u_1^+, u_3^+)v_1^++ \pt_{u_3} \gamma(u_1^+, u_3^+) v_3^+ \ud x\ud z\\
	=&-\int_{B(R)\backslash \Gamma}\partial_j\sigma_{ij} v_i\ud x \ud y \ud z +\int_{B(R)\cap \{y=0^+\}}\sigma_{ij}^+ n_j^+ v_i^+ \ud x \ud z \\
	&\quad + \int_{B(R)\cap\{y=0^-\}}\sigma_{ij}^- n_j^- v_i^- \ud x \ud z
	+\int_{B(R)\cap\Gamma} \pt_{u_1}\gamma(u_1^+, u_3^+)v_1^++ \pt_{u_3} \gamma(u_1^+, u_3^+) v_3^+ \ud x\ud z\geq 0\\
	\end{aligned}
	\end{equation}
	where we used the property that $\sigma$  and $\nabla \cdot \sigma$ are locally integrable in $\{y>0\}\cup\{y<0\}$ when carrying out the integration by parts, and
	the outer normal vector of the boundary $\Gamma$ is
	$\mathbf n^+$ (resp. the $\mathbf n^-$) for the upper  {half-plane} (resp. lower half-plane). Similarly, taking perturbation as $-\b v$ and notice that that $\mathbf n^+=(0,-1,0)$ and $\mathbf n^-=(0,1,0)$, we have
	\begin{equation}
	\begin{aligned}
	&\int_{\{y=0^+\}}\sigma_{ij}^+ n_j^+ v_i^+ \ud x \ud z + \int_{\{y=0^-\}}\sigma_{ij}^- n_j^- v_i^- \ud x \ud z \\
	=& \int_{\{y=0^+\}}-\sigma_{22}^+  v_2^+ \ud x \ud z+ \int_{\{y=0^-\}}\sigma_{22}^-  v_2^- \ud x \ud z+ \int_{\{y=0^+\}}-\sigma_{12}^+  v_1^+ \ud x \ud z+ \int_{\{y=0^-\}}\sigma_{12}^-  v_1^- \ud x \ud z\\
	&+  \int_{\{y=0^+\}}-\sigma_{32}^+  v_3^+ \ud x \ud z+ \int_{\{y=0^-\}}\sigma_{32}^-  v_3^- \ud x \ud z
	\end{aligned}
	\end{equation}
	Since  $v_1^+(x,z)=-v_1^-(x,z)$, $v_3^+(x,z)=-v_3^-(x,z)$ and $v_2^+(x,z)=v_2^-(x,z)$. Hence due to the arbitrariness of $R$, we conclude that the minimizer $\b u$ must satisfy
	\begin{equation}\label{tem2.10}
	\begin{aligned}
	&\int_{\Gamma}\left[\sigma_{12}^+ + \sigma_{12}^- -\pt_{u_1}\gamma (u_1^+, u_3^+)\right] v_1^+ \ud x \ud z=0,\\
	&\int_{\Gamma}\left[\sigma_{32}^+ + \sigma_{32}^- -\pt_{u_3}\gamma (u_1^+, u_3^+)\right] v_3^+ \ud x \ud z=0,\\
	&\int_\Gamma \left(\sigma_{22}^+-\sigma_{22}^-\right) v_2^+ \ud x \ud z =0,\\
	&\int_{\mathbb{R}^3\backslash \Gamma} (\nabla\cdot \sigma) \cdot\b{v}~ \ud x\ud y \ud z =0
	\end{aligned}
	\end{equation}
	for any $\b v\in  C^\infty(B(R)\backslash\Gamma)$ and $ \b v$ has compact support in $B(R)$,
	%Notice also
	% the symmetry condition in space, we have
	% \begin{align}\label{sym}
	% &\sigma_{22}^++\sigma_{22}^-\\
	%= &(\lambda+2G)\partial_y u_2^+ +\lambda \partial_x u_1^+ + (\lambda+2G)\partial_y u_2^- +\lambda \partial_x u_1^-\nonumber\\
	%= &-(\lambda+2G)\partial_y u_2^- -\lambda \partial_x u_1^- + (\lambda+2G)\partial_y u_2^- +\lambda \partial_x u_1^-=0,\nonumber
	% \end{align}
	which leads to the Euler--Lagrange equation \eqref{eq:EL}. Here we have written the equation $\nabla\cdot \sigma=0$ in $\mathbb{R}^3 \backslash \Gamma$ as the first equation of \eqref{eq:EL} in terms of the displacement $\b u$.
\end{proof}

\begin{proof}[Proof of Lemma \ref{lmm:rearrangeinequality}]
	If $a_1=a_2$ or $b_1=b_2$ holds, then the equality holds. So we will focus on cases where $a_1\neq a_2$ and $b_1\neq b_2$. By enumeration of all possible orders, we have:
	\begin{enumerate}[(i)]
		\item If $a_1>a_2$ and $b_1>b_2$, then $a=a_2, A=a_1, b=b_2$ and $B=b_1$. So 
		\begin{align*}
		ab+AB-a_1b_1-a_2b_2 = ab+AB-AB-ab=0.
		\end{align*}
		The equality in \eqref{ineq:rearrange} holds.
		\item If $a_1>a_2$ and $b_1<b_2$, then $a=a_2, A=a_1, b=b_1$ and $B=b_2$.
		So
		\begin{align*}
		ab+AB-a_1b_1-a_2b_2 = ab+AB-Ab-aB= (a-A)(b-B)> 0.
		\end{align*} 
		The '$>$' in \eqref{ineq:rearrange} holds.
		\item If $a_1<a_2$ and $b_1>b_2$, then $a=a_1, A=a_2, b=b_2$ and $B=b_1$.
		So
		\begin{align*}
		ab+AB-a_1b_1-a_2b_2 = ab+AB-aB-Ab= (a-A)(b-B)> 0.
		\end{align*} 
		The '$>$' in \eqref{ineq:rearrange} holds.
		\item If $a_1<a_2$ and $b_1<b_2$, then $a=a_1, A=a_2, b=b_1$ and $B=b_2$. So 
		\begin{align*}
		ab+AB-a_1b_1-a_2b_2 = ab+AB-ab-AB=0.
		\end{align*}
		The equality in \eqref{ineq:rearrange} holds.
	\end{enumerate} 
	Therefore, the inequality holds. The equality is attained if and only if $a_1=a_2$ or $b_1=b_2$ or the order is preserved, i.e. $a_1<a_2,b_1<b_2$ or $a_1>a_2,b_1>b_2$. These conditions are equivalent to the following clear inequality: $(a_1-a_2)(b_1-b_2)\geq 0$. This concludes the proof.
\end{proof}
\begin{proof}[Proof of Lemma \ref{lmm:translationinvariant}]
	In fact, by change of variables, we have
	\begin{align*}
	F(u(x+c_1,\b{y}+\b{c}_2)) &=\dfrac{1}{2}\int_{\Omega_d}\int_{\Omega_d}{|(u(x+c_1,\b{y}+\b{c}_2)-u(x'+c_1,\b{y}'+\b{c}_2)|^2}K(x-x',\b{y}-\b{y}')\\
	&-{|(\eta(x,\b{y})-\eta(x',\b{y}')|^2}K(x-x',\b{y}-\b{y}')\mathrm{d}x\mathrm{d}x'\mathrm{d}\b{y}\mathrm{d}\b{y}'\\
	&+\int_{\Omega_d}\gamma(u(x+c_1,\b{y}+\b{c}_2))\mathrm{d}x\mathrm{d}\b{y}\\
	&=\dfrac{1}{2}\int_{\Omega_d}\int_{\Omega_d}{|(u(x,\b{y})-u(x',\b{y}')|^2}K(x-x',\b{y}-\b{y}')\\
	&-{|(\eta(x-c_1,\b{y}-\b{c}_2)-\eta(x'-c_1,\b{y}'-\b{c}_2)|^2}K(x-x',\b{y}-\b{y}')\mathrm{d}x\mathrm{d}x'\mathrm{d}\b{y}\mathrm{d}\b{y}'\\
	&+\int_{\Omega_d}\gamma(u(x,\b{y}))\mathrm{d}x\mathrm{d}\b{y}.\\
	\end{align*}
 Thus by Lemma \ref{lmm:propertyofL2}, we have
	\begin{align*}
	F(u(x,y))-F(u(x+c_1,\b{y}+\b{c}_2)) &=\dfrac{1}{2}\int_{\Omega}\int_{\Omega}{|\eta(x+c_1,\b{y}+\b{c}_2)-\eta(x'+c_1,\b{y}'+\b{c}_2)|^2}K(x-x',\b{y}-\b{y}')\\
	&-{|(\eta(x,\b{y})-\eta(x',\b{y}')|^2}K(x-x',\b{y}-\b{y}')\mathrm{d}x\mathrm{d}x'\mathrm{d}\b{y}\mathrm{d}\b{y}'\\
	&=\dfrac{A}{2}\int_{\R}\dfrac{(\eta(x+c_1)-\eta(x'+c_1))^2}{(x-x')^2}- \dfrac{(\eta(x)-\eta(x'))^2}{(x-x')^2}\mathrm{d}x\mathrm{d}x'.
	\end{align*}
	Here $A$ is the constant in Lemma \ref{lmm:propertyofL2}.	So we only need to prove that 
	\begin{align*}
	\int_{\R}\int_{\R}\dfrac{(\eta(x+c)-\eta(x'+c))^2}{(x-x')^2}- \dfrac{(\eta(x)-\eta(x'))^2}{(x-x')^2}\mathrm{d}x'\mathrm{d}x=0
	\end{align*}
	for any $c\in\R$. Without loss of generality, we assume that $c>0$. Then for $x\geq 1$, we know that $\eta(x)=\eta(x+c)=1$ and for $x\leq -1-c$, we have $\eta(x)=\eta(x+c)=-1$. Denote $J=[-1-c,1]$, and we separate the integral into 3 different parts, i.e. integral on $J\times J$ (denoted as $I_1$), $J\times J^c$ (denoted as $I_2$) and $J^c\times J^c$ (denoted as $I_3$). Since $\eta(x)=\eta(x+c)$ on $J^c$, we know that
	\begin{align*}
	I_3 = \int_{J^c}\int_{J^c}\dfrac{(\eta(x+c)-\eta(x'+c))^2}{(x-x')^2}- \dfrac{(\eta(x)-\eta(x'))^2}{(x-x')^2}\mathrm{d}x'\mathrm{d}x=0.
	\end{align*}
	On $J\times J$, we have
	\begin{align*}
	I_1 &= \int_{-1-c}^1\int_{-1-c}^1\dfrac{(\eta(x+c)-\eta(x'+c))^2}{(x-x')^2}- \dfrac{(\eta(x)-\eta(x'))^2}{(x-x')^2}\mathrm{d}x'\mathrm{d}x\\
	&= \int_{-1}^{1+c}\int_{-1}^{1+c}\dfrac{(\eta(x)-\eta(x'))^2}{(x-x')^2}\mathrm{d}x'\mathrm{d}x-\int_{-1-c}^{1}\int_{-1-c}^{1}\dfrac{(\eta(x)-\eta(x'))^2}{(x-x')^2}\mathrm{d}x'\mathrm{d}x\\
	&=\int_1^{1+c}\int_1^{1+c}\dfrac{(\eta(x)-\eta(x'))^2}{(x-x')^2}\mathrm{d}x'\mathrm{d}x+2\int_{-1}^{1}\int_1^{1+c}\dfrac{(\eta(x)-\eta(x'))^2}{(x-x')^2}\mathrm{d}x'\mathrm{d}x\\
	&-\int_{-1-c}^{-1}\int_{-1-c}^{-1}\dfrac{(\eta(x)-\eta(x'))^2}{(x-x')^2}\mathrm{d}x'\mathrm{d}x-2\int_{-1}^{1}\int_{-1-c}^{-1}\dfrac{(\eta(x)-\eta(x'))^2}{(x-x')^2}\mathrm{d}x'\mathrm{d}x.
	\end{align*}
	Because $\eta(x)=\eta(x')$ if $x, x'\geq 1$ or $x, x'\leq -1$, so integral vanishes on $[1,1+c]\times[1,1+c]$ or $[-1-c,-1]\times[-1-c,-1]$. Thus 
	\begin{align}\label{eq:I1}
	I_1 &= 2\int_{-1}^{1}\int_1^{1+c}\dfrac{(\eta(x)-\eta(x'))^2}{(x-x')^2}\mathrm{d}x'\mathrm{d}x-2\int_{-1}^{1}\int_{-1-c}^{-1}\dfrac{(\eta(x)-\eta(x'))^2}{(x-x')^2}\mathrm{d}x'\mathrm{d}x\nonumber\\
	&= 2\int_{-1}^{1}\int_1^{1+c}\dfrac{(\eta(x)-1)^2}{(x-x')^2}\mathrm{d}x'\mathrm{d}x-2\int_{-1}^{1}\int_{-1-c}^{-1}\dfrac{(\eta(x)+1)^2}{(x-x')^2}\mathrm{d}x'\mathrm{d}x\nonumber\\
	&= 2\int_{-1}^1\dfrac{(\eta(x)-1)^2}{x-1-c}-\dfrac{(\eta(x)-1)^2}{x-1}\mathrm{d}x-2\int_{-1}^1\dfrac{(\eta(x)+1)^2}{x+1}-\dfrac{(\eta(x)+1)^2}{x+1+c}\mathrm{d}x.
	\end{align}
	On $J\times J^c$, we have 
	\begin{align*}
	I_2 &= \int_{-1-c}^1\int_{1}^{+\infty}\dfrac{(\eta(x+c)-\eta(x'+c))^2}{(x-x')^2}- \dfrac{(\eta(x)-\eta(x'))^2}{(x-x')^2}\mathrm{d}x'\mathrm{d}x\\
	&+ \int_{-1-c}^1\int_{-\infty}^{-1-c}\dfrac{(\eta(x+c)-\eta(x'+c))^2}{(x-x')^2}- \dfrac{(\eta(x)-\eta(x'))^2}{(x-x')^2}\mathrm{d}x'\mathrm{d}x\\
	&= \int_{-1-c}^1\int_{1}^{+\infty}\dfrac{(\eta(x+c)-1)^2}{(x-x')^2}- \dfrac{(\eta(x)-1)^2}{(x-x')^2}\mathrm{d}x'\mathrm{d}x\\
	&+ \int_{-1-c}^1\int_{-\infty}^{-1-c}\dfrac{(\eta(x+c)+1)^2}{(x-x')^2}- \dfrac{(\eta(x)+1)^2}{(x-x')^2}\mathrm{d}x'\mathrm{d}x\\
	&= \int_{-1-c}^1\dfrac{(\eta(x+c)-1)^2-(\eta(x)-1)^2}{1-x}\mathrm{d}x+\int_{-1-c}^1\dfrac{(\eta(x+c)+1)^2-(\eta(x)+1)^2}{x+1+c}\mathrm{d}x
	\end{align*}
	Notice that $\eta(x+c)=1$ for $x\in[1-c,1]$ and $\eta(x)=-1$ for $x\in[-1-c,-1]$, so we have
	\begin{align*} 
	&\ \ \ \ \int_{-1-c}^1\dfrac{(\eta(x+c)-1)^2-(\eta(x)-1)^2}{1-x}\mathrm{d}x\\
	&= \int_{-1-c}^{1-c}\dfrac{(\eta(x+c)-1)^2}{1-x}\mathrm{d}x-\int_{-1}^1\dfrac{(\eta(x)-1)^2}{1-x}\mathrm{d}x-\int_{-1-c}^{-1}\dfrac{4}{1-x}\mathrm{d}x\\
	&= \int_{-1}^1\dfrac{(\eta(x)-1)^2}{1+c-x}-\dfrac{(\eta(x)-1)^2}{1-x}\mathrm{d}x+4\ln 2-4\ln(2+c)
	\end{align*}
	and
	\begin{align*}
	\int_{-1-c}^1\dfrac{(\eta(x+c)+1)^2-(\eta(x)+1)^2}{x+1+c}\mathrm{d}x = \int_{-1}^1\dfrac{(\eta(x)+1)^2}{x+1}-\dfrac{(\eta(x)+1)^2}{x+1+c}\mathrm{d}x +4\ln(2+c)-4\ln 2.
	\end{align*}
	Then substituting these two formulas into $I_2$, we have 
	\begin{align}\label{eq:I2}
	I_2 &= \int_{-1-c}^1\dfrac{(\eta(x+c)-1)^2-(\eta(x)-1)^2}{1-x}\mathrm{d}x+\int_{-1-c}^1\dfrac{(\eta(x+c)+1)^2-(\eta(x)+1)^2}{x+1+c}\mathrm{d}x\nonumber\\
	&= \int_{-1}^1\dfrac{(\eta(x)-1)^2}{1+c-x}-\dfrac{(\eta(x)-1)^2}{1-x}\mathrm{d}x+4\ln 2-4\ln(2+c)\nonumber\\
	&+\int_{-1}^1\dfrac{(\eta(x)+1)^2}{x+1}-\dfrac{(\eta(x)+1)^2}{x+1+c}\mathrm{d}x +4\ln(2+c)-4\ln 2\nonumber\\
	&= \int_{-1}^1\dfrac{(\eta(x)-1)^2}{1+c-x}-\dfrac{(\eta(x)-1)^2}{1-x}\mathrm{d}x+\int_{-1}^1\dfrac{(\eta(x)+1)^2}{x+1}-\dfrac{(\eta(x)+1)^2}{x+1+c}\mathrm{d}x
	\end{align}
	A careful comparison of equation \eqref{eq:I1} and \eqref{eq:I2} shows that $I_1+2I_2=0$. Thus
	\begin{align*}
	\int_{\R}\int_{\R}\dfrac{(\eta(x+c)-\eta(x'+c))^2}{(x-x')^2}- \dfrac{(\eta(x)-\eta(x'))^2}{(x-x')^2}\mathrm{d}x'\mathrm{d}x=I_1+2I_2+I_3=0.
	\end{align*}
\end{proof}

\end{document}